\newcommand{\Vtwocolumns}{false}
\newcommand{\Vlongreport}{false} 
\newenvironment{proof}      
{\par\noindent\textbf{Proof}}{\eopp\smallskip\vskip 3 pt}            
\newcommand{\eopp}{\hspace*{\fill}{$\blacksquare$} } 
\newcommand{\comment}[1]{\null}  
\newcommand{\NotForJtwoC}[1]{\ifthenelse{\boolean{JournalTwoC}}{}{#1}}
\newcommand{\IfJtwoC}[2]{\ifthenelse{\boolean{JournalTwoC}}{#1}{#2}}
\newcommand{\NotForJournal}[1]{\ifthenelse{\boolean{JournalSub}}{}{#1}}
\newcommand{\IfJournal}[2]{\ifthenelse{\boolean{JournalSub}}{#1}{#2}}
\def\startmodif{\color{black}}
\def\stopmodif{\color{black}\normalcolor}
\newcommand{\He}{{\mbox{\rm He}}}
\begin{document}


\IfJtwoC{

\title{Interconnected Observers for Robust Decentralized Estimation with Performance Guarantees and Optimized Connectivity Graph}

\author{Yuchun Li and
        Ricardo G. Sanfelice~\IEEEmembership{}
\thanks{Y. Li and R. G. Sanfelice are with the Department
of Computer Engineering, University of California, Santa Cruz,
CA, 95064, USA. E-mail: {\tt\small yuchunli,ricardo@ucsc.edu}. 
} \vspace{-2.5ex}}

\maketitle
}{

\ititle{Technical Report: Interconnected Observers for Robust Decentralized Estimation with Performance Guarantees and Optimized Connectivity Graph}
\iauthor{
  Yuchun Li \\
  {\normalsize yuchunli@ucsc.edu} \\
  Ricardo G. Sanfelice \\
  {\normalsize ricardo@ucsc.edu}}
\idate{\today{}} 
\iyear{2015}
\irefnr{02}
\makeititle


\title{Interconnected Observers for Robust Decentralized Estimation with Performance Guarantees and Optimized Connectivity Graph}

\author{Yuchun Li and Ricardo G. Sanfelice
\thanks{Y. Li and R. G. Sanfelice are with the Department
of Computer Engineering, University of California, Santa Cruz,
CA, 95064, USA. E-mail: {\tt\small yuchunli,ricardo@ucsc.edu}.
This research has been partially supported by the National Science Foundation under CAREER Grant no. ECS-1150306 and by the Air Force Office of Scientific Research under Grant no. FA9550-12-1-0366. 
} }
\date{}
\maketitle

}


%

\begin{abstract}
Motivated by the need of
observers that are both robust to disturbances
and guarantee fast convergence to zero of the estimation error,
we propose an observer for linear time-invariant systems
with noisy output
that consists of the combination of
$N$ coupled observers over a connectivity graph.
At each
node of the graph, the output of these {\em interconnected observers}
is defined
as the average of the estimates obtained using
local information.
The convergence rate and the robustness to measurement
noise of the proposed observer's output are characterized
in terms of
$\cal KL$ bounds.
Several optimization problems are formulated
to design the proposed observer so as to satisfy
a given rate of convergence specification while minimizing
the $H_{\infty}$ gain from noise to estimates
or the size of the connectivity graph.
It is shown that that the interconnected observers
relax the well-known tradeoff between rate of convergence
and noise amplification, which is a property attributed to the
proposed innovation term that, over the graph, couples the estimates between
the individual observers.
Sufficient conditions involving information
of the plant only, assuring that
the estimate
obtained at each node of the graph
outperforms the one obtained with a single, standard
Luenberger observer are given.
The results are illustrated in
several examples throughout the paper.
\end{abstract}
\IfJtwoC{\vspace{-10pt}}{}
\section{Introduction}
\label{sec:introduction}
We consider linear time-invariant systems
of the form
\begin{flalign}
      \begin{split}
              \dot x & =Ax,\quad
               y = Cx+m(t),
      \end{split}\label{eq:plant}
\end{flalign}
where $x\in \mathbb{R}^n$, $y\in \mathbb{R}^p$, and $t \mapsto m(t)$ denotes measurement noise,
for which there exists a Luenberger observer
      \begin{flalign}
      \begin{split}
              {\dot{\hat x}}_L & =A{\hat x}_L - K_L({\hat y}_L -y), \quad
              {\hat y}_L = C{\hat x}_L
      \end{split}\label{eq:singleob}
      \end{flalign}
leading to the exponentially stable error system
\IfJtwoC{
      \begin{align}
      \!\!{\dot e}_L  \!=\! (A \!\!-\!\! K_L C) e_L \!\!+\!\! K_L m(t) \!:=\! \tilde{A}_L e_L \!+\! K_L m(t)
      \label{eq:singleob_error}
      \end{align}
}{
      \begin{flalign}
      \begin{split}
              {\dot e}_L  = (A - K_L C) e_L + K_L m(t) := \tilde{A}_L e_L + K_L m(t)
      \end{split}\label{eq:singleob_error}
      \end{flalign}}
with estimation error given by $e_L:= {\hat x}_L - x$.
It is well-known that,
under observability conditions of \eqref{eq:plant},
the matrix gain $K_L$
can be chosen to make the convergence rate of \eqref{eq:singleob_error}
arbitrarily fast. However, due to the fast convergence speed requiring a large gain,
the price to pay is that the effect of measurement noise $m$ is amplified.
Indeed, the design of observers, such as those in the form \eqref{eq:singleob},
involves a tradeoff between convergence rate and robustness to measurement noise \cite{Luenberger1971,72tradeoff}. 
In fact, in \cite[page 597]{Luenberger1971},
D. G. Luenberger makes the following remark about the error system \eqref{eq:singleob_error} when $(C,A)$ is observable: ``Theoretically, the eigenvalues can be moved arbitrarily toward minus infinity, yielding extremely rapid convergence. This tends, however, to make the observer act like a differentiator and thereby become highly sensitive to noise, and to introduce other difficulties.'' 
Along the same lines, the authors of \cite{72tradeoff} recognize the compromise between performance and robustness in the design of \eqref{eq:singleob}:
``At this point we can only offer some intuitive guidelines for a choice of $K$ to obtain satisfactory performance of the observer. To obtain fast convergence of the reconstruction error to zero, $K$ should be chosen so that the observer poles are quite deep in the left-half complex plane. This, however, generally must be achieved by making the gain matrix $K$ large, which in turn makes the observer very sensitive to any observation noise that may be present, added to the observed variable $y(t)$. A compromise must be found,'' see \cite[page 332]{72tradeoff}. Unfortunately, this issue is also at the core of every estimation algorithm for multi-agent systems.
\IfJtwoC{\vspace{-9pt}}{}
\subsection{Related work}
Several
observer architectures and design methods
with the goal of conferring good performance
and robustness to the error system have been proposed in the literature.
In particular, $H_{\infty}$ tools have been employed to formulate sets of Linear Matrix Inequalities (LMIs)
that, when feasible, guarantee that the
${\cal L}_2$ gain from disturbance
to the estimation error is below a pre-established upper bound;
see, e.g., \IfJtwoC{\cite{94.boyd.book.lmi,Li.Fu.97.TAC,Marquez2003},}{\cite{94.boyd.book.lmi,Li.Fu.97.TAC,06.Jung.adaptive.observer,Marquez2003},} to just list a few.
Following ideas from adaptive control \IfJtwoC{\cite{Ioannou.Sun.96},}{\cite{Egardt.79,Ioannou.Sun.96},}
observers with a gain that adapts to the plant measurements
have been proposed in  \cite{Astolfi.Praly.06, Andrieu2009},
though the presence of measurement noise can lead
to large values of the gains. 
\IfJtwoC{Such issues also emerge in the design of high-gain observers, where the
use of high gain can significantly amplify the effect of measurement noise, as in \cite{Khalil.09.switched-gain}. More recently, observers using adaptive gains \cite{05.lei.global.observer}, two gains designed with different objectives \cite{11.khalil.nonlinear.hign.gain.noise,11.Sanfelice.high.gain.adaption}, and switching between two observers \cite{12.Shim.Observer} have been found successful in certain settings.
}{
Such issues also emerge in the design of high-gain observers, where the use of high gain can significantly amplify the effect of measurement noise. Indeed, in \IfJtwoC{\cite{Khalil.09.switched-gain},}{\cite{06.khalil.highgain.noise,Khalil.09.switched-gain},} 
it is  shown that  measurement noise introduces an upper limit for the gain of a high-gain observer with constant gain when good performance is desired. More recently, observers using essentially two set of gains, one set optimized for convergence
and the other for robustness, have been found successful in certain settings.
Such approaches include the piecewise-linear gain approach in \cite{11.khalil.nonlinear.hign.gain.noise} for
simultaneously satisfying steady-state and transient bounds,
the high gain observer with nonlinear adaptive gain in \cite{11.Sanfelice.high.gain.adaption}, and the high gain observer with on-line gain tuning in \cite{05.lei.global.observer}. Also recently, a switching algorithm combining two observers for performance improvement was proposed in \cite{12.Shim.Observer}.}

Recent research efforts in multi-agent systems have lead to enlightening results in distributed estimation and consensus. Distributed Kalman filtering are employed for achieving spatially-distributed estimation tasks in \cite{09.Cortes.DistributedKalman} and for sensor networks in \cite{05.Olfati.DistributedKalmanConsensus, 05.spanos.distributedKalman, 06.Alriksson.DistributedKalman, 07.Olfati.DistriibutedKalman, 08.Carli.DistributedKalman}. 
To characterize the effect of unmodeled dynamics on the consensus multi-agent system, in \cite{12.Zhao.Consensus.Hinfity}, a region-based approach is used for distributed $H_\infty$-based consensus of multi-agent systems with an undirected graph. For dynamic average consensus, \cite{13.Cortes.Consensus} proposes a decentralized algorithm that guarantees asymptotic agreement of a signal over strongly connected and weight-balanced graphs. In \cite{08.Hong.distributedobservers}, switching inter-agent topologies are used to design distributed observers for a leader-follower problem in multi-agent systems. 
 For estimating the trajectory of a moving target with perturbed dynamics, nonlinear filters based on networked sensors are proposed in \cite{Hu20102041,Hu2014}. 
However, distributed estimation algorithms that systematically meet specifications of performance and robustness to measurement noise are not available.
\IfJtwoC{\vspace{-9pt}}{}
\subsection{Contributions}
We propose a new observer, called {\em interconnected observers}, with improved  convergence rate of the estimation error and robustness to measurement noise, when compared with the observer in \eqref{eq:singleob}. The proposed observer consists of $N$ linear time-invariant observers interconnected over a graph. The local estimate at each node is provided by an observer featuring an innovation term that appropriately injects the estimate obtained from its neighbors, which can be computed in a decentralized manner. The global estimate of the state of the plant is given by the average of the local estimates.

The main contributions of this paper are threefold. 
\begin{enumerate}
\item [1)] We establish that, under certain conditions involving its parameters,
and when compared to the Luenberger observer in \eqref{eq:singleob}, the proposed observer significantly improves the rate of convergence and the gain from measurement noise to estimation error, with improvements of more than  $50 \%$ at times (see Table~\ref{tab:global_numberofagent}).
\item [2)] We characterize the convergence rate and the robustness to measurement noise of the proposed observer in terms of $\cal KL$ bounds, which provide useful information on how the parameters of the observers affect such properties.
\item [3)] We formulate optimization problems for the purpose of the design of interconnected observers. 
\begin{enumerate}
\item [\romannumeral1)] For a fixed rate of convergence, optimization problems are proposed for the design of interconnected observers with optimized gain from measurement noise to estimation error (local and global). 
\item [\romannumeral2)] For a fixed rate of convergence and a desired $H_\infty$ gain, optimization problems that minimize the number of edges of the connectivity graph are also formulated. 
\IfJtwoC{}{Using appropriate coordinates and conditions, we show that these problems can be converted into convex optimization problems that can be used to efficiently design interconnected observers. }
\item [\romannumeral3)] An LMI condition involving only information about the plant is provided to guarantee that the estimate obtained at each node of the graph outperforms the one obtained with a single, standard Luenberger observer\IfJtwoC{.}{, which uniquely relaxes the well-known trade off between rate of convergence and noise amplification.}
\end{enumerate}   
\end{enumerate}
Examples throughout the paper illustrate our results and their applicability to estimation in multi-agent systems, such as mobile and sensor networks. To the best of our knowledge, we are not aware of an observer in the literature that guarantees such properties simultaneously.
\IfJtwoC{\vspace{-9pt}}{}
\subsection{Organization of the Paper}
The remainder of this paper is organized as follows.
In Section~\ref{sec:MotivationalExample}, the idea and benefits behind interconnected observers are presented in a motivational example.
Section~\ref{sec:coupledpairofLobserver} introduces the proposed observer in general form, the $\cal KL$ bounds, and the design methods in terms of optimization problems.  \NotForJtwoC{Section~\ref{sec:discussion} discusses the optimization
of the number of nodes in the graph, a decentralized method to compute a global estimate, and a relationship with the optimal observer.}
\IfJtwoC{\vspace{-7pt}}{}
\section{Motivational Example}
\label{sec:MotivationalExample}
Consider the scalar plant 
     \begin{flalign}
     \begin{split}
             \dot x &=ax, \quad 
             y = x+m,
     \end{split}\label{eq:scalarplant}
     \end{flalign} 
where $m$ denotes measurement noise and $a<0$.
Suppose we want to estimate the state $x$ from measurements of $y$. Following \eqref{eq:singleob}, a Luenberger observer for \eqref{eq:scalarplant} is {given by}
     \begin{flalign}
     \begin{split}
             {\dot{\hat x}}_L &=a{\hat x}_L - {K}_L ({\hat y}_L - y), \quad 
             {\hat y}_L = {\hat x}_L.
     \end{split}\label{eg:scalarplantob}
     \end{flalign}
The resulting estimation error system is given by \eqref{eq:singleob_error}
with $\tilde{A}_L = a - K_L$.
Its rate of convergence  is $a-K_L$ and, when $m$ is constant, its  steady-state error is $e_L^\star : = \frac{K_L}{K_L-a}m$. It is apparent that to get fast convergence rate, the constant $K_L$ needs to be positive and  large.
However, as argued in the introduction, with $K_L$ large, the effect of measurement noise is amplified. In light of recent popularity of multi-agent systems, it is natural to explore the advantages of using more than one measurement of the plant's output so as to overcome such a tradeoff.   
            
In this paper, we show that it is possible to design interconnected observers that are capable of relaxing the said tradeoff. 
\IfJtwoC{}{More precisely, interconnected observers are proposed to improve the rate of convergence and the robustness to measurement noise, when compared to a  single Luenberger observer.}
To illustrate the idea behind the proposed observer, consider the estimation  of the state of the scalar plant \eqref{eq:scalarplant} with two agents, each taking its own measurement of $y$. The two agents can communicate with each other according to the following directed graph: agent $1$ can transmit information to agent $2$, but agent $2$ cannot send data to agent $1$. This is shown in Figure~\ref{fig:2nodes_to_neighbor_only}.
         \begin{figure}[!h]
         \centering
         {
         \psfragfig[trim=0.1cm 0.5cm 0.5cm 0.5cm,clip,width=0.18\textwidth]{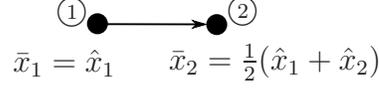}
         {
         \psfrag{[a]}[][][1]{\textcircled{\scriptsize$1$}}
         \psfrag{[b]}[][][1]{\hspace{-0.28in}\textcircled{\scriptsize$2$}}
         \psfrag{[c]}[][][1]{$\bar x_1 = \hat x_1$}
         \psfrag{[d]}[][][1]{\hspace{0.4in}$\bar x_2 = \frac{1}{2}(\hat x_1 + \hat x_2)$}
         }
         \caption{Two agents connected as a direct graph.}
         \label{fig:2nodes_to_neighbor_only}
         }
     \end{figure}
     
Following the approach in this paper, an interconnected observer would take the form
\IfJtwoC{
     \begin{flalign}
     \begin{split}
             \dot{\hat x}_1 &=a{\hat x}_1 - K_{11}({\hat y}_1 - y_1),\\
             \dot{\hat x}_2 &=a{\hat x}_2 - K_{22}({\hat y}_2 - y_2)- K_{21} ({\hat y}_1 - y_1),  \\
             {\hat y}_1 &= {\hat x}_1, \
             {\hat y}_2 = {\hat x}_2, \
             {\bar x}_1  = {\hat x}_1, \ {\bar x}_2 = (1/2)({\hat x}_1+{\hat x}_2),
     \end{split}\label{eq:coupled_obs}
     \end{flalign}
}{
     \begin{flalign}
     \begin{split}
             \dot{\hat x}_1 &=a{\hat x}_1 - K_{11}({\hat y}_1 - y_1),  \quad 
             \dot{\hat x}_2 =a{\hat x}_2 - K_{22}({\hat y}_2 - y_2)- K_{21} ({\hat y}_1 - y_1),  \\
             {\hat y}_1 &= {\hat x}_1, \quad
             {\hat y}_2 = {\hat x}_2, \quad
             {\bar x}_1  = {\hat x}_1, \quad {\bar x}_2 = \frac{{\hat x}_1+{\hat x}_2}{2},
     \end{split}\label{eq:coupled_obs}
     \end{flalign}}
where $\hat x_i$ and $\bar x_i$ are associated with agent $i$, each measured plant output $y_i$ is corrupted by measurement noise $m_i$, that is $y_1= x + m_1$ and  $y_2 = x + m_2$, respectively, where $m_i$'s are independent. The term ``$- K_{21} ({\hat y}_1 - y_1)$''
defines an innovation term exploiting the information shared by agent $1$ with agent $2$. 
The output $\bar{x}_i$ of agent $i$ defines the local 
estimate (at agent $i$) of $x$. Since agent $1$ only has access to its own information, we have $\bar x_1 = \hat{x}_1$, while since agent $2$ has also information from its neighbor, agent $2$'s output $\bar x_2$ can be taken as the average of the states $\hat{x}_1$ and $\hat{x}_2$.\footnote{In general, ${\bar x}_2$ could be the convex combination of $\hat x_1$ and $\hat x_2$, i.e., ${\bar x}_2 = s_1 \hat x_1 + s_2 \hat x_2, s_1 + s_2 = 1, s_1,s_2 \in \mathbb{R}$. }

To analyze the estimation error induced by the interconnected observer in \eqref{eq:coupled_obs}, define error variables $e_i := {\hat x}_i-x, i\in\{1,2\}$. Then, the error system is given by
\IfJtwoC{
     \begin{flalign}
     \begin{split}
             \dot{ e}_1 &=(a - K_{11}){e}_1 + K_{11} m_1,  \\
             \dot{ e}_2 &=- K_{21}{e}_1 \!+\! (a \!-\! K_{22}){e}_2 \!+\! K_{21} m_1 \!+\! K_{22} m_2,  \\
     \end{split}\label{eq:coupled_obs_error}
     \end{flalign}
}{   \begin{flalign}
     \begin{split}
             \dot{ e}_1 &=(a - K_{11}){e}_1 + K_{11} m_1,  \quad 
             \dot{ e}_2 =- K_{21}{e}_1 + (a-K_{22}){e}_2 + K_{21} m_1 + K_{22} m_2,  \\
     \end{split}\label{eq:coupled_obs_error}
     \end{flalign}}
which can be written in matrix form as 
     \begin{flalign}
     \begin{split}
             \dot{ e} &=\tilde{A}{e}+\tilde{K}m,
     \end{split}\label{eq:coupled_obs_error_matrix_oneway}
     \end{flalign}
where $e=[e_1\ e_2]^\top$, $m =[m_1\ m_2]^\top$,  
\IfJtwoC{
     \begin{flalign}
     \begin{split}
             \tilde{A}\!=\!\left[\!\!
             \begin{array}{cc}
             a-K_{11} &  0 \\
             -K_{21}  & a-K_{22}
             \end{array}
             \!\!\right],
             \tilde{K} \!=\! \left[\!\!
             \begin{array}{cc}
             K_{11} & 0  \\
             K_{21} & K_{22}
             \end{array}
             \!\!\right].
     \end{split}\label{eq:coupled_closed_A_matrix_oneway}
     \end{flalign}
}{
     \begin{flalign}
     \begin{split}
             \tilde{A}=\left[
             \begin{array}{cc}
             a-K_{11} &  0 \\
             -K_{21}  & a-K_{22}
             \end{array}
             \right],\
             \tilde{K}=\left[
             \begin{array}{cc}
             K_{11} & 0  \\
             K_{21} & K_{22}
             \end{array}
             \right].
     \end{split}\label{eq:coupled_closed_A_matrix_oneway}
     \end{flalign}}
Then, when $K_{11}, K_{21}$, and $K_{22}$ are chosen such that $\tilde{A}$ is Hurwitz and when $m$ is constant, the steady-state value of \eqref{eq:coupled_obs_error_matrix_oneway} is given by
\IfJtwoC{
     \begin{flalign}
     \begin{split}
             \!\!\!e_1^\star \!\!=\!\!\frac{K_{11}}{K_{11}\!-\!a}m_1,
             e_2^\star \!\!=\!\!\frac{-a K_{21}}{(K_{11}\!\!-\!\!a)(K_{22}\!\!-\!\!a)}m_1 \!\!+\!\! \frac{K_{22}}{K_{22}\!\!-\!\!a}m_2.
     \end{split}
     \end{flalign}
}{
     \begin{flalign}
     \begin{split}
             e_1^\star &=\frac{K_{11}}{K_{11}-a}m_1,\quad 
             e_2^\star =\frac{-a K_{21}}{(K_{11}-a)(K_{22}-a)}m_1 + \frac{K_{22}}{K_{22}-a}m_2.
     \end{split}
     \end{flalign} } 
Furthermore, the local estimation error resulting from each agent  is given by the quantity
$\bar{e}_i:=\bar{x}_i - x,\ i\in \{1,2\}$,
and has a steady-state value given by
\IfJtwoC{
     \begin{align*}
         \bar{e}_1^\star \!=\! e_1^\star, \ 
         \bar{e}_2^\star \!=\! \frac{K_{11}(K_{22} \!-\! a) \!-\! aK_{21}}{2(K_{11} \!-\! a)(K_{22} \!-\! a)}m_1 \!+\! \frac{K_{22}}{2(K_{22} \!-\! a)}m_2.
     \end{align*}
}{
     \begin{align*}
         \bar{e}_1^\star = e_1^\star, \quad
         \bar{e}_2^\star  = \frac{K_{11}(K_{22} \!-\! a) \!-\! aK_{21}}{2(K_{11} \!-\! a)(K_{22} \!-\! a)}m_1 \!+\! \frac{K_{22}}{2(K_{22} \!-\! a)}m_2.
     \end{align*} }
Let $K_{11} = K_{22} = K_L$. Because of the structure of $\tilde{A}$, it can be verified that the rate of convergence for the estimation error \eqref{eq:coupled_obs_error_matrix_oneway} is $a-K_L$, which is the same as that of the Luenberger observer \eqref{eg:scalarplantob}.  Moreover, 
assuming that constant noise $m_1$ and $m_2$ are equal, i.e., $m_1 = m_2 = m_0$, then 
\IfJtwoC{
$\bar{e}_2^\star  = \frac{2K_L(K_L-a) - aK_{21}}{2(K_L - a)^2}  m_0.$
}{
     \begin{align*}
     \begin{split}
           \bar{e}_2^\star  = \frac{2K_L(K_L-a) - aK_{21}}{2(K_L - a)^2}  m_0.
     \end{split}
     \end{align*} }
Interestingly, picking $K_{21} = \frac{2K_L(K_L-a)}{a} $, we obtain $\bar{e}_2^\star = 0$ for any unknown constant $m_0$, namely, the measurement noise can be completely rejected. When constant noise $m_1$ and $m_2$ are not equal, the choice  $K_{21} = \frac{K_L(K_L-a)}{a}$ leads to $\bar{e}_2^\star =  \frac{K_L}{2(K_L-a)}m_2$, 
 which is a significant improvement ($50\%$) over the case that agent $2$ only has access to its own measurement (in which case $\bar e_2^\star = \frac{K_L}{K_L - a}m_2$). These properties cannot be achieved by using the Luenberger observer in \eqref{eg:scalarplantob}. 

For general measurement noises $m_1$ and $m_2$ (not necessarily constant), the $H_\infty$ norm\footnote{By ``$H_\infty$ norm'' we mean the ${\cal L}_2$ gain from $m$ to $e$, which is the induced $2$-norm of the complex matrix transfer function from $m$ to $e$. } from noise to the estimation error can be employed to study the noise effect. \IfJtwoC{In fact, when $K_{21} \approx -4.75$, }{As shown in Figure~\ref{fig:hinfnorm_k21_local}, when $K_{21} \approx -4.75$,} the $H_\infty$ gain from noise $m$ to the local estimate $\bar{e}_2$ achieves a minimum equal to $0.45$, which is much smaller than that of the Luenberger observer in \eqref{eg:scalarplantob}, which is $0.8$, with equal rate of convergence ($K_L = 2,a=-0.5$).
\IfJtwoC{
}{
\begin{figure}[!h]
         \centering
         \subfigure[$H_\infty$ norm from noise $m$ to estimation error ${e}_L$ with respect to the parameter $K_L$.]{
         \label{fig:tradeoff}
         \psfragfig[width=0.45\textwidth]{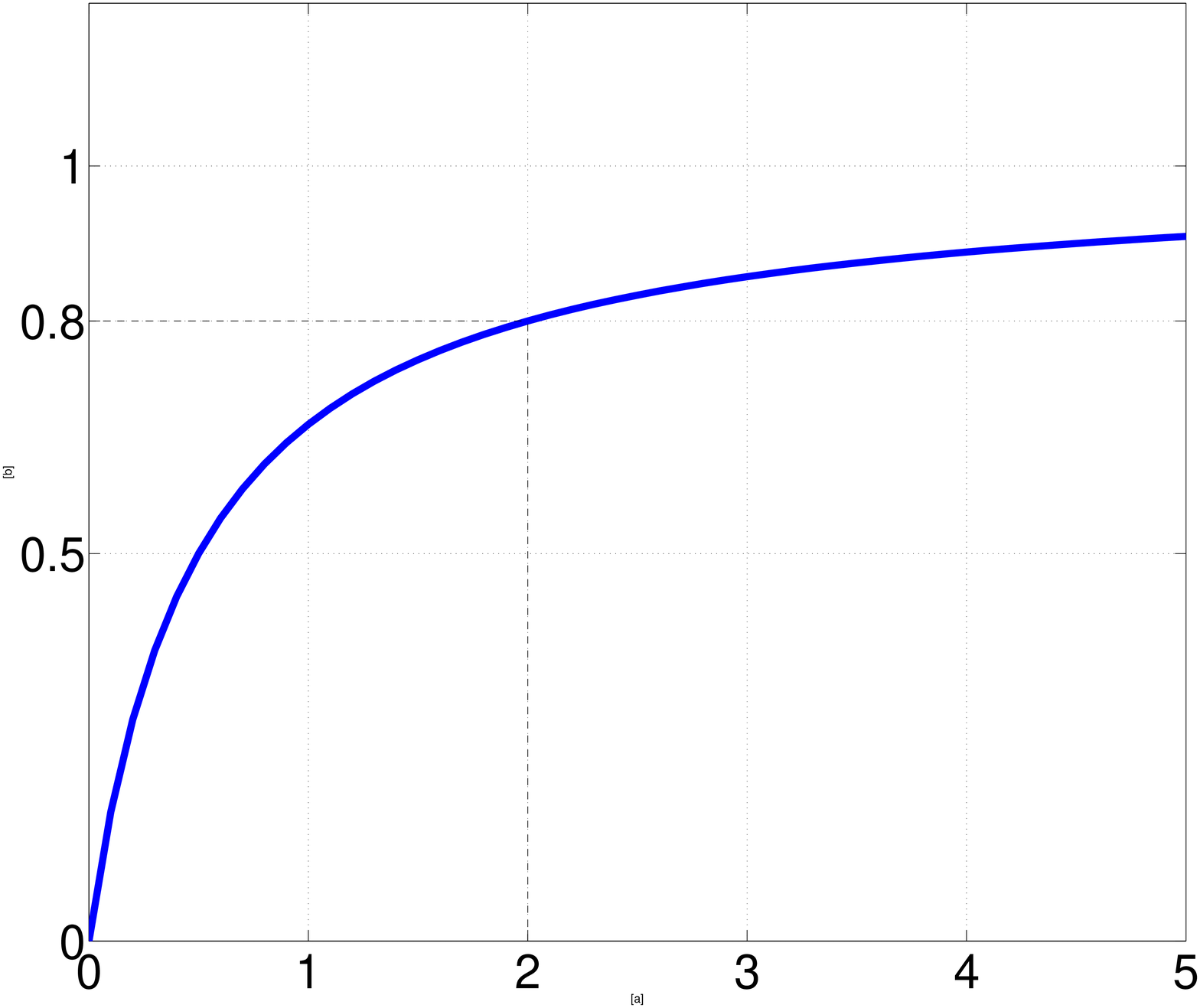}
         {
         \psfrag{[a]}[][][0.8]{\hspace{0.2in}$K_L$}
         \psfrag{[b]}[][][0.8][-90]{\hspace{0.0in}$H_\infty$}
         }
         }
         \quad
         \subfigure[The local $H_\infty$ norm from noise $m$ to estimation error $\bar{e}_2$ with respect to the parameter $K_{21}$.]{
         \label{fig:hinfnorm_k21_local}
         \psfragfig[width=0.45\textwidth]{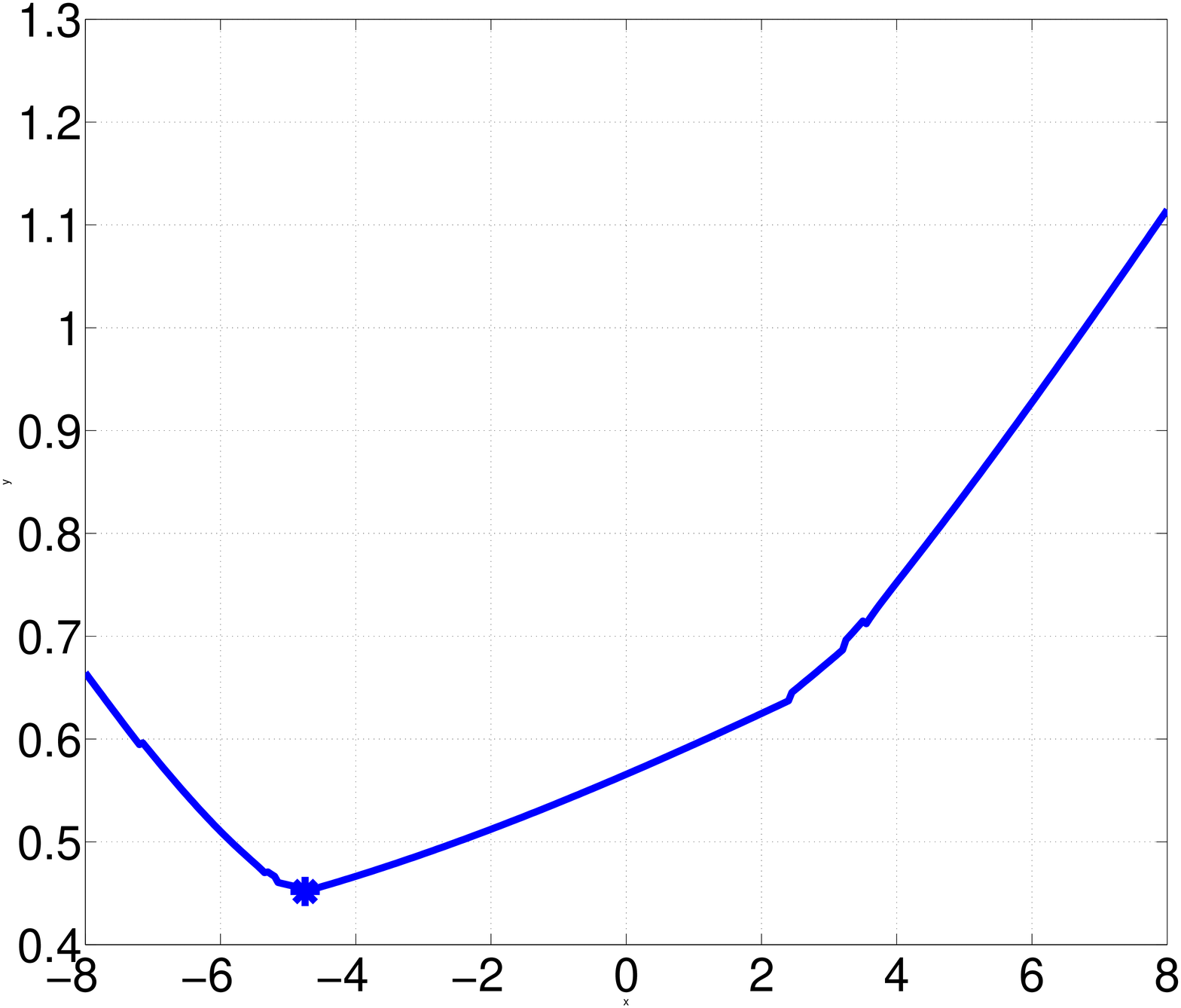}
         {
         \psfrag{x}[][][0.8]{\hspace{0.4in}$K_{21}$}
         \psfrag{y}[][][0.8][-90]{\hspace{0.0in}$H_\infty$}
         }
         }         
         \caption{Comparison between the $H_\infty$ norms for the proposed observer and the Luenberger observer, with fixed parameters $K_{11}= K_{22}=2$ and $a =-0.5$ (improved by approximately $43.8\%$).}
         \label{fig:hinfnorm_k21_global_local}
\end{figure}    }

It is important to point out that the observer proposed in this paper will also outperform the Luenberger observer in \eqref{eg:scalarplantob} when, in addition, agent 2 can transmit information to agent 1, i.e., the link between the two agents is bidirectional. Such an improvement is unique for the following two reasons. When the two agents are connected by a bidirectional link, our observer can be considered to be a bank of two observers providing a global estimate that averages the estimate of each individual observer.  When the innovation terms ``$-K_{21}(\hat y_1 - y_1)$'' and ``$-K_{12}(\hat y_2 - y_2)$'' are missing, it can be shown that the effect of noise in the global estimate cannot be reduced -- bank of observers currently available in the literature suffer from this shortcoming \IfJournal{{(see \cite[Appendix D]{Li.Sanfelice.13.TR.Interconnected}}}{(see Appendix~\ref{app:argumenton2uncoupledobs}} for a proof of this claim).  This suggests that the innovation terms in our interconnected observer are key.  The second reason stems from the fact that our observer can be viewed as an ``augmented-dimension observer'' since, in general, it would have dimension $Nn$ for a plant of dimension $n$.  This property would contradict the well-known fact that an observer in the form \eqref{eg:scalarplantob} (or, in general, of the form \eqref{eq:singleob}) minimizing the mean square estimation error under perturbations has necessarily the same dimension as the plant (see, e.g., \cite[Section 4.2, Definition 4.3, and Theorem 4.5]{72tradeoff} and  \IfJtwoC{{\color{black}\cite[Section~\uppercase\expandafter{\romannumeral4}.C]{Li.Sanfelice.13.TR.Interconnected}}}{Section~\ref{subsec:comparison_obs}}).  However, when performance specifications (relative to the optimal observer) are added, which, in this paper, are formulated in terms of eigenvalue constraints, an $n$-dimensional observer may not be optimal.  The augmented dimension (larger than the plant) is the key feature that enables our observer to outperform observers of the form \eqref{eg:scalarplantob}, in particular, by mitigating the typical amplification of noise due to large gain required to speed up convergence.

As we show next, the idea behind the proposed interconnected observer illustrated in the example above generalizes to the case where $N$ agents can measure the plant's output and share information over a graph.
\IfJtwoC{\vspace{-9pt}}{}
\section{Interconnected Observers}
\label{sec:coupledpairofLobserver}
\subsection{Notation and basic definitions}
Given a matrix $A$ with Jordan form $A = X J X^{-1}$,
$\alpha(A) := \max\{Re(\lambda):\ \lambda\in \mbox{eig}(A)\}$, where $\mbox{eig}(A)$ denotes the eigenspace of $A$;
$\mu(A): = \max\{Re(\lambda)/2:\ \lambda\in \mbox{eig}(A+A^\top)\}$;
$|A| := \max\{|\lambda|^\frac{1}{2}:\ \lambda\in \mbox{eig}(A^\top A)\}$;
$\kappa(A):=\min\{|X| |X^{-1}|:\ A = X J X^{-1} \}$;
$A$ is dissipative if $A+A^\top<0$.
Given a vector $u\in \mathbb{R}^n$, $|u| := \sqrt{u^\top u}$. 
\IfJtwoC{}{Given a Lebesgue measurable function $t\mapsto G(t)$, the norm $||G||_1$ is defined by $||G||_1 : = \int_0^\infty || G(t) || dt$, where $|| G(t) || = \sup \{ |G(t)u |: u \in \mathbb{R}^n \,  \textrm{and}\, | u | \leq 1 \}$ for all $t\geq 0$. }
Given a function $m: \mathbb{R}_{\geq 0 } \to \mathbb{R}^n$, $ |m|_\infty : = \sup_{t \geq 0} | m(t) |$.
\IfJtwoC{}{Given a function $\nu: \mathbb{R}_{\geq 0 } \to \mathbb{R}$, $ D^+ \nu(t) : = \lim \sup_{h \to 0^+} \frac{\nu(t+h)-\nu(t)}{h}$.}
The set of complex numbers is denoted by $\mathbb{C}$. 
The set of natural numbers is denoted by $\mathbb{N}:= \{1,2,3, \cdots\}$. 
Given a symmetric matrix $P$, $\lambda_{\max}(P) : =\max\{\lambda : \ \lambda\in \mbox{eig}(P) \} $ and $\lambda_{\min}(P) : =\min\{\lambda : \ \lambda\in \mbox{eig}(P) \} $.
For a continuous transfer function $\mathbb{C}\ni s \mapsto T(s)\in \mathbb{C}$, the $H_\infty$ norm is defined as $||T||_\infty = \sup_{\omega\in\mathbb{R}} ||T(j\omega)||$, $T$ is called stable if all its poles have negative real part, the dominant pole for a stable transfer function is the pole with largest real part, the rate of convergence of a closed-loop system with stable transfer function is defined by the absolute value of real part of the dominant pole. 
\IfJtwoC{}{Given a function $f:[0,\infty)\to \mathbb{R}$, $f$ is square integrable if $\int_{0}^{\infty} f(t)dt$ exists and is finite. 
For a continuous differentiable function $\mathbb{R}\ni x \mapsto f(x)\in \mathbb{R}$, $f$ is pseudo-convex if for any $x_1,x_2\in\mathbb{R}$ such that $\nabla f(x_1)^\top (x_2-x_1)\geq0$, then $f(x_2)\geq f(x_1)$; furthermore, $f$ is pseudo-concave if $-f$ is pseudo-convex. } 
Given matrices $A, B$ with proper dimensions, we define the operator $\mbox{He}(A,B): = A^\top B + B^\top A$;
$A\otimes B$ defines the Kronecker product; 
and $A*B$ defines the Khatri-Rao product.
\IfJtwoC{}{ Given $N\in\mathbb{N}$, $I_N\in \mathbb{R}^{N\times N}$ defines the identity matrix, 
$1_N$ is the vector of $N$ ones,  
 and  $\Pi_N := I_N - \frac{1}{N}1_N 1_N^\top$. }
Given a set $S$, the function $\mbox{card}(S)$ defines the cardinality of the set $S$.
A function $\alpha:\reals_{\geq 0} \to \reals_{\geq 0}$ is a class-${\cal K}_\infty$ function, also written $\alpha\in {\cal K}_\infty$, if $\alpha$ is zero at zero, continuous, strictly increasing, and unbounded.
A function $\beta:\reals_{\geq 0}\times \reals_{\geq 0} \to \reals_{\geq 0}$ is a class-${\cal KL}$ function, also written $\beta \in {\cal KL}$, if it is nondecreasing in its first argument, nonincreasing in its second argument, $\lim_{r\to 0^+} \beta(r,s) = 0$ for each $s\in\reals_{\geq 0}$, and $\lim_{s\to \infty}\beta(r,s) = 0$ for each $r\in\reals_{\geq 0}$.
\IfJtwoC{\vspace{-9pt}}{}
\subsection{Preliminaries on graph theory}
A directed graph (digraph) is defined as $\Gamma = ({\cal V}, {\cal E}, G)$. The set of nodes of the digraph are indexed by the elements of ${\cal V} = \{1,2,\dots, N\}$, and the edges are the pairs in the set ${\cal E} \subset {\cal V} \times {\cal V}$. Each edge directly links two nodes, i.e., an edge from $i$ to $j$, denoted by $(i,j)$, implies that agent $i$ can send information to agent $j$. The adjacency matrix of the digraph $\Gamma$ is denoted by $G = (g_{ij})\in \mathbb{R}^{N\times N}$, where $g_{ij} =1$ if $(i,j)\in {\cal E}$, and $g_{ij}=0$ otherwise. A digraph is undirected if $g_{ij}=g_{ji}$ for all $i,j\in {\cal V}$. The in-degree and out-degree of agent $i$ are defined by $d^{in}(i) = \sum_{j=1}^N g_{ji}$ and $d^{out}(i) = \sum_{j=1}^N g_{ij}$. 
\IfJtwoC{}{A digraph is weight-balanced if, for each node $i\in{\cal V}$, the in-degree equals its out-degree. }
The  in-degree matrix $D$ is the diagonal matrix with entries $D_{ii}=d^{in}(i)$, for all $i\in{\cal V}$. 
\IfJtwoC{}{The Laplacian matrix of the graph $\Gamma$, denoted by ${\cal L}$, is defined as ${\cal L}=D - G$. The Laplacian has the property that ${\cal L}1_N = 0$. 
Therefore, $0$ is an eigenvalue of the matrix ${\cal L}$. Furthermore, the rest of the eigenvalues of ${\cal L}$ have nonnegative real parts.}
\IfJtwoC{}{Denote by ${\cal M}_i$ the set containing all edges that connected to the $i$-th agent, i.e., ${\cal M}_i:= \{(j,i): j\in{\cal V}, (j,i)
\in {\cal E}\}$.}
The set of indices corresponding to the neighbors that can send information to the $i$-th agent is denoted by ${\cal I}(i):=\{j\in{\cal V}: (j,i)
\in {\cal E} \}$.
\IfJtwoC{\vspace{-15pt}}{}
\subsection{Observer structure and basic properties}
The general form of the proposed observer consists of $N$ {\em interconnected observers} with output given by the average over a graph of the states of the individual observers.\footnote{More general linear combinations defining $\bar{x}_i$ are possible, {\it i.e.}, $\bar{x}_i = \sum_{j \in {\cal I}(i)} \eta_j \hat x_j$ with $\eta_j \in \mathbb{R}$ for all $j$ and $\sum_{j \in {\cal I}(i)}{\eta_j} =1$.} 
Specifically, consider a network of $N$ agents defined by a graph $\Gamma=({\cal V},{\cal E},G)$. 
For the estimation of the plant's state, a local state observer using information from its neighbors is attached to each agent. More precisely, for each $i\in{\cal V}$, the agent $i$ runs a local state observer given by 
\IfJtwoC{
      \begin{align}  
      \begin{split}
      \dot {\hat x}_i &= A {\hat x}_i - \sum_{j \in {\cal I}(i)}K_{ij}( \hat y_j -  y_j),\\[-5pt]
      \hat y_i & = C \hat x_i,\quad
      \bar{x}_i  = \frac{1}{\mbox{card}({\cal I}(i))} \sum_{j \in {\cal I}(i)} \hat x_j,
      \end{split}
      \label{eq:graph_individual}
      \end{align}
}{
      \begin{align}  
      \dot {\hat x}_i &= A {\hat x}_i - \sum_{j \in {\cal I}(i)}K_{ij}( \hat y_j -  y_j),\quad 
      \hat y_i  = C \hat x_i,\quad
      \bar{x}_i  = \frac{1}{\mbox{card}({\cal I}(i))} \sum_{j \in {\cal I}(i)} \hat x_j, \label{eq:graph_individual}
      \end{align} }
where $\hat x_i $ denotes the state variable of the observer, $\bar x_i$ is the local estimate of the plant's state $x$, and $y_i$ denotes the measurement of $y$ in \eqref{eq:plant} taken by the $i$-th agent under measurement noise $m_i$, that is 
$
y_i  = Cx + m_i.
$ 
The information that the $i$-th agent obtains from its neighbors are the values of $\hat x_j$'s and $y_j$'s for each $j\in {\cal I}(i)$. 
The collection of local state observers in \eqref{eq:graph_individual} connected via the graph $\Gamma$ defines the proposed {\em interconnected observer}.

 To analyze the properties of interconnected observers, define for each  $i \in {\cal V}$, $e_i: = \hat x_i - x$ and the associated vector $e: = (e_1, \dots, e_N)$. Furthermore, define the local estimation error $\bar e_i : = \bar x_i - x$,  the global estimation error vector $\bar e: = (\bar e_1, \dots, \bar e_N)$, and the noise vector $m:= (m_1,  \dots, m_N)$. 
Then, it follows that 
\IfJtwoC{
\begin{align}
\begin{split}
\dot e_i &= A  e_i - \sum_{j \in {\cal I}(i)}K_{ij}C e_j + \sum_{j \in {\cal I}(i)}K_{ij}m_j,\\[-4pt]
\bar{e}_i &= \frac{1}{\mbox{card}({\cal I}(i))} \sum_{j \in {\cal I}(i)} e_j, 
\end{split}
\label{eq:error_individual}
\end{align}
}{
\begin{align}
\dot e_i &= A  e_i - \sum_{j \in {\cal I}(i)}K_{ij}C e_j + \sum_{j \in {\cal I}(i)}K_{ij}m_j,\quad 
\bar{e}_i  = \frac{1}{\mbox{card}({\cal I}(i))} \sum_{j \in {\cal I}(i)} e_j, 
\label{eq:error_individual}
\end{align} }
which can be rewritten in the compact form
\IfJtwoC{
\begin{align}
\begin{split}
\!\!\! \dot e   & = (I_N \!\otimes\! A \!-\! ({\cal K}\!*\!G^\top)(I_N \!\otimes\! C) ) e \!+\! ({\cal K}\!*\!G^\top) m,\\
\!\!\! \bar{e}  &= (D^{-1}\!\otimes\! I_n)(G^\top \!\otimes\! I_n) e,
\end{split}
\label{eq:general_error_compact_graph}
\end{align}
}{
\begin{align}
\dot e   & = (I_N \!\otimes\! A \!-\! ({\cal K}\!*\!G^\top)(I_N \!\otimes\! C) ) e \!+\! ({\cal K}\!*\!G^\top) m,\quad
\bar{e}     = (D^{-1}\!\otimes\! I_n)(G^\top \!\otimes\! I_n) e,
\label{eq:general_error_compact_graph}
\end{align} }
where $G$ is the adjacency matrix, $D$ is the in-degree matrix, 
\begin{align}
{\cal K} = 
\left[\begin{array}{cccc}
K_{11}  & K_{12} & \cdots   & K_{1N}\\
K_{21}  & K_{22} & \cdots   & K_{2N}\\
\vdots   & \vdots   & \ddots   & \vdots\\
K_{N1} & K_{N2} & \cdots   & K_{NN}
\end{array}\right],
\end{align}
and the Khatri-Rao product ${\cal K}*G^\top$ is such that ${\cal K}$ is treated as $N\times N$ block matrices with $K_{ij}$'s as blocks. Define 
\IfJtwoC{
\begin{align}
\begin{split}
{\cal A}&:= I_N \otimes A - ({\cal K}*G^\top)(I_N \otimes C),\\
{\cal B}&:= {\cal K}*G^\top, {\cal C}:=  (D^{-1}\!\otimes\! I_n)(G^\top \otimes I_n).
\end{split}
\label{eq:matrix_graph}
\end{align}
}{
\begin{align}
\begin{split}
{\cal A}&:= I_N \otimes A - ({\cal K}*G^\top)(I_N \otimes C),\quad 
{\cal B}:= {\cal K}*G^\top,  \quad {\cal C}:=  (D^{-1}\!\otimes\! I_n)(G^\top \otimes I_n).
\end{split}
\label{eq:matrix_graph}
\end{align} }
Then, the transfer function from measurement noise $m$ to error $\bar{e}$ is given by $T(s) = {\cal C} (sI - {\cal A})^{-1} {\cal B}$. For the purpose of designing the proposed interconnected observer, each agent is self-connected, i.e., $(i,i)\in {\cal E}$. Therefore, we have $\mbox{\rm tr}(D) \geq N$.
\begin{remark}
The matrix $I_N \otimes A$ is a block diagonal matrix with matrix $A$ in each of the $N$ diagonal blocks (of dimension $n \times n$).  The matrix ${\cal K}*G^\top$ defines the gain matrix for the graph, while $ (D^{-1} \otimes I_n)(G^\top \otimes I_n)$ generates the estimation matrix for each agent by averaging the local estimates\IfJtwoC{.}{ obtained from its neighbors.} 
\end{remark}
It can be verified that, under a detectability condition, interconnected observers can be designed so that the origin of the error system in \eqref{eq:general_error_compact_graph} is (exponentially) stable.

\begin{proposition}
For the plant \eqref{eq:plant} with measurement noise $m_i\equiv 0$ for each agent $i$, if the pair $(A,C)$ is detectable, then, for any $N \in \mathbb{N}$, there exists a digraph $\Gamma$ with adjacency matrix $G$ and a gain ${\cal K}$ such that the matrix ${\cal A}$ is Hurwitz and the resulting system \eqref{eq:general_error_compact_graph} has its origin exponentially stable.
\end{proposition}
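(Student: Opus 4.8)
The plan is to prove this existence claim by exhibiting the simplest admissible topology: the graph in which every agent is connected only to itself. Since the construction standing assumption already requires each agent to be self-connected, the minimal choice ${\cal E} = \{(i,i): i \in {\cal V}\}$ is admissible; it yields adjacency matrix $G = I_N$, in-degree matrix $D = I_N$ (so that $\mathrm{tr}(D) = N$), and singleton neighbor sets ${\cal I}(i) = \{i\}$. First I would unwind the Khatri-Rao product for this graph. Recalling from the componentwise error dynamics \eqref{eq:error_individual} that the $(i,j)$ block of ${\cal K}*G^\top$ equals $g_{ji}K_{ij}$, and using $g_{ji} = \delta_{ij}$ for $G = I_N$, the product collapses to the block-diagonal matrix $\mathrm{blockdiag}(K_{11}, \dots, K_{NN})$; the off-diagonal gains $K_{ij}$ with $i \neq j$ are annihilated and may be left arbitrary.

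The key observation is that ${\cal A}$ then inherits a block-diagonal structure. Since $I_N \otimes A$ and $I_N \otimes C$ are block diagonal with diagonal blocks $A$ and $C$ respectively, we obtain
\[ {\cal A} = I_N \otimes A - \mathrm{blockdiag}(K_{11}, \dots, K_{NN})(I_N \otimes C) = \mathrm{blockdiag}(A - K_{11}C, \dots, A - K_{NN}C). \]
Because the spectrum of a block-diagonal matrix is the union of the spectra of its diagonal blocks, ${\cal A}$ is Hurwitz if and only if each $A - K_{ii}C$ is Hurwitz.

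I would close the argument by invoking detectability. Since $(A,C)$ is detectable, there exists a gain $L$ for which $A - LC$ is Hurwitz; setting $K_{ii} = L$ for every $i \in {\cal V}$ makes every diagonal block equal to $A - LC$, hence ${\cal A}$ Hurwitz. As \eqref{eq:general_error_compact_graph} with $m \equiv 0$ reduces to the linear system $\dot e = {\cal A} e$, a Hurwitz ${\cal A}$ immediately yields global exponential stability of the origin by standard linear-systems theory, completing the proof for the given (arbitrary) $N$.

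There is no genuinely hard step here: the statement is an existence claim, and the decoupled (self-loops only) graph trivializes the interconnection so that the $Nn$-dimensional problem factors into $N$ identical copies of the classical Luenberger design. The only point requiring care is the bookkeeping for the Khatri-Rao product ${\cal K}*G^\top$ — verifying that its $(i,j)$ block is $g_{ji}K_{ij}$ so that $G = I_N$ indeed produces block diagonality — together with confirming that $G = I_N$ respects the standing self-connection requirement. I note that the proposition asserts existence of \emph{some} graph, so a richer, strongly connected topology is unnecessary; the later sections' contribution is that nontrivial interconnections improve performance, not feasibility.
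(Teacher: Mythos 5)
Your proposal is correct and follows essentially the same route as the paper: take $G = I_N$ (self-loops only), observe that the error dynamics decouple into $\dot e_i = (A - K_{ii}C)e_i$, and use detectability to choose gains making each block Hurwitz, hence ${\cal A}$ Hurwitz and the origin exponentially stable. Your additional bookkeeping on the Khatri-Rao product (the $(i,j)$ block being $g_{ji}K_{ij}$) just makes explicit what the paper leaves implicit, so there is no substantive difference.
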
 
\begin{proof}
For any $N\in\mathbb{N}$, consider $G = I_N$. Then it follows that $\dot e_i = (A-K_{ii}C)e_i$ for each $i\in{\cal V}$. Under the assumption that the pair $(A,C)$ is detectable, immediately we know that, for each $i\in{\cal V}$, there exists $K_{ii}$ such that $A-K_{ii}C$ is Hurwitz. Therefore, the resulting ${\cal A}$ is Hurwitz. 
\end{proof}
\IfJtwoC{\vspace{-14pt}}{}
\subsection{$\cal KL$ characterization of performance and robustness}
In this section, the performance and robustness properties of observers are characterized in terms of ${\cal KL}$ bounds. More precisely, given an observer with estimation error $e$, we are interested in bounds of the form
      \begin{flalign*}
      \begin{split}
          |e(t)|\leq \beta(|e(0)|,t)+ \varphi(|m|_\infty) \qquad \forall t \geq 0,
      \end{split}
      \end{flalign*}
where $t\mapsto e(t)$ is a solution to the error system, $\beta$ is a class-${\cal KL}$ function, and $\varphi$ is a class-${\cal K}_\infty$ function.
To establish and compare this property with that of the interconnected observers, the next result characterizes such bounds for the proposed observer so that it can be designed to 
outperform those due to a Luenberger observers.
\begin{proposition}
\label{prop:issbound}
For the plant \eqref{eq:plant}, assume the pair $(A,C)$ is detectable. Let  $N\in\mathbb{N}$ and a digraph $\Gamma=({\cal V},{\cal E},G)$ be given. If there exists a gain ${\cal K}$ such that at least one of the following conditions are satisfied:
\begin{enumerate}
\item [1)] The matrix ${\cal A}$ is Hurwitz with distinct eigenvalues; 
\item [2)] The matrix ${\cal A}$ is dissipative, i.e.,  for some $\bar{\alpha} > 0$, ${\cal A}^\top + {\cal A} \leq - 2 \bar{\alpha} I$;
\item [3)] There exists $P=P^\top>0$ such that $\mbox{\rm He}({\cal A}, P) \leq - 2 \bar{\alpha} P$ for some $\bar{\alpha} > 0$;
\end{enumerate}
then, there exist a class-${\cal KL}$ function $\beta: \mathbb{R}_{\geq 0}\times \mathbb{R}_{\geq 0} \to \mathbb{R}_{\geq 0}$ and a class-${\cal K}$ function $\varphi: \mathbb{R}_{\geq 0} \to \mathbb{R}_{\geq 0}$ such that the solution $\bar{e}$ of \eqref{eq:general_error_compact_graph} from any $e(0)\in\mathbb{R}^{nN}$ satisfies
\begin{align} 
| \overline{e}(t) | \leq \beta(|{e}(0)|,t) + \varphi (|m|_\infty) \quad \forall t\in \mathbb{R}_{\geq 0}.
\label{neq:KLbounds_graph}
\end{align}
In particular, the functions $\beta$ and $\varphi$ can be chosen, for all $s,t\geq 0$, as follows: if $1)$ holds, then,
$         \beta (s, t) \!=\! \kappa(\!{\cal A}) |{\cal C}| \! \exp( {\alpha}({\cal A}) t) s$, 
$         \varphi (s) \!=\!  \kappa({\cal A}) \! \frac{|{\cal B}| |{\cal C}|}{| {\alpha} ({\cal A}) |}\!s$; 
 if $2)$ holds, then, 
$         \beta (s, t)=  |{\cal C}| \exp( \mu({\cal A}) t) s$,
$         \varphi(s) = \frac{|{\cal B}| |{\cal C}| }{| \mu ({\cal A}) |}s$;
 if $3)$ holds, then,
$         \beta (s, t) = \sqrt{c_p}|{\cal C}|\exp(-\lambda t) s$,
$         \varphi(s) =  c_p \frac{|{\cal B}||{\cal C}|}{|\lambda|} s, $
with $\lambda = \frac{ \bar{\alpha} \lambda_{\min} (P)}{\lambda_{\max}(P)}$ and $c_p= \frac{\lambda_{\max}(P)}{\lambda_{\min}(P)}$.           
\end{proposition}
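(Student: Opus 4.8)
The plan is to treat all three cases through a single variation-of-constants representation of the linear error system $\dot e = \mathcal{A}e + \mathcal{B}m$, $\bar e = \mathcal{C}e$, and to reduce the whole statement to one scalar estimate on the transition matrix. Writing
\[
e(t) = e^{\mathcal{A}t}e(0) + \int_0^t e^{\mathcal{A}(t-\tau)}\mathcal{B}\,m(\tau)\,d\tau ,
\]
applying $\mathcal{C}$, and using the triangle inequality together with submultiplicativity of the induced norm gives
\[
|\bar e(t)| \le |\mathcal{C}|\,|e^{\mathcal{A}t}|\,|e(0)| + |\mathcal{C}|\,|\mathcal{B}|\left(\int_0^t |e^{\mathcal{A}(t-\tau)}|\,d\tau\right)|m|_\infty .
\]
Thus the entire problem reduces to producing a bound $|e^{\mathcal{A}t}| \le M\,e^{-\gamma t}$ with $\gamma>0$: the first term then yields $\beta$, and the convolution integral, bounded by $M/\gamma$, yields $\varphi$ with its characteristic $1/|\mathrm{rate}|$ factor. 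Each of the three hypotheses is simply a different route to this exponential bound, and in each case the hypothesis also forces $\mathcal{A}$ to be Hurwitz, so $\gamma>0$ and the integral converges.

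For case $1)$, since $\mathcal{A}$ is Hurwitz with distinct eigenvalues it is diagonalizable as $\mathcal{A}=XJX^{-1}$ with $J$ diagonal; then $e^{\mathcal{A}t}=Xe^{Jt}X^{-1}$, and because $e^{Jt}$ is diagonal with entries $e^{\lambda_i t}$ one has $|e^{Jt}| = e^{\alpha(\mathcal{A})t}$ for $t\ge 0$. Bounding $|e^{\mathcal{A}t}| \le |X|\,|X^{-1}|\,e^{\alpha(\mathcal{A})t}$ and minimizing $|X|\,|X^{-1}|$ over eigenvector matrices gives $M=\kappa(\mathcal{A})$ and $\gamma=|\alpha(\mathcal{A})|$, reproducing the stated $\beta$ and $\varphi$. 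For case $2)$, I would invoke the logarithmic-norm (matrix measure) estimate $|e^{\mathcal{A}t}|\le e^{\mu(\mathcal{A})t}$; dissipativity $\mathcal{A}^\top+\mathcal{A}\le -2\bar\alpha I$ forces $\mu(\mathcal{A})\le -\bar\alpha<0$, so $M=1$ and $\gamma=|\mu(\mathcal{A})|$, again matching the claim directly.

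Case $3)$ I would handle by a Lyapunov/comparison argument rather than a raw transition-matrix bound, because the stated gain carries the full condition number $c_p=\lambda_{\max}(P)/\lambda_{\min}(P)$ and not merely $\sqrt{c_p}$. Taking $V(e)=e^\top P e$, the hypothesis $\mathrm{He}(\mathcal{A},P)\le -2\bar\alpha P$ gives $\dot V \le -2\bar\alpha V + 2e^\top P\mathcal{B}m$ along solutions; using $e^\top P e \ge \lambda_{\min}(P)|e|^2$ and $V \le \lambda_{\max}(P)|e|^2$ converts the decay term into $\dot V \le -2\lambda V + (\mathrm{input})$ with $\lambda=\bar\alpha\lambda_{\min}(P)/\lambda_{\max}(P)$, while the cross term is dominated via Young's inequality. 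The comparison lemma and the two-sided bound $\lambda_{\min}(P)|e|^2 \le V \le \lambda_{\max}(P)|e|^2$ then return the estimate in $|e|$, after which $\bar e=\mathcal{C}e$ supplies the $|\mathcal{C}|$ factors; setting $P=I$ recovers case $2)$ as a special instance.

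I expect the main obstacle to lie precisely in case $3)$: getting the exact rate $\lambda$ and the condition-number factors right. A naive bound routed through $|e^{\mathcal{A}t}|$ would produce $\sqrt{c_p}$ in the input gain rather than the claimed $c_p$, so the input-to-state estimate must be carried out directly on $V$ with the cross term split carefully, and the seemingly suboptimal rate $\lambda$ (rather than $\bar\alpha$) must be traced to the detour through $|e|^2$. The remaining cases are routine spectral or logarithmic-norm facts. I would also note that detectability of $(A,C)$ is used only to guarantee, via the preceding proposition, that a qualifying gain $\mathcal{K}$ exists; once such a $\mathcal{K}$ is fixed, the bound follows entirely from the stated spectral or Lyapunov property of $\mathcal{A}$.
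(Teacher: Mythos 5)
Your cases 1) and 2) follow the paper's proof essentially verbatim: variation of constants, the bounds $|\exp({\cal A}t)|\leq \kappa({\cal A})\exp(\alpha({\cal A})t)$ (diagonalizable case) and $|\exp({\cal A}t)|\leq \exp(\mu({\cal A})t)$ (dissipative case), and the $\|\Phi\|_1$ bound on the convolution kernel; those parts are correct and need no further comment.

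The genuine gap is in case 3), exactly where you flagged the difficulty, because the mechanism you propose --- Young's inequality on the cross term followed by the comparison lemma applied to $V$ --- cannot produce the pair $(\beta,\varphi)$ claimed in the statement. Concretely: from $\dot V\leq -2\lambda V + 2\lambda_{\max}(P)|{\cal B}|\,|e|\,|m|$, any Young split with parameter $\eta>0$ gives $\dot V\leq -\rho V + \frac{\lambda_{\max}(P)^2|{\cal B}|^2}{\eta}|m|^2$ with $\rho = 2\lambda-\eta/\lambda_{\min}(P)<2\lambda$, and after the comparison lemma and the square root needed to return from $V$ to $|e|$, the decay rate is $\rho/2<\lambda$. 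Enforcing the claimed input gain $c_p|{\cal B}|\,|{\cal C}|/|\lambda|$ forces $\eta=\lambda\lambda_{\min}(P)$, hence $\rho=\lambda$ and a bound decaying like $\exp(-\lambda t/2)$, strictly weaker than the claimed $\beta(s,t)=\sqrt{c_p}\,|{\cal C}|\exp(-\lambda t)s$; letting $\eta\to 0$ recovers the rate but blows up the gain, so no choice of $\eta$ delivers both stated functions. Since Theorem~\ref{thm:basedonprops} compares precisely these functions, this is not a cosmetic loss. The paper's proof avoids Young altogether: it sets $W(t)=\sqrt{V(e(t))}$ and observes that in $D^+W = \dot V/(2\sqrt{V})$ the factor $\sqrt{V}\geq\sqrt{\lambda_{\min}(P)}\,|e|$ cancels the $|e|$ in the cross term, yielding the linear differential inequality $D^+W(t)\leq -\lambda W(t) + \frac{\lambda_{\max}(P)|{\cal B}|}{\sqrt{\lambda_{\min}(P)}}|m(t)|$; the comparison lemma applied to $W$ (not $V$) then gives exactly rate $\lambda$ and gain $c_p|{\cal B}|\,|{\cal C}|/|\lambda|$ after multiplying through by $|{\cal C}|$. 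Replace your Young step with this $\sqrt{V}$/Dini-derivative argument and case 3) closes; the rest of your proposal stands.
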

\begin{proof}
The proof can be found in \IfJtwoC{{\color{black}\cite[Appendix A]{Li.Sanfelice.13.TR.Interconnected}}}{Appendix~\ref{app:proof_prop_KL_bounds}}.
\end{proof}

Proposition~\ref{prop:issbound} provides a way to design parameters for the interconnected observer as follows. Recall that $\tilde A_L$ and $K_L$ are defined in \eqref{eq:singleob_error}. 
\begin{theorem}
\label{thm:basedonprops}
For the plant \eqref{eq:plant} with the Luenberger observer \eqref{eq:singleob} and the interconnected observers \eqref{eq:graph_individual}, let $N\in\mathbb{N}$ and a digraph $\Gamma$ be given.  If $K_L$ is such that at least one of the following conditions are satisfied: 
\begin{itemize}
\item [1)] $\tilde A_L$ is Hurwitz with distinct eigenvalues, and there exists ${\cal K}$ such that 
$\alpha({\cal A}) < \alpha(\tilde A_L)$ 
and $\kappa({\cal A})\frac{|{\cal B}| |{\cal C}|}{|\alpha({\cal A})|} < \kappa({\tilde A}_L)\frac{|K_L|}{|\alpha({\tilde A}_L)|}$; 
\item [2)] $\tilde A_L$ is dissipative, and there exists ${\cal K}$ such that $\mu({\cal A}) < \mu(\tilde A_L)$ (or $\alpha({\cal A}) < \alpha(\tilde A_L)$, respectively -- see below $c)$) and $\frac{|{\cal B}| |{\cal C}|}{|\mu({\cal A})|} < \frac{|K_L|}{|\mu({\tilde A}_L)|}$; 
\item [3)] $\tilde A_L$ satisfies $\He({\tilde A}_L, P_L)\leq -2\overline\alpha_L P_L$ for some $\overline \alpha_L >0$ and $P_L = P_L^\top>0$, and there exists ${\cal K}$ such that 
\begin{enumerate}
\item [3.1)] item $3)$ of Proposition~\ref{prop:issbound} holds with $\overline \alpha>0, P=P^\top>0$,
\item [3.2)] $\lambda:=\frac{\overline \alpha \lambda_{\min}(P)}{\lambda_{\max}(P)} < \frac{\overline \alpha_L \lambda_{\min}(P_L)}{\lambda_{\max}(P_L)}=:\lambda_L$ and $c_p \frac{|{\cal B}| |{\cal C}|}{|\lambda|} < c_{pL} \frac{|K_L|}{|\lambda_L|}$, with $c_p =  \frac{\lambda_{\max}(P)}{\lambda_{\min}(P)}$ and $c_{pL} = \frac{\lambda_{\max}(P_L)}{\lambda_{\min}(P_L)}$;  
\end{enumerate}
\end{itemize}
then, there exist $\beta\in {\cal KL}$ and $\varphi \in {\cal K}_\infty$ such that the solution $\bar{e}$ of \eqref{eq:general_error_compact_graph} from any $e(0)\in\mathbb{R}^{nN}$ satisfies
\begin{enumerate}
  \item [a)] $|\bar{e}(t)| \leq {\beta}(|e(0)|,t) + {\varphi}(|m|_\infty)$ for all $t \geq 0$; 
  \item [b)] Given nonzero $e(0)$ and $e_L(0)$, $\exists t^\star\geq 0$ such that $ {\beta}(|e(0)|,t) < \beta_L(|e_L(0)|,t)$ for all $t > t^\star$;
  \item [c)] ${\varphi}(s) < \varphi_L(s)$, for all $s \neq 0$, $s\in \mathbb{R}_{\geq 0}$.
\end{enumerate}
In particular, the functions $\beta\in {\cal KL}$ and $\varphi \in {\cal K}_\infty$ can be chosen accordingly as in Proposition~\ref{prop:issbound} while $\beta_L\in {\cal KL}$ and $\varphi_L \in {\cal K}_\infty$ can be  chosen, for all $s,t\geq 0$, as follows: if $1)$ holds, then 
$         \beta_L (s, t) \!=\! \kappa(\!{\tilde A}_L) \! \exp( {\alpha}({\tilde A}_L) t) s$, 
$         \varphi_L (s) \!=\!  \kappa({\tilde A}_L) \! \frac{|{K_L}| }{| {\alpha} ({\tilde A}_L) |}\!s$; 
 if $2)$ holds, then 
$         \beta_L (s, t)= \exp( \mu({\tilde A}_L) t) s$ 
(or $\beta_L (s, t) \!=\! \kappa(\!{\tilde A}_L) \! \exp( {\alpha}({\tilde A}_L) t) s$, respectively),
$         \varphi_L(s) = \frac{|K_L| }{| \mu ({\tilde A}_L) |}s$;
 if $3)$ holds, then 
$         \beta_L (s, t) = \sqrt{c_{pL}} \exp(-\lambda_L t) s$,
$         \varphi_L(s) =  c_{pL} \frac{|{K_L}|}{|\lambda_L|} s$. 
\end{theorem}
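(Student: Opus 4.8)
The plan is to obtain every claim from Proposition~\ref{prop:issbound}, after observing that the Luenberger error system \eqref{eq:singleob_error} is nothing but the interconnected error system \eqref{eq:general_error_compact_graph} in the degenerate case $N=1$, $G=1$, $D=1$, $\mathcal{K}=K_L$, for which $\mathcal{A}=\tilde{A}_L$, $\mathcal{B}=K_L$, and $\mathcal{C}=I_n$ (so $|\mathcal{C}|=1$). First I would settle a). In each of the three cases the stated assumptions contain exactly the structural hypothesis on $\mathcal{A}$ demanded by the matching item of Proposition~\ref{prop:issbound}: in case 1) the inequality $\alpha(\mathcal{A})<\alpha(\tilde{A}_L)<0$ makes $\mathcal{A}$ Hurwitz and, together with the (tacit) distinctness of its eigenvalues entering through $\kappa(\mathcal{A})$, places us in item 1); case 2) is item 2) with $\mathcal{A}$ dissipative; case 3.1) is item 3) verbatim. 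Proposition~\ref{prop:issbound} then yields \eqref{neq:KLbounds_graph} with the listed $\beta,\varphi$, and applying the same proposition with $N=1$ to $\tilde{A}_L$ produces $\beta_L,\varphi_L$ exactly as stated (the value $|\mathcal{C}|=1$ explains why $\beta_L,\varphi_L$ carry no $|\mathcal{C}|$ factor). This disposes of a) and fixes the four comparison functions.

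Next I would prove c), which is purely algebraic. In every case $\varphi$ and $\varphi_L$ are linear, $\varphi(s)=cs$ and $\varphi_L(s)=c_Ls$, with $c,c_L$ the nonnegative coefficients read off from Proposition~\ref{prop:issbound}; the second inequality of each hypothesis, for instance $\kappa(\mathcal{A})\frac{|\mathcal{B}||\mathcal{C}|}{|\alpha(\mathcal{A})|}<\kappa(\tilde{A}_L)\frac{|K_L|}{|\alpha(\tilde{A}_L)|}$ in case 1), is precisely $c<c_L$. Hence $\varphi(s)=cs<c_Ls=\varphi_L(s)$ for all $s\neq0$.

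For b) I would use that in each case $\beta(s,t)=C_\beta e^{-rt}s$ and $\beta_L(s,t)=C_{\beta_L}e^{-r_Lt}s$ with $C_\beta,C_{\beta_L}>0$ and positive decay rates $r,r_L$; the first inequality of each hypothesis is exactly the comparison of these rates, which in case 1) reads $\alpha(\mathcal{A})<\alpha(\tilde{A}_L)$, i.e. $r=|\alpha(\mathcal{A})|>|\alpha(\tilde{A}_L)|=r_L$, and is analogous in cases 2) and 3) (using $\mu$ and $\lambda$, respectively). Fixing nonzero $e(0),e_L(0)$, so that $|e(0)|,|e_L(0)|>0$, the ratio
\[
\frac{\beta(|e(0)|,t)}{\beta_L(|e_L(0)|,t)}=\frac{C_\beta\,|e(0)|}{C_{\beta_L}\,|e_L(0)|}\,e^{-(r-r_L)t}
\]
tends to $0$ as $t\to\infty$ because $r-r_L>0$; hence there is a $t^\star\geq0$ beyond which this ratio stays strictly below $1$, giving $\beta(|e(0)|,t)<\beta_L(|e_L(0)|,t)$ for all $t>t^\star$, which is b). The parenthetical option in case 2) is treated identically after swapping the $\mu$-pair for the $\alpha$-pair.

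The essential analytic content lives in Proposition~\ref{prop:issbound}, so the work here is mostly bookkeeping: aligning each hypothesis with the right item of the proposition and extracting the two scalar inequalities $c<c_L$ and $r>r_L$. The only point needing care is b): since $C_\beta$ and $C_{\beta_L}$ differ and can be large, no comparison holds uniformly in $t$, so the statement must be asymptotic with $t^\star$ genuinely depending on $e(0)$ and $e_L(0)$ through the leading constants; strictness of the exponent ordering $r>r_L$ is what forces the crossover regardless of those constants. I expect this crossover bookkeeping to be the main, though modest, obstacle.
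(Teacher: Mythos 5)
Your proposal is correct and is essentially the paper's own proof: the paper disposes of this theorem with the two-line remark that everything ``follows from Proposition~\ref{prop:issbound}'' since the Luenberger observer is the interconnected observer with $N=1$ (so ${\cal A}=\tilde A_L$, ${\cal B}=K_L$, ${\cal C}=I_n$), and your write-up simply fills in the bookkeeping for a), b), c) that the paper leaves implicit. One caveat worth flagging: in case 3) your crossover argument for b) needs the decay rates to satisfy $\lambda>\lambda_L$, whereas hypothesis 3.2) as printed reads $\lambda<\lambda_L$; since $\lambda,\lambda_L>0$ enter as $e^{-\lambda t}$, $e^{-\lambda_L t}$ (unlike the negative quantities $\alpha,\mu$ compared in cases 1)--2)), the printed inequality is evidently a sign slip in the statement, and your ``analogous'' treatment of case 3) silently uses the corrected direction --- which is the only reading under which conclusion b) holds for the specified $\beta$, $\beta_L$.
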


\begin{proof}
The proof follows from Proposition~\ref{prop:issbound}. Note that the Luenberger observer is a special case of the interconnected observer with $N = 1$.
\end{proof}
\stopmodif
\IfJtwoC{\vspace{-9pt}}{}
\begin{remark}
\label{remark:specialcase}
\IfJtwoC{}{Assumption $2)$ of Proposition~\ref{prop:issbound} is a special case of assumption $3)$ with matrices $P=I$ and $P_0=I$.}
Note that the boundedness property in item $2)$ in Theorem~\ref{thm:basedonprops} guarantees that the rate of convergence of the interconnected observers is faster than or equal to that of a Luenberger observer by comparing the ${\cal KL}$ estimates they induce (which is a reasonable measure of performance when the ${\cal KL}$ functions are derived using similar bounding techniques).
\end{remark} 
The ${\cal KL}$ bounds established in Proposition~\ref{prop:issbound} characterize a worse case property of the estimation error of the proposed observer, which can be compared to that of a Luenberger observer via Theorem~\ref{thm:basedonprops}. The following example illustrates this point. 
\begin{example}
\label{subset:firstorderplant}
We revisit the motivational example in Section~\ref{sec:MotivationalExample} and design an interconnected observer with $N=2$ with an all-to-all graph as shown in Figure~\ref{fig:2nodes_alltoall}.
\IfJtwoC{
         \begin{figure}[!h]
         \centering
         \psfragfig[trim=0.1cm 0.7cm 0.5cm 0.1cm,clip,width=0.16\textwidth]{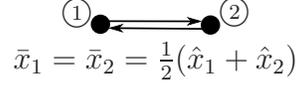}
         {
         \psfrag{[a]}[][][1]{}
         \psfrag{[b]}[][][1]{}
         \psfrag{[d]}[][][1]{\hspace{0.45in}$\bar x_1 = \bar x_2 = \frac{1}{2}(\hat x_1 + \hat x_2)$}
	}
         \put(-83,20){\textcircled{\scriptsize$1$}}
         \put(-16,20){\textcircled{\scriptsize$2$}}         
         \caption{Two agents connected as a direct graph.}
         \label{fig:2nodes_alltoall}
     \end{figure}
}{
         \begin{figure}[!h]
         \centering
         \psfragfig[trim=0.1cm 0.7cm 0.5cm 0.1cm,clip,width=0.16\textwidth]{J2agentslinkalltoall}
         {
         \psfrag{[a]}[][][1]{\textcircled{\scriptsize$1$}}
         \psfrag{[b]}[][][1]{\hspace{-0.26in}\textcircled{\scriptsize$2$}}
         \psfrag{[d]}[][][1]{\hspace{0.45in}$\bar x_1 = \bar x_2 = \frac{1}{2}(\hat x_1 + \hat x_2)$}
	}
         \caption{Two agents connected as a direct graph.}
         \label{fig:2nodes_alltoall}
     \end{figure}
}
Consider the case when two agents are experiencing common noises $m_1 = m_2 = m$.  The transfer functions 
from $m$ to $e_L$ and from $m$ to $\overline e$ (global) are given by\footnote{For the particular choice of parameters $K_{11}=K_{22}=K_L$ and $K_{12}=K_{21}=0$, $||T||_\infty = ||T_L||_\infty$.} $T_L(s) = \frac{K_L}{s-a+K_L}$ and $T(s) = {\cal C}(sI - {\cal A})^{-1} {\cal B}$. 
In particular, the proposed observer takes the form
\IfJtwoC{
     \begin{flalign}
     \begin{split}
             \dot{\hat x}_1 &=a{\hat x}_1 - K_{11}({\hat y}_1 - y) - K_{12} ({\hat y}_2 - y),  \\
             \dot{\hat x}_2 &=a{\hat x}_2 - K_{22}({\hat y}_2 - y) - K_{21} ({\hat y}_1 - y),  \\
             {\hat y}_1 &= {\hat x}_1, \quad
             {\hat y}_2 = {\hat x}_2, \quad
             {\bar x}_1 = {\bar x}_2  = \frac{{\hat x}_1+{\hat x}_2}{2}.
     \end{split}\label{eq:coupled_obs_full}
     \end{flalign}
}{
     \begin{flalign}
     \begin{split}
             \dot{\hat x}_1 &=a{\hat x}_1 - K_{11}({\hat y}_1 - y) - K_{12} ({\hat y}_2 - y),  \quad 
             \dot{\hat x}_2 =a{\hat x}_2 - K_{22}({\hat y}_2 - y) - K_{21} ({\hat y}_1 - y),  \\
             {\hat y}_1 &= {\hat x}_1, \quad
             {\hat y}_2 = {\hat x}_2, \quad
             {\bar x}_1 = {\bar x}_2  = \frac{{\hat x}_1+{\hat x}_2}{2}.
     \end{split}\label{eq:coupled_obs_full}
     \end{flalign} }
Then, we have the following result. 
\begin{proposition}
\label{prop:example_Hinf}
Given $a,K_L\in\mathbb{R}$ such that $a\neq 0$ and $a-K_L<0$, then there exist $K_{11},K_{22},K_{12},K_{21}\in \mathbb{R}$ such that the rate of convergence of the observer \eqref{eq:coupled_obs_full} is no smaller than that of the one induced by the Luenberger observer and the $H_\infty$ norm of $T$ is smaller than the $H_\infty$ norm of $T_L$, $i.e.$, $||T||_\infty < ||T_L||_\infty$. 
\end{proposition}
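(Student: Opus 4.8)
The plan is to exhibit an explicit one-parameter family of gains and to show that, for that parameter sufficiently small, both properties hold. First I would write the error system for \eqref{eq:coupled_obs_full} under the common noise $m_1=m_2=m$ in the compact form $\dot e=\mathcal{A}e+\mathcal{B}m$, $\bar e=\mathcal{C}e$. Specializing \eqref{eq:matrix_graph} to the scalar plant ($A=a$, $C=1$) and the all-to-all graph of Figure~\ref{fig:2nodes_alltoall} gives $\mathcal{A}=aI_2-\mathcal{K}$, and, since common noise makes the disturbance enter through $(\mathcal{K}*G^\top)1_2$, the effective input is $\mathcal{B}=[\,K_{11}+K_{12}\ \ K_{21}+K_{22}\,]^\top$; moreover $\bar x_1=\bar x_2=\tfrac12(\hat x_1+\hat x_2)$ yields $\mathcal{C}=\tfrac12[\,1\ \ 1\,]$, so both outputs coincide and $T(s)=\mathcal{C}(sI-\mathcal{A})^{-1}\mathcal{B}$. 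The benchmark is $T_L(s)=K_L/(s-a+K_L)$, whose response $|K_L|/\sqrt{\omega^2+(K_L-a)^2}$ peaks at $\omega=0$, so $\|T_L\|_\infty=|K_L|/(K_L-a)$; note $K_L\neq0$ is needed for this to be positive and the claim to be non-vacuous.

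The design choice I would make is $K_{11}=K_{22}=K_L$ together with $K_{12}=0$ (an admissible choice of gains on the fixed graph), leaving $K_{21}=:\sigma$ as the only free parameter, and writing $\rho:=K_L-a>0$. With this choice $\mathcal{A}=\bigl[\begin{smallmatrix}-\rho&0\\-\sigma&-\rho\end{smallmatrix}\bigr]$ is lower triangular with a double eigenvalue at $-\rho$, so the rate of convergence equals that of the Luenberger observer (hence ``no smaller'') for every $\sigma$. A direct computation then gives $T(s)=\dfrac{(2K_L+\sigma)(s+\rho)-\sigma K_L}{2(s+\rho)^2}$, which exhibits no pole--zero cancellation when $\sigma\neq0$.

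Next I would bound $\|T\|_\infty$. Evaluating on the imaginary axis and setting $x=\omega^2$ yields $|T(j\omega)|^2=\dfrac{P^2+Q^2x}{4(x+\rho^2)^2}$, where $P:=2K_L\rho-a\sigma$ and $Q:=2K_L+\sigma$. I would choose $\sigma$ with $\operatorname{sign}(\sigma)=\operatorname{sign}(aK_L)$ and $|\sigma|$ small; then $a\sigma$ has the same sign as $2K_L\rho$ and small magnitude, so $|P|=2|K_L|\rho-|a\sigma|<2|K_L|\rho$. This is exactly the statement that $|T(0)|=|P|/(2\rho^2)<|K_L|/\rho=\|T_L\|_\infty$, i.e.\ the desired strict inequality holds at DC.

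The inequality at $\omega=0$ is not enough, and ruling out a resonant peak at some $\omega>0$ is the crux. For this I would differentiate $F(x):=(P^2+Q^2x)/(x+\rho^2)^2$; its derivative has numerator $-Q^2\bigl(x-x^\star\bigr)$ with the unique critical point $x^\star=\rho^2-2P^2/Q^2$. As $\sigma\to0$ one has $P\to2K_L\rho$ and $Q\to2K_L\neq0$, so $x^\star\to\rho^2-2\rho^2=-\rho^2<0$; by continuity $x^\star<0$ for $|\sigma|$ small, whence $F'(x)<0$ for all $x\geq0$ and $F$ is strictly decreasing. Consequently $\sup_\omega|T(j\omega)|=|T(0)|<\|T_L\|_\infty$, which together with the preserved rate completes the proof. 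I expect this monotonicity step---guaranteeing that the supremum is attained at DC rather than at an interior frequency---to be the main obstacle, and the smallness of $\sigma$ is precisely what keeps $x^\star$ negative and tames it.
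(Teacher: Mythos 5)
Your proposal is correct, and its skeleton coincides with the paper's own proof: the same gain template ($K_{11}=K_{22}=K_L$, one cross-gain set to zero, the other left free; by the symmetry of the all-to-all graph your choice $K_{12}=0$, $K_{21}=\sigma$ is the mirror image of the paper's choice $K_{21}=0$, $K_{12}$ free), the same triangular structure of ${\cal A}$ giving an unchanged rate of convergence, the same reduction of the problem to showing that the $H_\infty$ norm is attained at $\omega=0$, and the same comparison of DC gains. Where you genuinely diverge is precisely at the crux you identified: ruling out an interior peak. The paper does this by proving pseudo-concavity of $\omega\mapsto ||T(j\omega)||^2$ over a two-parameter family of gains (Lemma~\ref{lemma_pseudoconvex}) and then showing that the stationary-frequency equation has no real nonzero roots on an explicitly described parameter region ${\cal D}_1\cap{\cal D}_2$; you instead differentiate the explicit one-parameter rational function $F(x)=(P^2+Q^2x)/(x+\rho^2)^2$ and use continuity in $\sigma$ to place its unique critical point $x^\star=\rho^2-2P^2/Q^2$ at a negative value, obtaining strict monotonicity on $[0,\infty)$. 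Your route is more elementary (no pseudo-convexity machinery), and your sign convention $\operatorname{sign}(\sigma)=\operatorname{sign}(aK_L)$ handles all sign combinations of $a$ and $K_L$ uniformly, whereas the paper's region ${\cal D}_2=\{0>K_{12}>\max\{-2K_L,\tfrac{4K_L(K_L-a)}{a}\},\,K_{21}=0\}$ is nonempty only for $K_L>0$, $a<0$, with the case $a>0$ deferred to a closing remark and the case $K_L<0$ not addressed at all; the price you pay is obtaining only existence for $|\sigma|$ sufficiently small, while the paper exhibits a whole interval of admissible gains. Finally, your observation that $K_L\neq 0$ is needed (otherwise $||T_L||_\infty=0$ and no strict improvement is possible) points to a genuine gap in the proposition as stated, which the paper's proof also silently assumes away.
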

\begin{proof}
The proof can be found in \IfJournal{{\color{black}\cite[Appendix  B]{Li.Sanfelice.13.TR.Interconnected}}}{{Appendix~\ref{app:proof_prop_exmaple_Hinf}}}.
\end{proof}
It should be noted that averaging the estimates of two uncoupled single Luenberger observers (one at each agent) does not lead to both faster convergence rate and smaller steady state error \IfJournal{{\color{black}(see \cite[Appendix  D]{Li.Sanfelice.13.TR.Interconnected})}}{(see Appendix~\ref{app:argumenton2uncoupledobs})}.
\IfJtwoC{To perform a numerical comparison,}{The error dynamics of \eqref{eq:coupled_obs} can be written as 
\IfJtwoC{
     \begin{flalign}
     \begin{split}
             \dot{ e} &\!=\!\tilde{A}{e}+\tilde{K}m, \\ 
             \tilde{A}&\!=\!\left[\!\!
             \begin{array}{ll}
             a-K_{11} & -K_{12} \\
             -K_{21}  & a-K_{22}
             \end{array}
             \!\!\right],
             \tilde{K}\!=\!\left[\!\!
             \begin{array}{l}
             K_{11} \!+\! K_{12}  \\
             K_{21} \!+\! K_{22}
             \end{array}
             \!\!\right]\!\!.\!\! 
     \end{split}\label{eq:coupled_obs_error_matrix}
     \end{flalign}
}{
     \begin{flalign}
     \begin{split}
             \dot{ e} &=\tilde{A}{e}+\tilde{K}m, \ 
             \tilde{A}=\left[
             \begin{array}{ll}
             a-K_{11} & -K_{12} \\
             -K_{21}  & a-K_{22}
             \end{array}
             \right],\
             \tilde{K}=\left[
             \begin{array}{l}
             K_{11} + K_{12}  \\
             K_{21} + K_{22}
             \end{array}
             \right]. 
     \end{split}\label{eq:coupled_obs_error_matrix}
     \end{flalign} }
where $e=[e_1\ e_2]^\top$. 
The steady-state value of \eqref{eq:coupled_obs_error_matrix} is given by $e^\star = [e_1^\star\ e_2^\star]^\top$, where
\IfJtwoC{
     \begin{align*}
             e_1^\star &=\frac{K_{11} K_{22} - K_{12} K_{21} - K_{11} a - K_{12} a}{K_{11} K_{22} - K_{12} K_{21} - K_{22} a - K_{11} a+a^2}m,\\
             e_2^\star &=\frac{K_{11} K_{22} - K_{12} K_{21} - K_{22} a - K_{21} a}{K_{11} K_{22} - K_{12} K_{21} - K_{22} a - K_{11} a+a^2}m.
     \end{align*}
}{
     \begin{align*}
            \scalebox{0.9}{ $\displaystyle e_1^\star =\frac{K_{11} K_{22} - K_{12} K_{21} - K_{11} a - K_{12} a}{K_{11} K_{22} - K_{12} K_{21} - K_{22} a - K_{11} a+a^2}m,\quad
             e_2^\star =\frac{K_{11} K_{22} - K_{12} K_{21} - K_{22} a - K_{21} a}{K_{11} K_{22} - K_{12} K_{21} - K_{22} a - K_{11} a+a^2}m$.}
     \end{align*} }
The
estimation error of the proposed observer is given by
 the quantity
 \begin{equation}\label{eqn:averageE}
 \bar{e}_i:=\bar{x}_i - x = \frac{e_1 + e_2}{2}, \quad i\in\{1,2\},
 \end{equation}
and has a steady-state value given by
\IfJtwoC{
     \begin{flalign}
     \begin{split}
           \bar{e}_i^\star \! =\!
           \frac{K_{11}\! K_{22} \!\!-\!\! K_{12} K_{21} \!\!-\!\! (\!1\!/2)\!(\! K_{11} \!\!+\!\! K_{22} \!\!+\!\! K_{12} \!\!+\!\! K_{21} \!) a}{K_{11} K_{22} \!\!-\!\! K_{12} K_{21} \!\!-\!\! K_{22} a \!\!-\!\! K_{11} a \!\!+\!\! a^2}m.
     \end{split}
     \end{flalign}
}{
     \begin{flalign}
     \begin{split}
           \bar{e}_1^\star  =  \bar{e}_2^\star  =
           \frac{K_{11} K_{22} - K_{12} K_{21} - (1/2)(K_{11}+K_{22} + K_{12} + K_{21}) a}{K_{11} K_{22} - K_{12} K_{21} - K_{22} a - K_{11} a+a^2}m.
     \end{split}
     \end{flalign} }
Under the condition that the matrix $\tilde{A}$ is Hurwitz, its eigenvalues can be written
in the general form
 $\lambda_{1,2}=-\sigma\pm j\omega,$
 where $\sigma$ is positive
 and $\omega\in\mathbb{R}$.
 Then,  referring to Theorem~\ref{thm:basedonprops}, by comparing the ${\cal KL}$ bounds, the following
 conditions guarantee  a faster convergence rate of the proposed observer:
 \IfJtwoC{
 \begin{align}
 \begin{split}
        \!\! -\sigma \!=\! \frac{-(K_{11} \!-\! a) \!-\! (K_{22} \!-\! a)}{2}< a\!-\!K_L<0,\\
         \left((K_{11}-a)+(K_{22}-a)\right)^2<4\det{\tilde{A}},
 \end{split}
 \label{neq1a:example0}
 \end{align}
 }{
\begin{eqnarray}
        -\sigma = \frac{-(K_{11}-a)-(K_{22}-a)}{2}< a-K_L<0,\quad
        \left((K_{11}-a)+(K_{22}-a)\right)^2<4\det{\tilde{A}}\label{neq1a:example0},
\end{eqnarray} }
where $\det(\tilde{A})= K_{11} K_{22} - K_{12} K_{21} - K_{22} a - K_{11} a+a^2$.
Furthermore, in order to assure an improvement on
the effect of measurement noise, the steady-state values $e^\star$ and $e_L^\star$ should satisfy
$\left|{\bar{e}}_i^\star\right|< \left|e_L^\star\right|,$
which leads to the following condition:     
\IfJtwoC{
     \begin{flalign}
     \begin{split}
            \left|\! \frac{K_{11}\! K_{22} \!\!-\!\! K_{12} K_{21}  \!\!-\!\! (\!1/2)(K_{11} \!\!+\!\! K_{22} \!\!+\!\! K_{12} \!\!+\!\! K_{21}) a }{K_{11} K_{22} \!\!-\!\! K_{12} K_{21} \!\!-\!\! K_{22} a \!\!-\!\! K_{11} a \!\!+\!\! a^2}\right| \\
           <\left|\frac{K_L}{K_L-a}\right|.
     \end{split}\label{neq2:example0}
     \end{flalign}
}{
     \begin{flalign}
     \begin{split}
            \left|\frac{K_{11} K_{22} - K_{12} K_{21} - (1/2)(K_{11}+K_{22}+K_{12}+K_{21}) a }{K_{11} K_{22} - K_{12} K_{21} - K_{22} a - K_{11} a+a^2}\right| 
           <\left|\frac{K_L}{K_L-a}\right|.
     \end{split}\label{neq2:example0}
     \end{flalign} } 
Now, to perform a numerical comparison,} we consider the case where $a=-0.5$ and $m: \mathbb{R}_{\geq 0} \to \mathbb{R}$ is a continuous bounded function. A Luenberger observer is designed following \eqref{eg:scalarplantob} to achieve a convergence rate of $2.5$ and an $H_\infty$ gain from $m$ to $e_L$ equal to $0.8$, which leads to $K_L = 2$. For the interconnected observers  \eqref{eq:coupled_obs_full},  
using Theorem~\ref{thm:basedonprops}, conditions $2)$ can be rewritten as 
\IfJtwoC{
      \begin{flalign}
      \begin{split}
           \alpha({\cal A})  \leq a - K_L, \\
           \frac{\sqrt{2}}{2} \frac{\sqrt{(K_{11}+K_{12})^2+(K_{22}+K_{21})^2}}{|{\mu}({\cal A})|} < \left| \frac{a}{a-K_L} \right|.
      \end{split}\label{neq:sim_prop3}
      \end{flalign}
}{
      \begin{flalign}
      \begin{split}
           \alpha({\cal A})  \leq a - K_L,  \quad 
           \frac{\sqrt{2}}{2} \frac{\sqrt{(K_{11}+K_{12})^2+(K_{22}+K_{21})^2}}{|{\mu}({\cal A})|} < \left| \frac{a}{a-K_L} \right|.
      \end{split}\label{neq:sim_prop3}
      \end{flalign}     }
From solving \eqref{neq:sim_prop3}, we pick parameters $K_{11} = 1.7896,\ K_{22} = 2.2278,\ K_{12} = 0.0538,\ K_{21} = -1.1633$. It can be verified that the eigenvalues of ${\cal A}$ according to this set of parameters are $-2.5087\pm 0.1208i$. Moreover, $\mu({\cal A}) = -1.9123$. 

Now we perform simulations using these parameters and different measurement noises.
With initial conditions $x(0)=3$, $\overline{x}_1(0) = \overline{x}_2(0) = \overline{x}_L(0) = 5$, the first simulation is ran for measurement noise $m(t)\equiv 0$ and the resulting trajectories are shown in Figure~\ref{fig:m0_trajs}. This figure shows that the interconnected observers converge at a faster rate compared to the Luenberger observer. In fact, item $2)$ of Theorem~\ref{thm:basedonprops} holds with $t^\star \approx 6.7s$.   
\IfJtwoC{
    \begin{figure}[!h]
         \centering
         \subfigure[$m(t)\equiv 0$.]{
         \label{fig:m0_trajs}
         \psfragfig[trim=1.8cm 0.8cm 1.5cm 0.7cm,clip, width=0.21\textwidth]{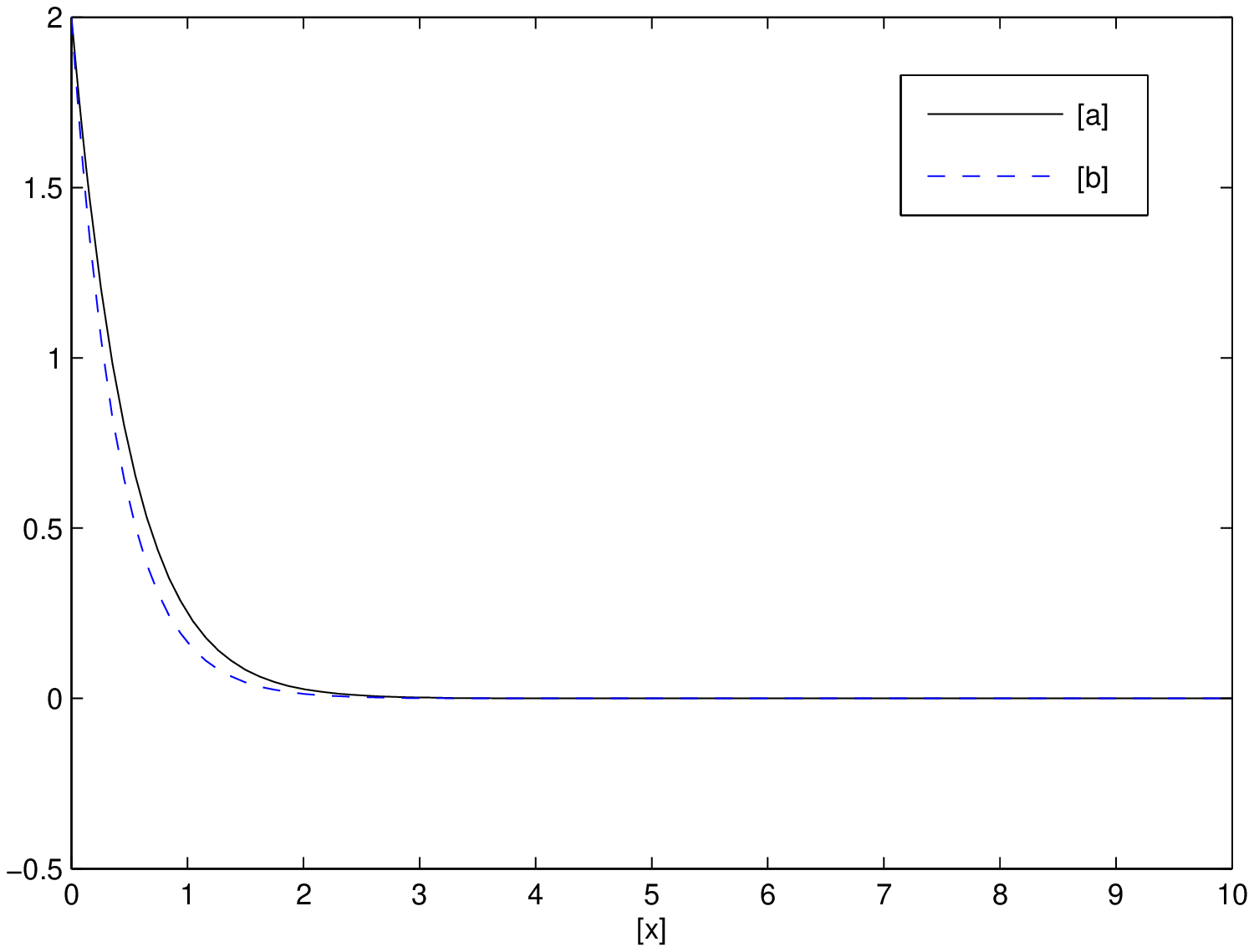}
         {
         \psfrag{[x]}{}
         \psfrag{[y]}{$$}
         \psfrag{[a]}[][][0.6]{\hspace{-0.02in}$\overline{e}$}
         \psfrag{[b]}[][][0.6]{\hspace{0.03in}$e_L$}
         }
         \put(-62,-4){\scalebox{0.6}{$t[s]$}}
         }
         \ 
         \subfigure[$m(t)\equiv 0.3$.]{
         \label{fig:m03_trajs}
         \psfragfig[trim=1.8cm 0.8cm 1.5cm 0.7cm,clip,width=0.21\textwidth]{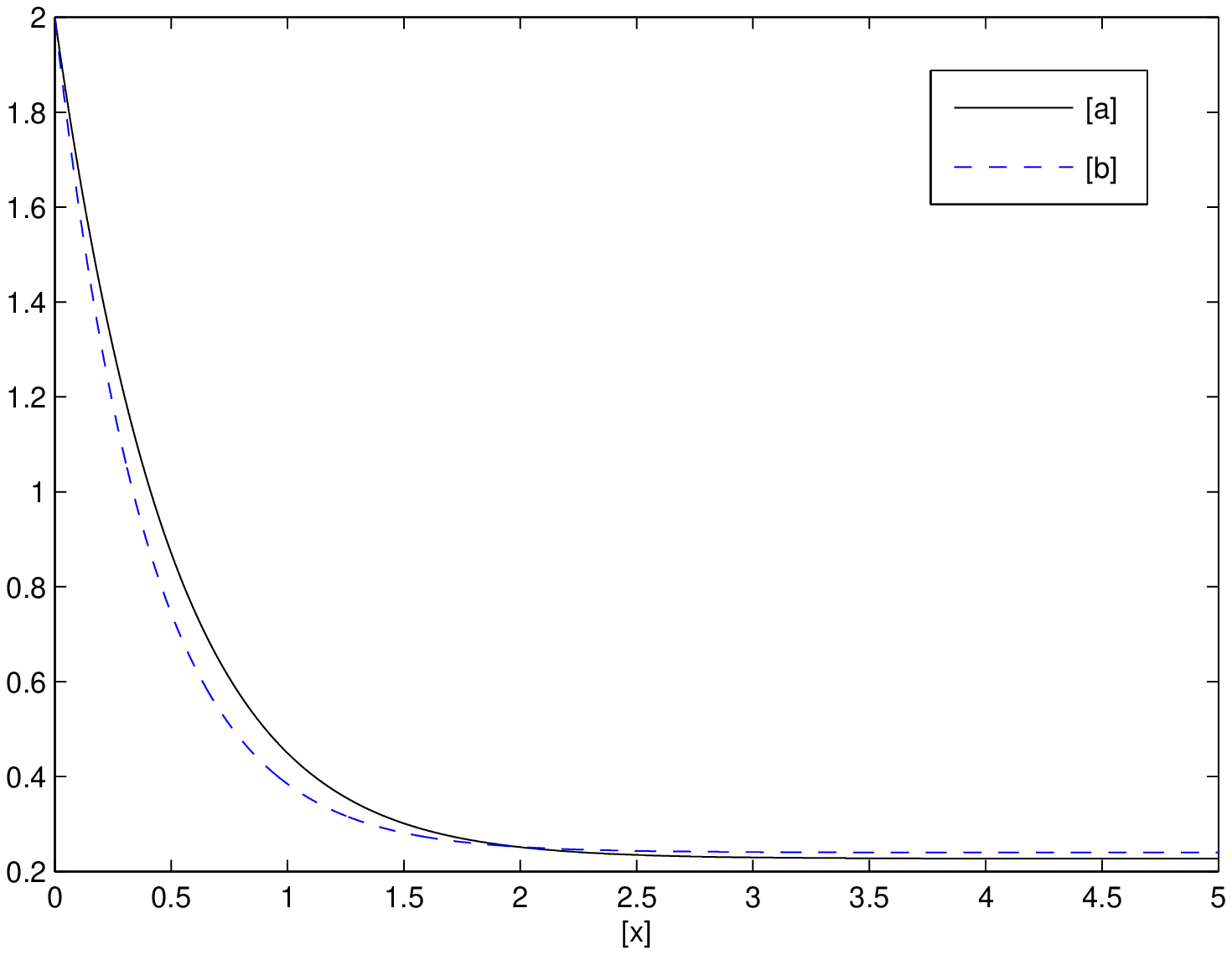}
         {
         \psfrag{[a]}[][][0.6]{\hspace{-0.02in}$\overline{e}$}
         \psfrag{[b]}[][][0.6]{\hspace{0.03in}${e}_L$}
         \psfrag{[x]}{}
         \psfrag{[y]}{\tiny$$}
         }
         \put(-62,-4){\scalebox{0.6}{$t[s]$}}
         }
         \caption{Comparisons of estimation errors of the proposed observer and that of a  Luenberger observer for different measurement noises with $N=2$.}\label{fig:prop2_m_0}
     \end{figure}
}{
    \begin{figure}[!h]
         \centering
         \subfigure[Estimation errors ($m(t)\equiv 0$) for Luenberger observer and the interconnected observers ($N=2$).]{
         \label{fig:m0_trajs}
         \psfragfig[trim=1.6cm 0.8cm 1.3cm 0.7cm,clip, width=0.3\textwidth]{JFigprop2m0}
         {
         \psfrag{[x]}{\hspace{0.02in}\tiny$t[s]$}
         \psfrag{[y]}{$$}
         \psfrag{[a]}{\hspace{-0.02in}\tiny$\overline{e}$}
         \psfrag{[b]}{\hspace{-0.02in}\tiny$e_L$}
         }
         }
         \quad
         \subfigure[Difference between estimation errors of Luenberger observer and that of the interconnected observers ($N=2$ and $m(t)\equiv 0$).]{
         \psfragfig[trim=1.5cm 0.8cm 1.3cm 0.7cm,clip,width=0.3\textwidth]{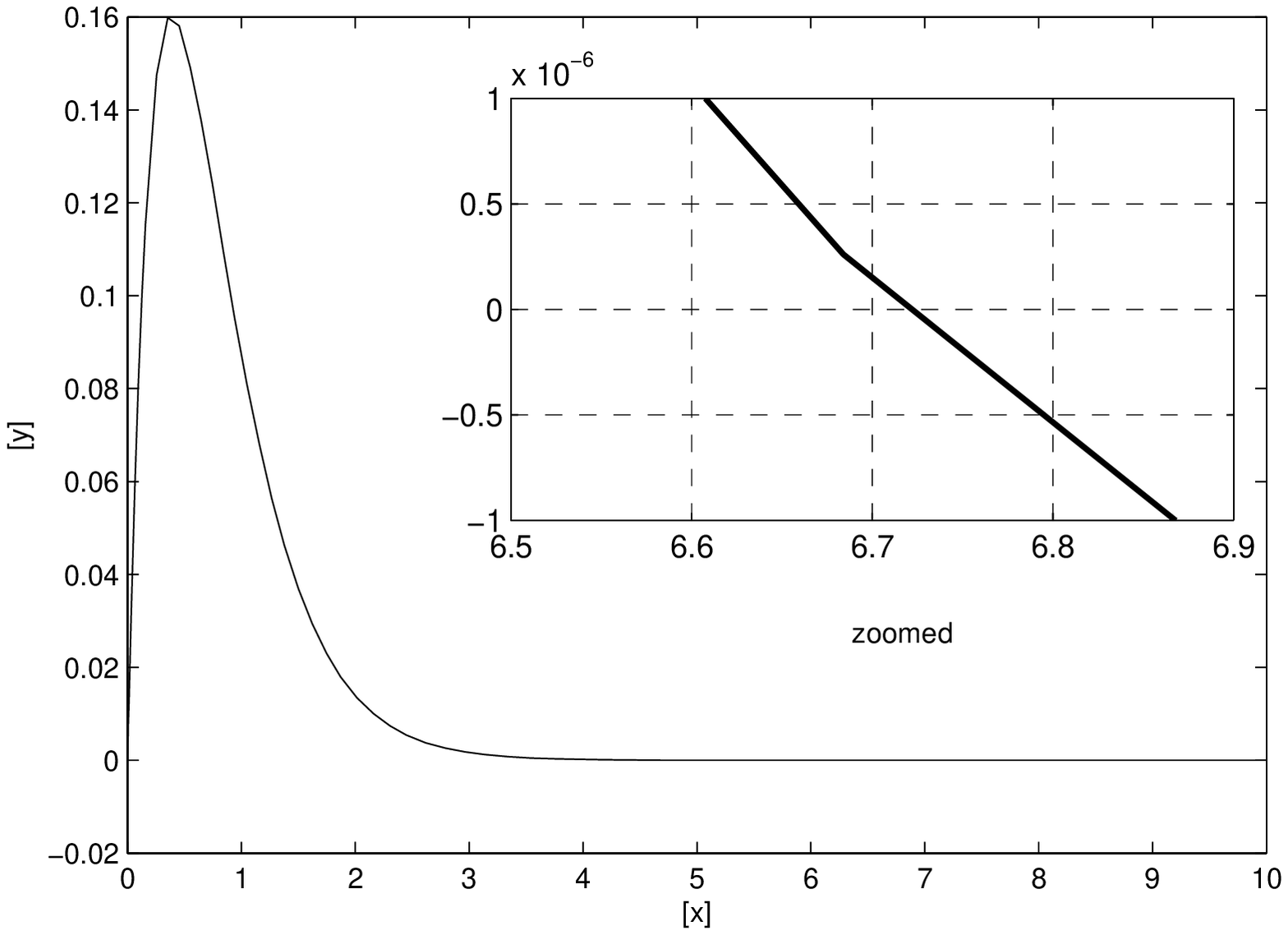}
         {
         \psfrag{[x]}{\hspace{0.02in}\tiny$t[s]$}
         \psfrag{[y]}{\tiny$\overline{e}-{e}_L$}
         \psfrag{zoomed}{\tiny{zoomed}}
         }
         }
\quad
         \subfigure[Estimation errors ($m(t)\equiv 0.3$) for Luenberger observer and the interconnected observers ($N=2$).]{
         \label{fig:m03_trajs}
         \psfragfig[trim=1.6cm 0.8cm 1.3cm 0.7cm,clip,width=0.3\textwidth]{JFigprop2m03}
         {
         \psfrag{[a]}{\hspace{-0.02in}\tiny$\overline{e}$}
         \psfrag{[b]}{\hspace{-0.02in}\tiny${e}_L$}
         \psfrag{[x]}{\hspace{0.015in}\tiny$t[s]$}
         \psfrag{[y]}{\tiny$$}
         }
         }
         \quad
         \subfigure[Difference between estimation errors of Luenberger observer and that of the interconnected observers ($N=2$, $m(t)\equiv 0.3$).]{
         \label{fig_prop2_m03b}
	 \psfragfig[trim=1.5cm 0.8cm 1.3cm 0.7cm,clip, width=0.3\textwidth]{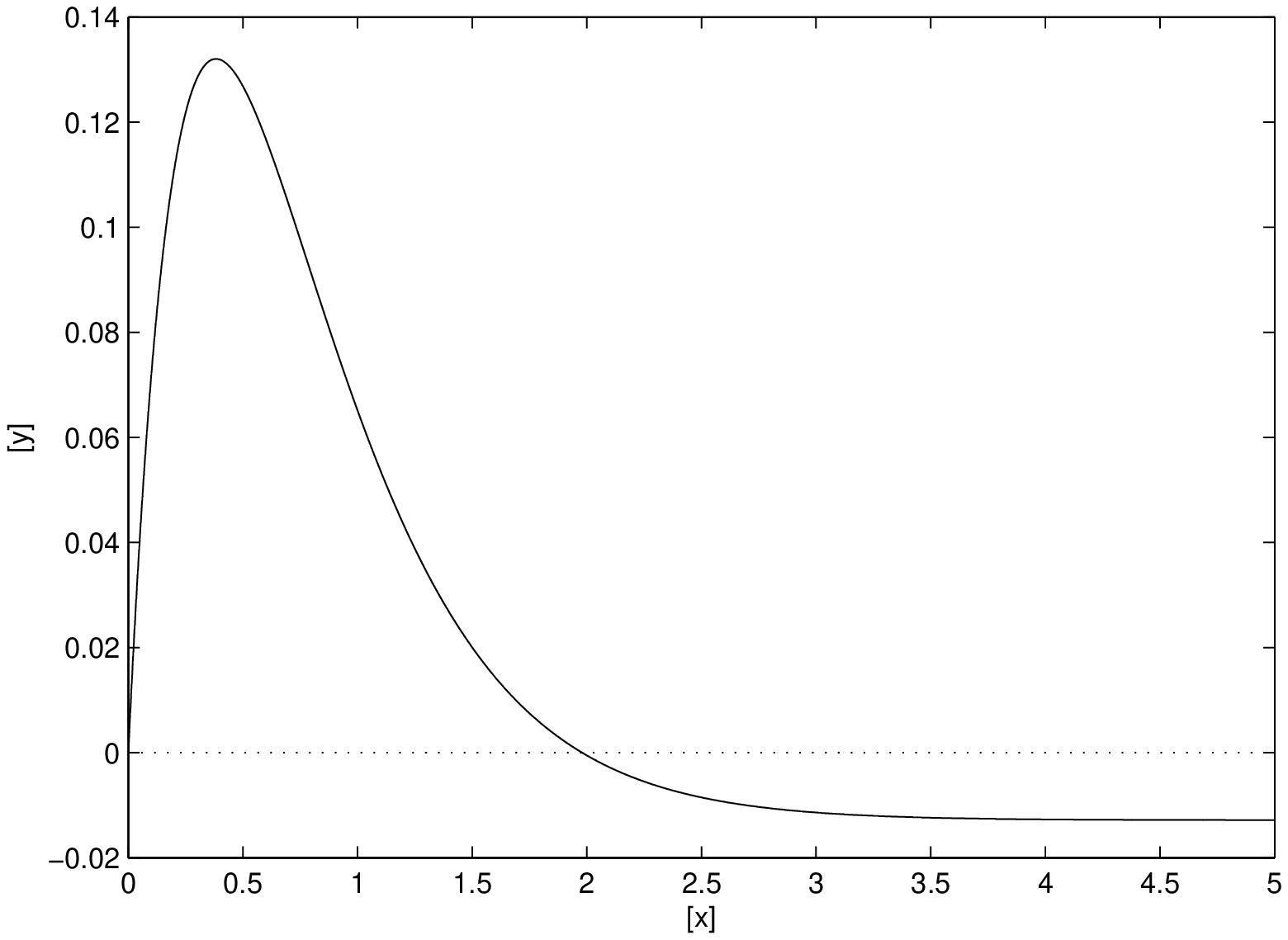}
	 {
         \psfrag{[x]}{\hspace{0.015in}\tiny$t[s]$}
         \psfrag{[y]}{\tiny$\overline{e}-\overline{e}_L$}
         }
         }   
         \quad
         \subfigure[Tail of estimation errors with low frequency sinusoid noise.]{
         \psfragfig[trim=1.6cm 0.5cm 1.3cm 0.7cm,clip, width=0.3\textwidth]{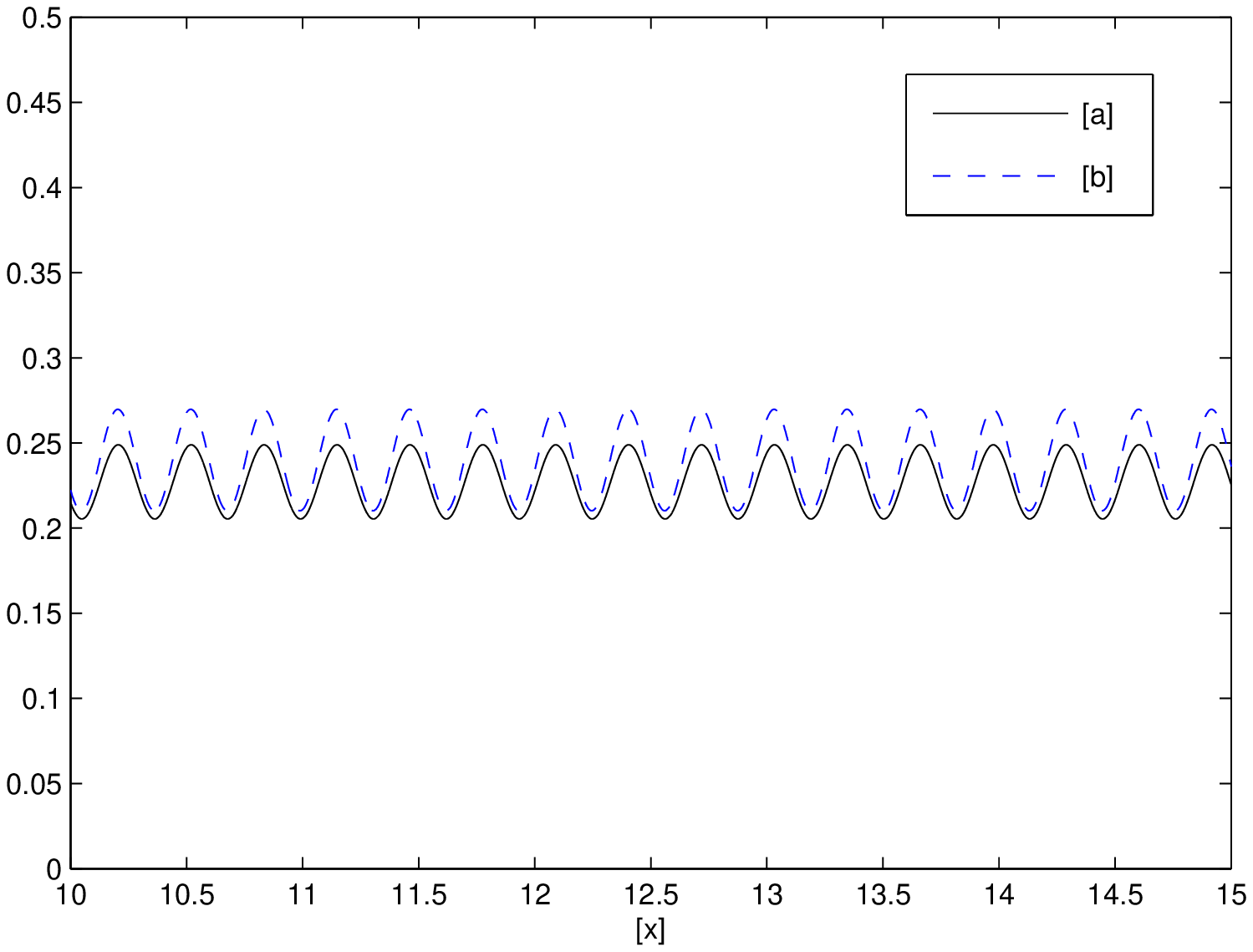}
         {
         \psfrag{[a]}{\hspace{-0.02in}\tiny$\overline{e}$}
         \psfrag{[b]}{\hspace{-0.02in}\tiny${e}_L$}
         \psfrag{[x]}{\hspace{0.01in}\tiny$t[s]$}
         \psfrag{[y]}{\tiny$$}
         }
         \label{fig:prop2_msin_lf}
         }
         \quad
         \subfigure[Tail of estimation errors with high frequency sinusoid noise.]{
         \label{fig:prop2_msin_hf}
         \psfragfig[trim=1.6cm 0.5cm 1.3cm 0.7cm,clip, width=0.3\textwidth]{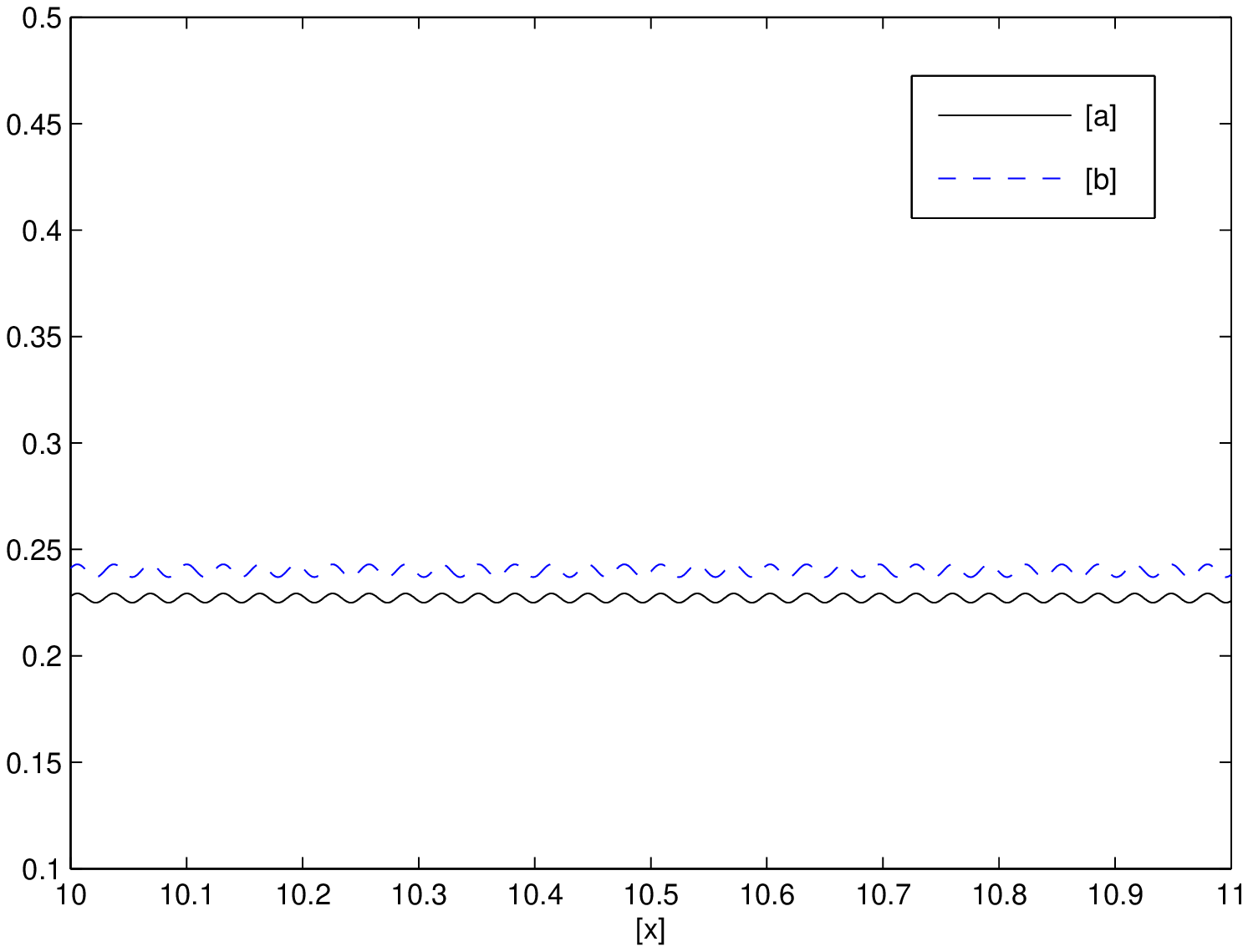}
         {
         \psfrag{[a]}{\hspace{-0.02in}\tiny$\overline{e}$}
         \psfrag{[b]}{\hspace{-0.02in}\tiny${e}_L$}
         \psfrag{[x]}{\hspace{-0.2in}\tiny$t[s]$}
         \psfrag{[y]}{\tiny$$}
         }
         }         
         \caption{Comparisons of estimation errors of the proposed observer and that of a  Luenberger observer for different measurement noises.}\label{fig:prop2_m_0}
     \end{figure} }
Simulation results for $m(t) \equiv 0.3$ are shown in Figure~\ref{fig:m03_trajs}. The behavior of the interconnected observers with constant noise is similar to that of with zero noise. It is worth to note that there is an improvement of the steady-state error by the interconnected observers since $\overline{e}^\star = 0.2272$, while the Luenberger observer gives $e_L^\star = 0.2400$. As shown in Figure~\ref{fig:m03_trajs}, at around $t \approx 2s$, $\overline{e}$ becomes closer to $0$ than $\overline{e}_L$ thereafter.  
To further explore the performance  of the interconnected observers, {we}
also consider  measurement noise with different frequencies,  {\itshape i.e.}, a low frequency noise $m(t)= 0.3 + 0.3\sin(20t)$ and a high frequency noise $m(t)= 0.3 + 0.3\sin(200t)$. \NotForJtwoC{In order to clearly  determine the properties of the interconnected observers, the tails of the simulations are shown in Figure~\ref{fig:prop2_msin_lf} and Figure~\ref{fig:prop2_msin_hf}, respectively.} The advantage of the interconnected observers lies on the properties of damped oscillatory behavior and smaller mean value of estimation error. Specifically, a numerical  comparison of the estimation errors after transient is reported in the first two columns of Table~\ref{tab:frequency}, which confirm the improvements guaranteed by the interconnected observers. 
\IfJtwoC{
\begin{table}[!ht]
\setlength\tabcolsep{2.8pt}
\centering
\caption{Comparison of estimation error ($\bar{e}$) of the observers with measurement noise of different frequencies.}
\begin{tabular}{|l|c|c|c|c|c|c|}
\hline
\multirow{2}{*}{\scriptsize observer type} & \multicolumn{2}{|c|} {\scriptsize low freq. noise}  &\multicolumn{2}{|c|} {\scriptsize high freq. noise}  & \multicolumn{2}{|c|} {\scriptsize $H_\infty$ from $m$ to $\bar e$} \\ \cline{2-7}
 & {\scriptsize mean $\bar{e}$} & {\scriptsize std $\bar{e}$} & {\scriptsize mean $\bar{e}$} & {\scriptsize std $\bar e$} & {\scriptsize Thm.~\ref{thm:basedonprops}}  & {\scriptsize Thm.~\ref{thm:optimization_Hinf_fixedstructure}} \\ \hline
{\scriptsize Luenberger's}  & 0.2419 & 0.0211 & 0.2395 & 0.0022  & 0.8000  & 0.8000 \\ \hline
{\scriptsize Interconnected}& 0.2286 & 0.0154 & 0.2268 & 0.0016  & 0.7572  & 0.4953 \\ \hline
{\scriptsize Improved (\%)} & 5.5    & 27.0   & 5.3    & 27.3    & 5.4   & 38.1   \\ \hline
\end{tabular}
\label{tab:frequency}
\end{table}
}{
\begin{table}
\centering
\caption{Comparison of estimation error ($\bar{e}$) of the observers with measurement noise of different frequencies.}
\begin{tabular}{|l|c|c|c|c|c|c|}
\hline
\multirow{2}{*}{observer type} & \multicolumn{2}{|c|} {low freq. noise}  &\multicolumn{2}{|c|} {high freq. noise}  & \multicolumn{2}{|c|} {$H_\infty$ gain from $m$ to $\bar e$} \\ \cline{2-7}
 & {\scriptsize mean $\bar{e}$} & {\scriptsize std $\bar{e}$} & {\scriptsize mean $\bar{e}$} & {\scriptsize std $\bar e$} & {\scriptsize Thm.~\ref{thm:basedonprops}}  & {\scriptsize Thm.~\ref{thm:optimization_Hinf_fixedstructure}} \\ \hline
Luenberger       & 0.2419 & 0.0211 & 0.2395 & 0.0022  & 0.8000  & 0.8000 \\ \hline
interconnected   & 0.2286 & 0.0154 & 0.2268 & 0.0016  & 0.7572  & 0.4953 \\ \hline
improvement (\%) & 5.5    & 27.0   & 5.3    & 27.3    & 5.4     & 38.1   \\ \hline
\end{tabular}
\label{tab:frequency}
\end{table} }
\end{example}
\IfJtwoC{\vspace{-9pt}}{}
\subsection{Design via feasibility/optimization problems}
\label{sec:Design}
The interconnected observers in \eqref{eq:graph_individual} can be designed by solving feasibility and optimization problems that minimize the $H_\infty$ gain of the transfer function from measurement noise $m$ to estimation error $\bar{e}$ (global) or $\bar e_i$ (local) under the rate of convergence constraint. 
To formulate such problems, following \cite{Scherer.1997.multiobjectiveLMI}, the error system in \eqref{eq:general_error_compact_graph}
 is rewritten as
      \begin{flalign}
      \begin{split}
               \dot{ e} = A_e {e} + u, \quad
               y_e = C_e e + m, \quad
               z_\infty = {\cal X} e,
       \end{split}\label{eq:synthesis_model}
       \end{flalign}
where $A_e= I_N \otimes A,\ C_e = -I_N \otimes C$, and the ``input'' $u$ is assigned via $u = M_u y_e$ with $M_u = {\cal K}*G^\top$. 
Note that $z_\infty$ denotes the overall estimation error (or the local estimation error) of the interconnected observers, {\itshape i.e.}, $z_\infty=\overline{e}$ with ${\cal X} = {\cal C}$ (or $z_\infty=\overline{e}_i$ with ${\cal X} = {\cal C}_i$). In the $s$-domain, the transfer function from $m$ to $z_\infty$ for \eqref{eq:synthesis_model} can be written as 
\begin{align}
T(s) = {\cal X} \big( sI - {\cal A}\big)^{-1} {\cal B} + {\cal D} \label{eq:tf_closedsys},
\end{align}
where ${\cal A} = A_e+M_uC_e, {\cal B} = M_u$, and ${\cal D} = 0$. 
Within this setting, feasibility ({\it i.e.}, inequalities) and optimization problems
associated with the design of the interconnected observers are formulated in the following sections.

\subsubsection{Rate of convergence and $H_\infty$ gain in terms of matrix inequalities} 
To guarantee that the rate of convergence of the interconnected observers is better (or no worse) than that of a Luenberger observer, the eigenvalues of the error system in \eqref{eq:general_error_compact_graph} will be assigned to the left of the vertical line at $-\sigma$ in the $s$-plane, where $\sigma$ is the rate of convergence for the Luenberger observer.
Following \cite{Chilali.1996.HwithPole}, the error system \eqref{eq:general_error_compact_graph} has all eigenvalues located to the left of $-\sigma$ on the $s$-plane if and only if there exists a matrix $P_S$ such that
      \begin{flalign}
      \begin{split}
               &\mbox{\rm He}({\cal A}, P_S) + 2 \sigma P_S < 0,\  
               P_S = P_S^\top>0.
      \end{split}\label{cons_region}
      \end{flalign}
{Note} that \eqref{cons_region} is nonlinear because of the cross term $P_S({\cal K}*G^\top)$ obtained when expanding $P_S {\cal A}$. 
The following theorem provides an equivalent linear formulation and a sufficient condition for \eqref{cons_region}.
\begin{proposition}
\label{prop:separation}
Condition \eqref{cons_region} is satisfied if 
\begin{enumerate}[a)]
\item and only if \
$
                  \mbox{\rm He}(A_{e}, P_S) + C_e^\top M_p^\top + M_p C_e + 2\sigma P_S \!<\! 0,\\ 
                  P_S= P_S^\top >0,
$
              in which case $M_u = P_S^{-1}M_p$;
\item the graph is all-to-all connected and there exists $h_1,h_2\in \mathbb{R}$ such that the following conditions hold:
\begin{enumerate}[{b.}1)]
\item  $h_1+h_2\geq  \sigma $;
\item  $P_i=P_i^\top > 0$ for each $i\in {\cal V}$ 
\item  $\mbox{\rm He}((A-K_{ii}C), P_i)+2h_1 P_i<0$ for each $i\in{\cal V}$;
\item  $\!\! \displaystyle 
                    \left[ \begin{array}{cccc}
                     2h_2P_1  &  S_{12}  &  \cdots  &  S_{1N} \\ 
                     S_{12}^\top  &  2h_2P_2  & \cdots & S_{2N} \\ 
                     \vdots & \vdots & \ddots & \vdots \\ 
                      S_{1N}^\top & S_{2N}^\top & \cdots & 2h_2P_N 
                    \end{array}\right]
                    <  0$, 
          where $S_{ij} = -(K_{ji}C)^\top P_j-P_iK_{ij}C$.
\end{enumerate}
\end{enumerate} 
\end{proposition}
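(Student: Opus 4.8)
The plan is to handle the two parts separately; both reduce to rewriting the Lyapunov-type inequality \eqref{cons_region} in which ${\cal A}=A_e+M_uC_e$ as in \eqref{eq:synthesis_model}.

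For part a), I would first substitute ${\cal A}=A_e+M_uC_e$ into \eqref{cons_region} and use $P_S=P_S^\top$ to obtain
\[
\mbox{\rm He}(A_e,P_S)+C_e^\top M_u^\top P_S+P_S M_u C_e+2\sigma P_S<0.
\]
The only obstruction to linearity in the decision variables is the bilinear product $P_S M_u$. Since $P_S>0$ is invertible, the change of variables $M_p:=P_S M_u$ (equivalently $M_u=P_S^{-1}M_p$) is a bijection, and it turns $P_S M_u C_e$ into $M_p C_e$ and, using $P_S=P_S^\top$, its transpose into $C_e^\top M_p^\top$. This yields exactly the inequality claimed in a), and because the substitution is invertible the ``if and only if'' holds, with the gain recovered as $M_u=P_S^{-1}M_p$. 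This part is purely algebraic with no real obstacle.

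For part b), the key is to specialize ${\cal A}$ to the all-to-all graph and to restrict the certificate $P_S$ to block-diagonal form. When $G$ is all-to-all with every node self-connected, every entry of $G^\top$ equals one, so the Khatri--Rao product collapses to ${\cal K}*G^\top={\cal K}$ and ${\cal A}=I_N\otimes A-{\cal K}(I_N\otimes C)$, whose $(i,i)$ block is $A-K_{ii}C$ and whose $(i,j)$ block is $-K_{ij}C$ for $i\neq j$. I would then take $P_S=\mathrm{diag}(P_1,\dots,P_N)$, which is positive definite precisely under b.2). A direct block computation of $\Phi:=\mbox{\rm He}({\cal A},P_S)+2\sigma P_S$ gives diagonal blocks $\mbox{\rm He}(A-K_{ii}C,P_i)+2\sigma P_i$ and off-diagonal blocks $-C^\top K_{ji}^\top P_j-P_iK_{ij}C$, which is exactly $S_{ij}$ as defined in b.4); the symmetric counterpart equals $S_{ij}^\top$ after using $P_i=P_i^\top$.

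The final step is an additive splitting of $\Phi$. Let $\Phi_1$ denote the block-diagonal matrix with blocks $\mbox{\rm He}(A-K_{ii}C,P_i)+2h_1P_i$, and let $\Phi_2$ denote the full block matrix in b.4), with diagonal blocks $2h_2P_i$ and off-diagonal blocks $S_{ij}$. A term-by-term comparison yields the identity $\Phi=\Phi_1+\Phi_2-2(h_1+h_2-\sigma)P_S$. Condition b.3) makes $\Phi_1<0$ (block-diagonal with each diagonal block negative definite); condition b.4) makes $\Phi_2<0$, so $\Phi_1+\Phi_2<0$; and b.1) together with $P_S>0$ makes $2(h_1+h_2-\sigma)P_S\geq 0$. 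Hence $\Phi\leq\Phi_1+\Phi_2<0$, so \eqref{cons_region} holds with this block-diagonal $P_S$, establishing sufficiency. The only delicate point, and the one I expect to require the most care, is the block/transpose bookkeeping needed to verify that the off-diagonal blocks of $\mbox{\rm He}({\cal A},P_S)$ match $S_{ij}$ and $S_{ij}^\top$ exactly; everything else is a sum-of-definite-matrices argument.
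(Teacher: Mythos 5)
Your proposal is correct and takes essentially the same route as the paper: part a) is the identical change of variables $M_p = P_S M_u$, and part b) uses the same block-diagonal certificate $P_S=\mbox{\rm diag}(P_1,\dots,P_N)$ for the all-to-all graph, with conditions b.3) and b.4) summed and the slack $h_1+h_2\geq\sigma$ absorbed at the end. Your splitting identity $\Phi=\Phi_1+\Phi_2-2(h_1+h_2-\sigma)P_S$ is just an explicit restatement of the paper's step of adding the two LMIs and invoking b.1), so there is no substantive difference.
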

\begin{proof}
Letting $M_p=P_S M_u$, and using the definition of $\cal A$, inequality \eqref{cons_region} can be written as
               \begin{align*}
                  & \mbox{\rm He}(A_{e}, P_S) + C_e^\top M_p^\top +M_p C_e +2\sigma P_S < 0,
               \end{align*} 
with $P_S= P_S^\top >0$. This proves item $a)$.
Now, assuming $b.1)$-$b.4)$ with $h_1,h_2\in \mathbb{R}$, note that the inequalities in $b.3)$ can be rewritten as 
\IfJtwoC{
\begin{align}
\setlength{\arraycolsep}{2.5pt}
\mbox{\rm diag}(Q_1,\dots,Q_N) + \mbox{\rm diag}(2h_1P_1,\dots,  2h_1 P_N) < 0,  \label{neq:region_part_Q}               
\end{align}
}{
\begin{align}
                    \left[ \begin{array}{cccc}
                     Q_1 & 0      & \cdots & 0\\
                     0      & Q_2 & \cdots & 0\\
                     \vdots  & \vdots & \ddots & \vdots\\
                     0      & 0 & \cdots & Q_N 
                    \end{array} \right] +
                     \left[ \begin{array}{cccc}
                     2h_1 P_1 & 0 & \cdots & 0\\
                     0  & 2h_1 P_2 & \cdots & 0\\
                     \vdots  & \vdots & \ddots & \vdots\\
                     0  & 0 & \cdots & 2h_1 P_N 
                    \end{array} \right]  <0,  \label{neq:region_part_Q}               
\end{align}    }    
with $Q_i = \mbox{\rm He}((A-K_{ii}C), P_i)$ for each $i\in{\cal V}$. By $b.2)$, symmetry of the inequalities \eqref{neq:region_part_Q}  and $b.4)$, and the definition of negative symmetric matrices, the sum of the left terms of \eqref{neq:region_part_Q} and $b.4)$ satisfies 
\IfJtwoC{
\begin{align}
\setlength{\arraycolsep}{2.5pt}
                    \!\!\!\!\!\left[\!\! \begin{array}{cccc}
                     Q_1 & S_{12} & \cdots & S_{1N}\\
                     S_{12}^\top  & Q_2 & \cdots & S_{2N} \\
                     \vdots & \vdots & \ddots & \vdots \\
                     S_{1N}^\top & S_{2N}^\top & \cdots & Q_N
                    \end{array} \right] \!\!+\!\!
                     \left[ \begin{array}{cccc}
                     2 \bar h P_1 & 0 & \cdots & 0\\
                     0  & 2\bar{h} P_2 & \cdots & 0\\
                     \vdots & \vdots & \ddots & \vdots \\
                     0 & 0 & \cdots & 2\bar{h} P_N
                    \end{array} \right]  \!\!<\!\!0,  \label{neq:region_part_QR}               
\end{align}
with 
$\bar{h} = h_1 + h_2$.
}{
\begin{align}
                    \left[ \begin{array}{cccc}
                     Q_1 & S_{12} & \cdots & S_{1N}\\
                     S_{12}^\top  & Q_2 & \cdots & S_{2N} \\
                     \vdots & \vdots & \ddots & \vdots \\
                     S_{1N}^\top & S_{2N}^\top & \cdots & Q_N
                    \end{array} \right] +
                     \left[ \begin{array}{cccc}
                     2(h_1+h_2) P_1 & 0 & \cdots & 0\\
                     0  & 2(h_1+h_2) P_2 & \cdots & 0\\
                     \vdots & \vdots & \ddots & \vdots \\
                     0 & 0 & \cdots & 2(h_1+h_2) P_N
                    \end{array} \right]  <0,  \label{neq:region_part_QR}               
\end{align}             
with $S_{ij} = -(K_{ji}C)^\top P_j -P_iK_{ij}C $ for $i,j\in {\cal V}$ and $j\neq i$.} Since $h_1+h_2\geq  \sigma $, \eqref{cons_region} is satisfied with 
$P_D=\mbox{\rm diag}(P_1,\dots,P_N)$.
\end{proof}
\begin{proposition}
\label{prop:linearized_two_neqs}
Conditions $b.1)$-$b.4)$ in Proposition~\ref{prop:separation} hold if and only if there exist $h_1,\ h_2,\in\mathbb{R}, \ Y_i,\ W_{ij},\ P_i$ for $i,j\in {\cal V}$ and $j\neq i$ such that:
\begin{enumerate}[a)]
\item  $h_1+h_2 \geq \sigma$,
\item  $P_i=P_i^\top > 0$, for each $i\in{\cal V}$,
\item  $\mbox{\rm He}(A, P_i) - C^\top Y_i^\top-Y_i C+2h_1 P_i<0$, for each $i\in{\cal V}$,
\item  $\displaystyle 
                     \left[ \begin{array}{cccc}
                     2h_2P_1  & R_{12} & \cdots & R_{1N}\\
                     R_{21} & 2h_2P_2 & \cdots & R_{2N}\\
                     \vdots & \vdots & \ddots & \vdots \\
                     R_{N1}  &  R_{N2} & \cdots & 2h_2P_N
                    \end{array} \right] < 0$, 
       \IfJtwoC{\\}{}
       where $R_{ij} = -C^\top W_{ji}^\top - W_{ij}C$. 
\end{enumerate}
 The conditions $b.3)$-$b.4)$ in Proposition~\ref{prop:separation} hold  with $K_{ii} = P_i^{-1} Y_i$ and $K_{ij} = P_i^{-1}W_{ij}$ for $i,j\in {\cal V}$, $j \neq i$. 
\end{proposition}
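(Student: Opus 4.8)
The plan is to prove the equivalence by a linearizing change of variables, the standard device for removing the bilinear products of Lyapunov matrices and gains from a matrix inequality. The crucial structural observation is that conditions $b.3)$ and $b.4)$ of Proposition~\ref{prop:separation} are bilinear only through the products $P_i K_{ii}$ and $P_i K_{ij}$, while $b.1)$ and $b.2)$ coincide verbatim with $a)$ and $b)$ and require no manipulation. Since each $P_i = P_i^\top > 0$ is invertible, the maps $K_{ii} \mapsto Y_i := P_i K_{ii}$ and $K_{ij} \mapsto W_{ij} := P_i K_{ij}$ are bijections for fixed $P_i$, with inverses $K_{ii} = P_i^{-1} Y_i$ and $K_{ij} = P_i^{-1} W_{ij}$. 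This bijection is what carries the ``if and only if'' through both directions simultaneously.

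First I would expand $b.3)$. Using the definition $\mbox{\rm He}(X,Y) = X^\top Y + Y^\top X$ together with the symmetry $P_i = P_i^\top$, one obtains
\begin{align*}
\mbox{\rm He}(A - K_{ii}C,\, P_i) = \mbox{\rm He}(A, P_i) - C^\top K_{ii}^\top P_i - P_i K_{ii}C .
\end{align*}
Substituting $Y_i = P_i K_{ii}$, so that $Y_i^\top = K_{ii}^\top P_i$, turns the right-hand side into $\mbox{\rm He}(A, P_i) - C^\top Y_i^\top - Y_i C$, and $b.3)$ becomes exactly condition $c)$. In the same way, the off-diagonal blocks $S_{ij} = -C^\top K_{ji}^\top P_j - P_i K_{ij}C$ appearing in $b.4)$ become, after substituting $W_{ji} = P_j K_{ji}$ and $W_{ij} = P_i K_{ij}$, precisely $R_{ij} = -C^\top W_{ji}^\top - W_{ij}C$; the diagonal blocks $2h_2 P_i$ are untouched, so $b.4)$ becomes condition $d)$.

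With these identities in hand both implications follow at once. For the forward direction, given $P_i, K_{ii}, K_{ij}, h_1, h_2$ satisfying $b.1)$--$b.4)$, I define $Y_i := P_i K_{ii}$ and $W_{ij} := P_i K_{ij}$ and read off that $a)$--$d)$ hold. For the converse, given $P_i, Y_i, W_{ij}, h_1, h_2$ satisfying $a)$--$d)$, invertibility of $P_i$ lets me set $K_{ii} := P_i^{-1} Y_i$ and $K_{ij} := P_i^{-1} W_{ij}$, and the same identities recover $b.1)$--$b.4)$; this simultaneously establishes the final claim that $b.3)$--$b.4)$ hold with exactly these gains.

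There is no deep obstacle here: the result is a bookkeeping exercise in the congruence/substitution technique of \cite{Scherer.1997.multiobjectiveLMI,Chilali.1996.HwithPole}. The only points demanding care are, first, using $P_i = P_i^\top$ when pushing $P_i$ through $\mbox{\rm He}$ so that $C^\top K_{ii}^\top P_i = C^\top Y_i^\top$ and $C^\top K_{ji}^\top P_j = C^\top W_{ji}^\top$ come out with the correct transposes; and second, confirming that the block inequalities in $b.4)$ and $d)$ are genuinely symmetric and describe the same matrix, which reduces to the elementary identities $S_{ij}^\top = S_{ji}$ and $R_{ij}^\top = R_{ji}$, so that the transposed index pattern $S_{ij}$ versus $S_{ij}^\top$ in $b.4)$ matches the pattern $R_{ij}$ versus $R_{ji}$ in $d)$ after substitution.
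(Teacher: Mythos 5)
Your proof is correct and follows essentially the same route as the paper: the linearizing change of variables $Y_i = P_i K_{ii}$, $W_{ij} = P_i K_{ij}$, under which $b.3)$ and $b.4)$ rewrite verbatim as conditions $c)$ and $d)$. You are in fact more careful than the paper's own proof, which only states the substitution in one direction; your explicit use of the bijection (via invertibility of $P_i$) and your check that $S_{ij}^\top = S_{ji}$ matches $R_{ij}^\top = R_{ji}$ properly justifies the ``if and only if.''
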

\begin{proof}
Let $Y_i = P_i K_{ii}$ and $W_{ij} = P_i K_{ij}$ for $i,j\in{\cal V}$ and $j\neq i$, then, $b.3)$-$b.4)$ in Proposition~\ref{prop:separation} can be rewritten as 
\begin{align*}
\mbox{\rm He}(A, P_i) - C^\top Y_i^\top-Y_i C+2h_1 P_i<0 
\end{align*}
for each $i\in {\cal V} $ and 
\begin{align*}
\left[\begin{array}{cccc}
2h_2P_1  &  R_{12} & \cdots & R_{1N}\\
R_{21} & 2h_2P_2 & \cdots & R_{2N}\\
\vdots & \vdots & \ddots & \vdots \\
R_{N1}  &  R_{N2} & \cdots & 2h_2P_N
\end{array}\right]<0,
\end{align*}
respectively. Therefore, $c)$ and $d)$ of Proposition~\ref{prop:linearized_two_neqs} hold.
\end{proof}

\subsubsection{Minimization of $H_\infty$ norm under rate of convergence constraint with fixed connectivity graph}
\IfJtwoC{W}{In this section, w}e consider the design of interconnected observer over a fixed digraph $\Gamma = ({\cal V}, {\cal E}, G)$. The design specifications of our interest are the rate of convergence and the $H_\infty$ gain from noise $m$ to estimation errors $\bar{e}$ or $e_i$\IfJtwoC{.}{, i.e., the ${\cal L}_2$ gain. In particular, to guarantee that the rate of convergence of the system \eqref{eq:general_error_compact_graph} is better (or no worse) than that of a single Luenberger observer as in \eqref{eq:singleob}, the eigenvalues of the error system \eqref{eq:general_error_compact_graph} will be assigned to the left of the vertical line at $-\sigma$ in the $s$-plane, where $\sigma$ is the convergence rate for the Luenberger observer. }
\begin{theorem}
\label{thm:optimization_Hinf_fixedstructure}
Given a plant as in \eqref{eq:plant} and a digraph $\Gamma$,  the rate of convergence is larger than or equal to $\sigma$ and the global $H_\infty$ gain (respectively, the local $H_\infty$ gain) from $m$ to estimation error $\bar{e}$ in \eqref{eq:general_error_compact_graph} (respectively, $\bar{e}_i$ in \eqref{eq:error_individual}) is minimized if and only if there exist matrices ${\cal K}$,  $P_S$, and $P_H$ such that the following optimization problem has a solution:  
\begin{subequations}
\begin{align}
&\inf \gamma \label{eq:synthesis_obj_fixedgraph}\\
 \mbox{s.t.}\ &\mbox{\rm He}({\cal A}, P_S) + 2 \sigma P_S <0, \label{eq:synthesis_cons_eigen_fixed} \\
&\left[\begin{array}{ccc}
\text{$\mbox{\rm He}$}({\cal A}, P_H) & P_H {\cal B} & {\cal X}^\top \\
{\cal B}^\top P_H             & -\gamma I         & 0 \\
{\cal X}                             & 0                      & -\gamma I
\end{array}\right] <0, \label{eq:synthesis_cons_hinf_fixed} \\
& P_S = P_S^\top >0,\ P_H = P_H^\top >0,
\end{align}
\label{eq:synthesis_cons}%
\end{subequations} 
where ${\cal X} = {\cal C}$ (respectively, ${\cal X} = {\cal C}_i$ and ${\cal C}_i$ is the sub-matrix of ${\cal C}$ from the $(in-n+1)$-th row to the $(in)$-th row).
\end{theorem}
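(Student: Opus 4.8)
The plan is to recognize the two design specifications as two well-known, separately certifiable matrix inequality conditions on the closed-loop data ${\cal A}$, ${\cal B}$, ${\cal X}$ of the transfer function $T$ in \eqref{eq:tf_closedsys}, and then to show that their simultaneous feasibility, with $\gamma$ minimized, is exactly equivalent to the stated design goal. The key structural point is that the rate specification and the $H_\infty$ specification are certified by \emph{distinct} Lyapunov matrices $P_S$ and $P_H$; this is what keeps the characterization exact rather than conservative. The only variable shared between the two inequalities is the gain ${\cal K}$ (equivalently $M_u = {\cal K}*G^\top$), which enters ${\cal A}$ and ${\cal B}$.

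First I would dispatch the rate-of-convergence specification. By the half-plane eigenvalue-placement result of Chilali and Gahinet already recorded in \eqref{cons_region}, the eigenvalues of ${\cal A}$ lie strictly to the left of the vertical line $\{\mbox{Re}(s) = -\sigma\}$ -- equivalently, the rate of convergence is at least $\sigma$ -- if and only if there exists $P_S = P_S^\top > 0$ with $\mbox{\rm He}({\cal A}, P_S) + 2\sigma P_S < 0$, which is precisely \eqref{eq:synthesis_cons_eigen_fixed}. In particular, for $\sigma>0$ this already forces ${\cal A}$ to be Hurwitz.

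Next I would handle the $H_\infty$ specification via the Bounded Real Lemma applied to $T(s) = {\cal X}(sI - {\cal A})^{-1}{\cal B} + {\cal D}$ with ${\cal D} = 0$. The lemma states that ${\cal A}$ is Hurwitz and $||T||_\infty < \gamma$ if and only if there exists $P_H = P_H^\top > 0$ satisfying the $3\times 3$ block inequality \eqref{eq:synthesis_cons_hinf_fixed}, whose $(1,1)$ block is $\mbox{\rm He}({\cal A},P_H)$ and whose off-diagonal blocks $P_H{\cal B}$ and ${\cal X}^\top$ come from the input and output matrices, the ${\cal D}$-dependent entries vanishing. Combining the two equivalences, for fixed ${\cal K}$ the smallest feasible $\gamma$ equals $||T||_\infty$ exactly, by strictness of the bound; hence infimizing $\gamma$ jointly over ${\cal K}$, $P_S$, $P_H$ subject to \eqref{eq:synthesis_cons_eigen_fixed}--\eqref{eq:synthesis_cons_hinf_fixed} returns the minimal $H_\infty$ gain attainable under the rate constraint. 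The local case is identical upon replacing ${\cal X} = {\cal C}$ by ${\cal X} = {\cal C}_i$, so that $T$ maps $m$ to $\bar e_i$.

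The step I expect to require the most care is the ``only if'' direction: one must verify that the infimum of $\gamma$ over the feasible set coincides with the achieved $H_\infty$ norm rather than being a mere upper bound. This follows from the tightness of the Bounded Real Lemma -- since $||T||_\infty < \gamma$ is strict, for any ${\cal K}$ meeting the rate requirement the admissible $\gamma$ can be driven down to $||T||_\infty$ -- together with the observation that employing independent certificates $P_S$ and $P_H$ adds no restriction beyond Hurwitzness of ${\cal A}$, which both specifications already guarantee. Because no common-Lyapunov-matrix ansatz is imposed, no conservatism enters and the equivalence is exact.
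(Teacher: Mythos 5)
Your proposal is correct and follows essentially the same route as the paper's proof: the rate constraint is certified exactly by the Chilali--Gahinet half-plane LMI \eqref{cons_region} with certificate $P_S$, the $H_\infty$ bound by the Bounded Real Lemma (the paper cites \cite[Theorem 2.41]{90.scherer.dssertation}) with an independent certificate $P_H$, and the two are combined with $\gamma$ infimized over ${\cal K}$, $P_S$, $P_H$, the local case following by replacing ${\cal C}$ with ${\cal C}_i$. Your added remark on tightness of the infimum (that separate certificates introduce no conservatism, so the optimal $\gamma$ equals the achieved norm) is a point the paper leaves implicit, but it is a refinement of the same argument rather than a different one.
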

\begin{proof}
From \cite[Theorem 2.41]{90.scherer.dssertation}, the $H_\infty$ gain for a system from input to output with realization $T_1(s) = C_1(sI-A_1)^{-1}B_1$ is less than or equal to $\gamma$ if and only if there exists some $P_H = P_H^\top>0$ such that
\begin{align}
\left[\begin{array}{ccc}
\mbox{\rm He}(A_1,P_H) & P_H B_1 & C_1^\top \\
B_1^\top P_H & -\gamma I & 0\\
C_1 & 0 & -\gamma I 
\end{array}\right] < 0,
\label{eq:general_Hinf}
\end{align}
Then, for system \eqref{eq:general_error_compact_graph} with $T(s) = {\cal C}(sI-{\cal A})^{-1}{\cal B}$, we have that the global $H_\infty$ gain from $m$ to $\bar{e}$ is less than or equal to $\gamma$ if and only if \eqref{eq:general_Hinf} holds with $A_1 = {\cal A}$, $B_1 = {\cal B}$ and $C_1={\cal C}$, which leads to \eqref{eq:synthesis_cons_hinf_fixed} with ${\cal X} = {\cal C}$. The same argument applies for $T_i(s) = {\cal C}_i(sI-{\cal A})^{-1}{\cal B}$ which leads to \eqref{eq:synthesis_cons_hinf_fixed} with ${\cal X} = {\cal C}_i$. Then, the proof finishes by adding constraint \eqref{cons_region}.
\end{proof}
\begin{remark} 
For a fixed connectivity graph, the optimization problem in \eqref{eq:synthesis_cons} can be solved offline. Moreover, due to the form of the observer at each node as in \eqref{eq:graph_individual}, the information needed by each agent is what the neighbors provide through the connectivity graph.
Therefore, the resulting observers for each agent are decentralized.
\end{remark}

Note that the optimization problem \eqref{eq:synthesis_cons} is not jointly convex over the variables ($P_S$, $P_H$, $M_u$).  Moreover, it is nonlinear because of the existence of cross terms $P_H M_u$ and $P_S M_u$. In order to remove the nonlinearities and make the two constraints jointly convex, following \cite{Scherer.1997.multiobjectiveLMI}, we reformulate the problem by seeking common solutions of $P_S$ and $P_H$, and changing variables to  $M_p := P M_u$.
Using item $a)$ of Proposition~\ref{prop:separation} to rewrite the terms $\mbox{\rm He}({\cal A},P_S)$ and $\mbox{\rm He}({\cal A},P_H)$ in \eqref{eq:synthesis_cons}, we have the following result.
\begin{theorem}
\label{tim:common_P_linearized}
Given a plant as in \eqref{eq:plant} and a graph $\Gamma$,  the rate of convergence is larger than or equal to $\sigma$ and the global $H_\infty$ gain (respectively, the local $H_\infty$ gain) from $m$ to estimation error $\bar{e}$ in \eqref{eq:general_error_compact_graph} (respectively, $\bar{e}_i$ in \eqref{eq:error_individual}) is minimized if there exist $M_p$ and $P$ such that the following optimization problem (LMI) is feasible:
      {\setlength\arraycolsep{2.5pt}
      \begin{flalign*}
      \begin{split}
            &\quad \inf \gamma \\
            &\textrm{s.t.:}
            \ \mbox{\rm He}(A_{e},P) + C_e^\top M_p^\top +  M_p  C_e +2\sigma P < 0, \\
            & \left[
              \begin{array}{ccc}
               \mbox{\rm He}(A_{e},P) + C_e^\top M_p +M_p^\top C_e    &  M_p  & {\cal X}^\top \\
                M_p^\top & -\gamma I & 0 \\
               {\cal X}           & 0 & -\gamma I
              \end{array}
              \right]
              <0,\\
             & P=P^\top>0,
      \end{split}
      \end{flalign*}   }%
where ${\cal X} = {\cal C}$ (respectively, ${\cal X} = {\cal C}_i$ and ${\cal C}_i$ is the sub-matrix of ${\cal C}$ from the $(in-n+1)$-th row to the $(in)$-th row).       
\end{theorem}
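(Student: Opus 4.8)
The plan is to obtain this LMI formulation from Theorem~\ref{thm:optimization_Hinf_fixedstructure} by two reductions applied in sequence. The first is to impose a \emph{common} certificate $P := P_S = P_H$ for both the pole-placement constraint \eqref{eq:synthesis_cons_eigen_fixed} and the $H_\infty$ constraint \eqref{eq:synthesis_cons_hinf_fixed}; the second is the linearizing change of variables $M_p := P M_u$ announced just before the statement. Recall from \eqref{eq:matrix_graph} and \eqref{eq:tf_closedsys} that ${\cal A} = A_e + M_u C_e$ with $M_u = {\cal K}*G^\top = {\cal B}$, so the nonlinearity in \eqref{eq:synthesis_cons} arises entirely from the products $P_S M_u$ and $P_H M_u$ buried inside $\mbox{\rm He}({\cal A},P_S)$ and $\mbox{\rm He}({\cal A},P_H)$. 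Eliminating these is the whole point of the reformulation.

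First I would expand $\mbox{\rm He}({\cal A},P)$ for a symmetric $P>0$. Since $\mbox{\rm He}({\cal A},P) = {\cal A}^\top P + P{\cal A} = \mbox{\rm He}(A_e,P) + C_e^\top (M_u^\top P) + (P M_u) C_e$, the substitution $M_p := P M_u$ (so that $M_p^\top = M_u^\top P$ by symmetry of $P$) turns this into $\mbox{\rm He}(A_e,P) + C_e^\top M_p^\top + M_p C_e$, which is affine in $(P,M_p)$. This is precisely item $a)$ of Proposition~\ref{prop:separation}, and it disposes of the rate-of-convergence constraint \eqref{eq:synthesis_cons_eigen_fixed} verbatim, producing the first displayed inequality of the theorem together with the relation $M_u = P^{-1}M_p$.

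Next I would insert the \emph{same} rewriting of $\mbox{\rm He}({\cal A},P)$ into the $H_\infty$ LMI \eqref{eq:synthesis_cons_hinf_fixed} with $P_H$ replaced by the common $P$. The $(1,1)$ block becomes the affine expression in $(P,M_p)$ just obtained; the off-diagonal block $P_H{\cal B} = P M_u$ becomes $M_p$ and its transpose $M_p^\top$; the blocks containing ${\cal X}$ and $-\gamma I$ are left unchanged because they do not involve $M_u$. This yields the $3\times 3$ block LMI in the statement, now affine in $(P,M_p,\gamma)$, so the two constraints together with $P=P^\top>0$ constitute a jointly convex feasibility problem and $\inf\gamma$ is a genuine semidefinite program. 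The selection ${\cal X} = {\cal C}$ versus ${\cal X} = {\cal C}_i$ then picks out the global versus local $H_\infty$ objective exactly as in Theorem~\ref{thm:optimization_Hinf_fixedstructure}.

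The step I expect to carry the real weight is justifying why the conclusion is only sufficient (``if'') rather than necessary and sufficient. Both Proposition~\ref{prop:separation}$a)$ and the change of variables $M_p = P M_u$ are exact bijections for a fixed invertible $P$, so no conservatism enters there; the loss of the converse comes solely from forcing $P_S = P_H$. Theorem~\ref{thm:optimization_Hinf_fixedstructure} certifies the two specifications with possibly distinct matrices, and restricting to a single common $P$ shrinks the feasible set. I would make this explicit by checking the forward direction only: any feasible point $(P,M_p,\gamma)$ of the present LMI yields, via $M_u = P^{-1}M_p$, a gain ${\cal K}$ read off from $M_u = {\cal K}*G^\top$ that satisfies both constraints of \eqref{eq:synthesis_cons} with $P_S = P_H = P$, hence meets the rate and $H_\infty$ specifications by Theorem~\ref{thm:optimization_Hinf_fixedstructure}. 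The reverse implication may fail, which is exactly why the statement reads ``is minimized if.''
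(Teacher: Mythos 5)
Your proposal is correct and matches the paper's (largely implicit) argument exactly: the paper obtains this theorem by applying item $a)$ of Proposition~\ref{prop:separation} to rewrite $\mbox{\rm He}({\cal A},P_S)$ and $\mbox{\rm He}({\cal A},P_H)$ in \eqref{eq:synthesis_cons} with a common $P$ and the change of variables $M_p := P M_u$, recovering the gain as $M_u = P^{-1}M_p$, and its subsequent remark makes the same point you do---that the only source of conservatism (hence the one-directional ``if'') is forcing $P_S = P_H$, which may exclude feasible points of the original problem.
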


\begin{remark}
The resulting observer gain matrix from Theorem~\ref{tim:common_P_linearized} is given by $M_u = P^{-1}M_p$. By making the optimization problem linear and convex, a global optimizer is guaranteed. However, asking for common solution of $P_H=P_D$ may eliminate a better feasible solution to the original optimization problem in \eqref{eq:synthesis_cons}. 
\end{remark}

Following \IfJtwoC{\cite{05.Ebihara.dilatedLMI},}{\cite{04.Ebihara.DilatedLMI,05.Ebihara.dilatedLMI},} it is possible to formulate an equivalent convex optimization problem to the one in 
Theorem~\ref{tim:common_P_linearized} but with noncommon $P_D$ and $P_H$ matrices, see \IfJournal{{\color{black}\cite[Appendix  F]{Li.Sanfelice.13.TR.Interconnected}}}{{Appendix~\ref{app:dilatedLMI}}}.

\IfJtwoC{}{Next, we provide an example to illustrate the results above. }
\begin{example}
\label{ex:saturation}
We revisit the motivational example with connectivity graph as in  Figure~\ref{fig:2nodes_to_neighbor_only}. To further indicate the improvement obtained by the proposed observer,
we choose $K_{11} = K_{22} = K_L = 2$,  and $K_{12} = -0.5 K_L=-1$. The resulting local $H_\infty$ gain from $m$ to $\bar e_1$ is $0.55$, which is smaller than that of the Luenberger observer, which is $0.8$. If instead the connectivity graph in Figure~\ref{fig:2nodes_alltoall} is considered, we can further optimize the parameters by solving the optimization problem  \eqref{eq:synthesis_cons}.
Feasible parameters for \eqref{eq:synthesis_cons} are found using the solver PENBMI \cite{03Michal.PENNON}. For $K_{11} \approx 3.5198$, $K_{22} \approx 0.4802$, $K_{12}\approx -8.0142$, $K_{21} \approx 0.2883$, the resulting global $H_\infty$ gain is $\approx 0.4953$, which is $\approx 38.09\%$ smaller than that of Luenberger observer (which is $0.8$ with $K_L=2$). This improvement and the improvement obtained when using 
Theorem~\ref{thm:basedonprops}
 are listed in the last two columns of Table~\ref{tab:frequency}. 
%

 In fact, when the rate of convergence specification is $\sigma = 2.5$, and the $H_\infty$ gain from $m$ to $\bar{e}$ is restricted to be less than or equal to $0.8$, then, by letting $K_{11}=2$ and $K_{22}=2$, we can find the feasible region for $K_{12}$ and $K_{21}$ as shown in Figure~\ref{fig:global_hinf_2}. Moreover, if the rate of convergence is required to be $\sigma = 3.0$ with the same $H_\infty$ constraint, then, by letting $K_{11}=2.5$ and $K_{22}=2.5$, we obtain the feasible region for $K_{12}$ and $K_{21}$ as shown in Figure~\ref{fig:global_hinf_2.5}. As the figure suggests, faster rate of convergence leads to a smaller feasible region for the observer parameters. More importantly, for a single Luenberger observer, there is no feasible solution for rate of convergence larger than or equal to $3.0$ and global $H_\infty$ gain less than $0.8$.
\IfJtwoC{ 
\begin{figure}[!h]
         \centering
         \subfigure[Regions for rate of convergence equal $2.5$ ($K_{12}=K_{21}=2$).]{
         \label{fig:global_hinf_2}
         \psfragfig[trim=5.3cm 2cm 1.8cm 2cm,clip,width=0.2\textwidth]{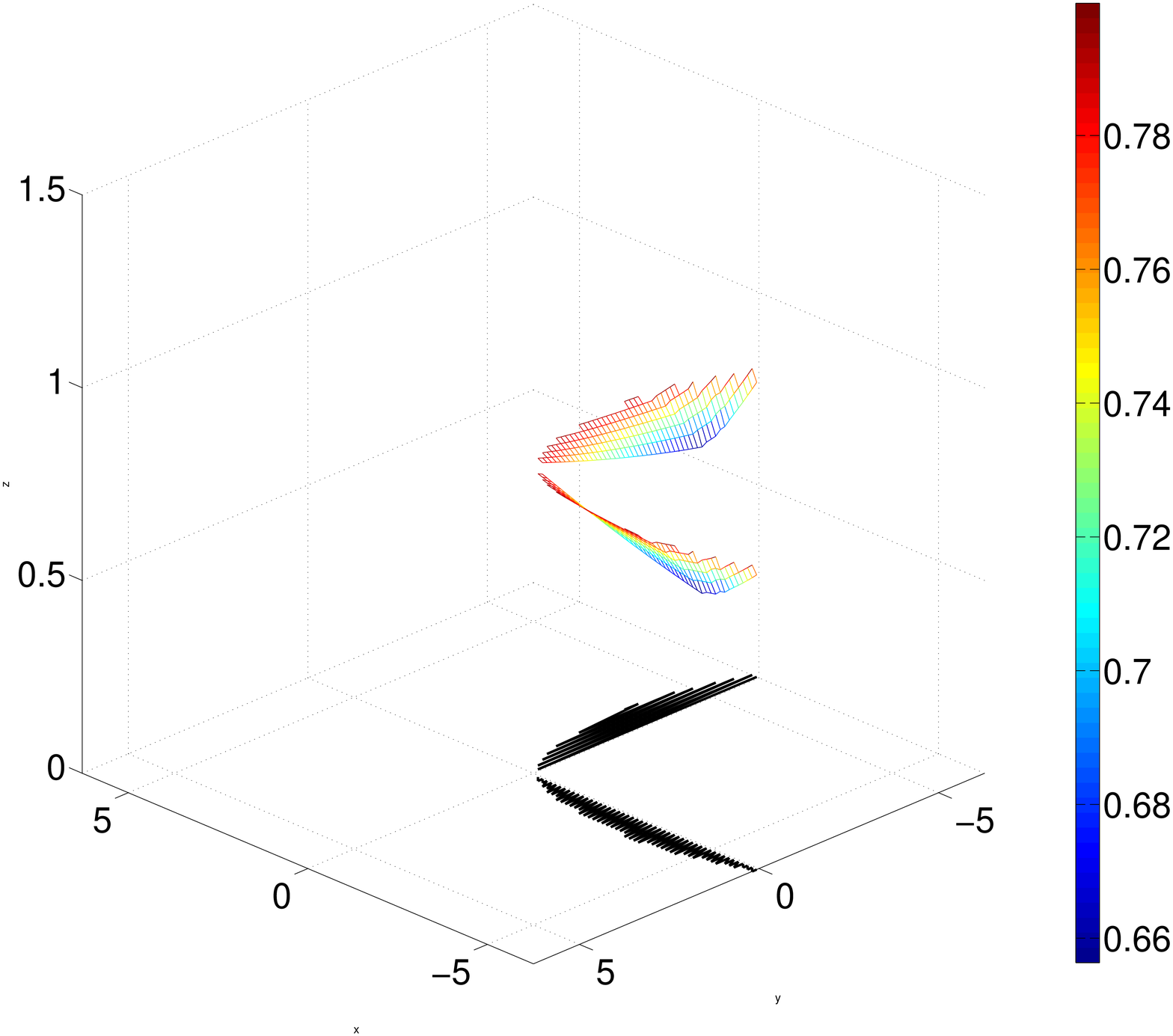}
         {
         \psfrag{x}[][][0.6]{$K_{12}$}
         \psfrag{y}[][][0.6]{$K_{21}$}  
         \psfrag{z}[][][0.6][-90]{$H_\infty$}         
         }
         }
         \ 
         \subfigure[Regions for rate of convergence equal $3.0$ ($K_{12}=K_{21}=2.5$).]{
         \label{fig:global_hinf_2.5}
         \psfragfig[trim=5.3cm 2cm 1.8cm 2cm,clip,width=0.2\textwidth]{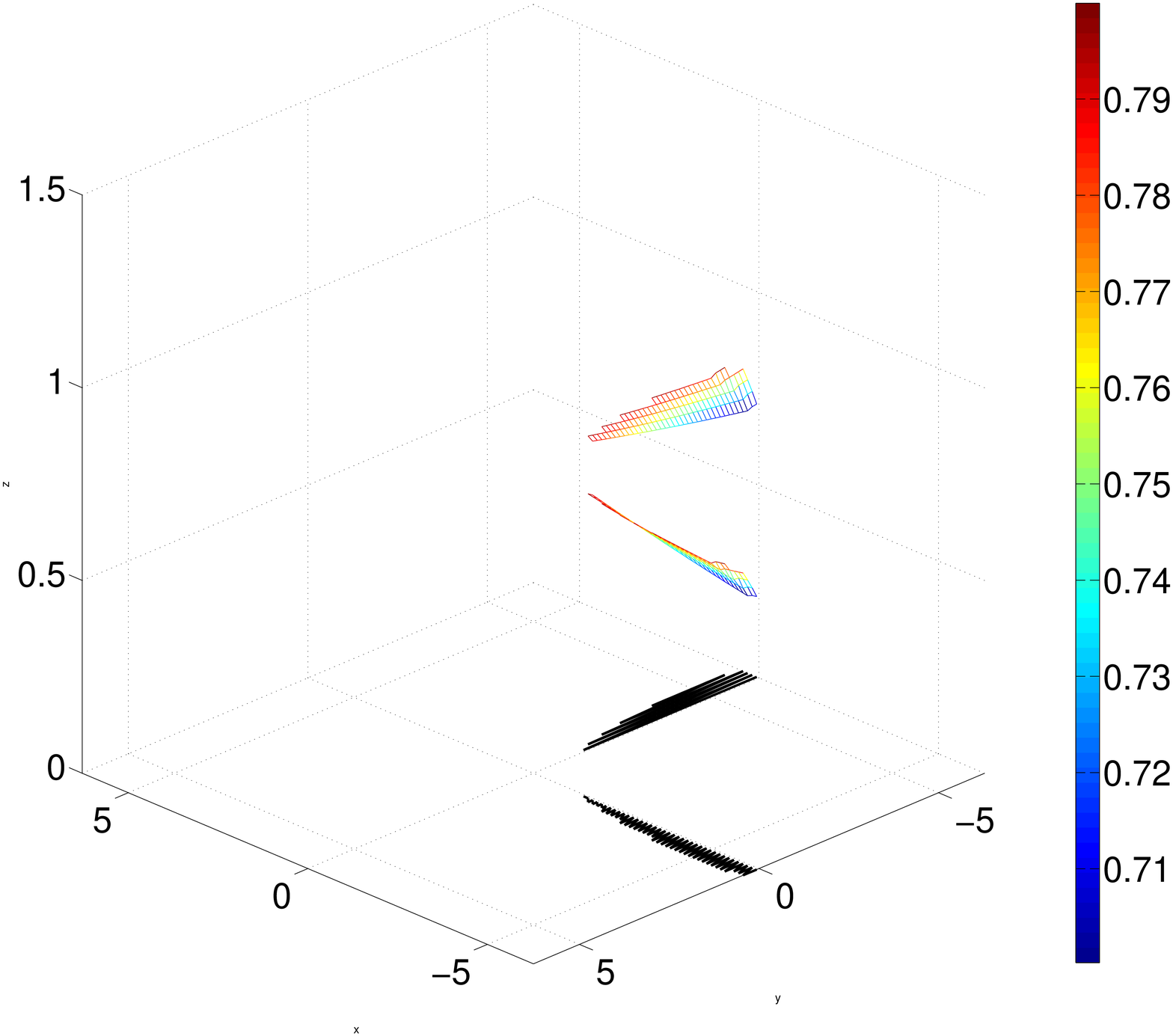}
         {
         \psfrag{x}[][][0.6]{$K_{12}$}
         \psfrag{y}[][][0.6]{$K_{21}$}     
         \psfrag{z}[][][0.6][-90]{\hspace{0in}$H_\infty$}  
         }
         }         
         \caption{Feasible regions for observer parameters subject to different rate of convergence specification and global $H_\infty$ gain less than $0.8$.}
         \label{}
\end{figure}
}{
\begin{figure}[!h]
         \centering
         \subfigure[Regions for rate of convergence equal $2.5$ ($K_{12}=K_{21}=2$).]{
         \label{fig:global_hinf_2}
         \psfragfig[trim=5.3cm 2cm 1.8cm 2cm,clip,width=0.4\textwidth]{JFiginterLplane20global}
         {
         \psfrag{x}[][][0.6]{$K_{12}$}
         \psfrag{y}[][][0.6]{$K_{21}$}  
         \psfrag{z}[][][0.6][-90]{$H_\infty$}         
         }
         }
         \ 
         \subfigure[Regions for rate of convergence equal $3.0$ ($K_{12}=K_{21}=2.5$).]{
         \label{fig:global_hinf_2.5}
         \psfragfig[trim=5.3cm 2cm 1.8cm 2cm,clip,width=0.4\textwidth]{JFiginterLplane25global}
         {
         \psfrag{x}[][][0.6]{$K_{12}$}
         \psfrag{y}[][][0.6]{$K_{21}$}     
         \psfrag{z}[][][0.6][-90]{\hspace{0in}$H_\infty$}  
         }
         }         
         \caption{Feasible regions for observer parameters subject to different rate of convergence specification and global $H_\infty$ gain less than $0.8$.}
         \label{}
\end{figure}
}
\IfJtwoC{
\begin{figure}[!h]
         \centering
	\psfragfig[trim=1cm 0.5cm 0.0cm 0cm,clip,width=0.45\textwidth]{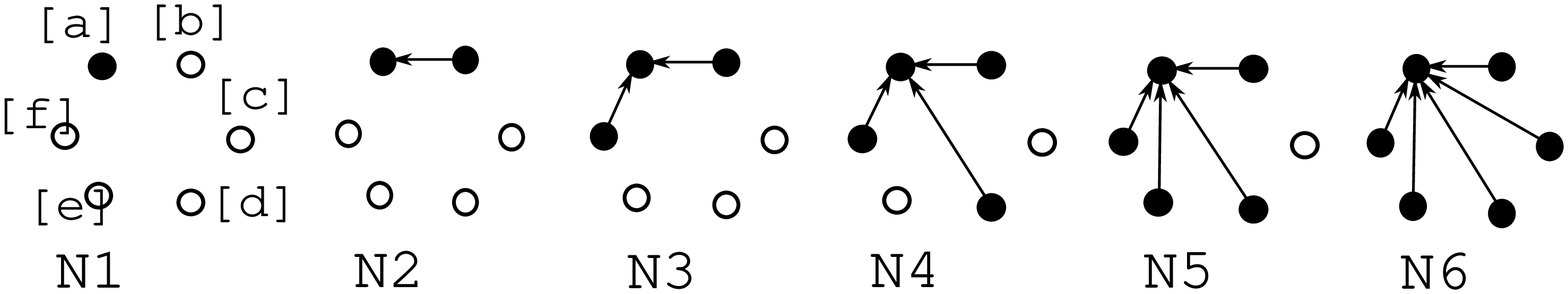}
	{      
         \psfrag{[a]}[][][0.8]{}
         \psfrag{[b]}[][][0.8]{}
         \psfrag{[c]}[][][0.8]{}
         \psfrag{[d]}[][][0.8]{}
         \psfrag{[e]}[][][0.8]{}
         \psfrag{[f]}[][][0.8]{}
         \psfrag{N1}[][][0.6]{\hspace{0.4in}$M_1=0$}
         \psfrag{N2}[][][0.6]{\hspace{0.3in}$M_1=1$}
         \psfrag{N3}[][][0.6]{\hspace{0.2in}$M_1=2$}
         \psfrag{N4}[][][0.6]{\hspace{0.3in}$M_1=3$}
         \psfrag{N5}[][][0.6]{\hspace{0.2in}$M_1=4$}
         \psfrag{N6}[][][0.6]{\hspace{0.2in}$M_1=5$}
         }
	 \put(-236,35){\scalebox{0.8}{\textcircled{\scriptsize$1$}}}  
	 \put(-208,35){\scalebox{0.8}{\textcircled{\scriptsize$2$}}}
	 \put(-200,22){\scalebox{0.8}{\textcircled{\scriptsize$3$}}}   
	 \put(-208,8){\scalebox{0.8}{\textcircled{\scriptsize$4$}}}
	 \put(-235,8){\scalebox{0.8}{\textcircled{\scriptsize$5$}}}
	 \put(-241,22){\scalebox{0.8}{\textcircled{\scriptsize$6$}}}
         \caption{Different graph structures for agent $1$ with $N=6$.}
         \label{fig:6agents_links}
\end{figure}
}{
\begin{figure}[!h]
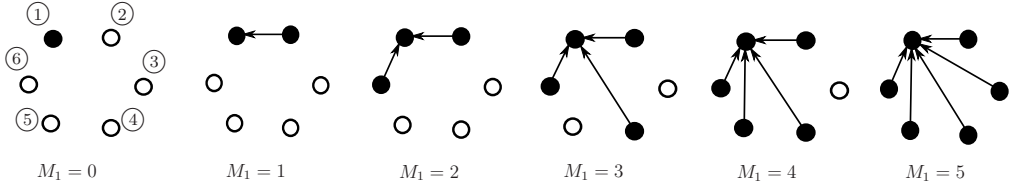

         \centering
	\psfragfig[trim=1cm 0.5cm 0.0cm 0cm,clip,width=0.8\textwidth]{J6agentslinkoneline}
	{      
         \psfrag{[a]}[][][0.8]{}
         \psfrag{[b]}[][][0.8]{}
         \psfrag{[c]}[][][0.8]{}
         \psfrag{[d]}[][][0.8]{}
         \psfrag{[e]}[][][0.8]{}
         \psfrag{[f]}[][][0.8]{}
         \psfrag{N1}[][][0.6]{\hspace{0.4in}$M_1=0$}
         \psfrag{N2}[][][0.6]{\hspace{0.3in}$M_1=1$}
         \psfrag{N3}[][][0.6]{\hspace{0.2in}$M_1=2$}
         \psfrag{N4}[][][0.6]{\hspace{0.3in}$M_1=3$}
         \psfrag{N5}[][][0.6]{\hspace{0.2in}$M_1=4$}
         \psfrag{N6}[][][0.6]{\hspace{0.2in}$M_1=5$}
         }
	 \put(-372,56){\scalebox{0.8}{\textcircled{\scriptsize$1$}}}  
	 \put(-340,56){\scalebox{0.8}{\textcircled{\scriptsize$2$}}}
	 \put(-328,38){\scalebox{0.8}{\textcircled{\scriptsize$3$}}}   
	 \put(-336,16){\scalebox{0.8}{\textcircled{\scriptsize$4$}}}
	 \put(-376,16){\scalebox{0.8}{\textcircled{\scriptsize$5$}}}
	 \put(-380,39){\scalebox{0.8}{\textcircled{\scriptsize$6$}}}
         \caption{Different graph structures for agent $1$ with $N=6$.}
         \label{fig:6agents_links}
\end{figure}
}
\begin{table}[!ht]
\setlength\tabcolsep{4.5pt}
\centering
\caption{Comparison of local $H_\infty$ norms from noise $m$ to $\bar{e}_1$ with different number of incoming edges for agent $1$.}
\begin{tabular}{|l|c|c|c|c|c|c|}
\hline
\multirow{2}{*}{} & \multicolumn{6}{|c|} {number of non-self edges ($M_1$)} \\ \cline{2-7}
& 0 & 1 & 2 & 3 & 4 & 5 \\ \hline 
local $H_\infty$    & 0.80    & 0.45   & 0.34    & 0.28    & 0.25    & 0.22         \\ \hline
improv. (\%) & 0.00    & 43.8   & 57.5    & 65.0    & 68.8    & 72.5         \\ \hline
\end{tabular}
\label{tab:numberoflinks}
\end{table}

Now, for the same plant, consider digraphs with $6$ agents where the edges are defined as in Figure~\ref{fig:6agents_links}. In all cases, each agent is self connected. Let $M_1$ denote the number of non-self edges for agent $1$, e.g., when $M_1 =0$ as shown in Figure~\ref{fig:6agents_links}, it is implied that $G=I_6$, while when $M_1 = 5$, $G =\left[\begin{array}{ll} 
\!g_1 &  g_2\!
\end{array}\right]$, $g_1 = [1\ 1_5^\top]^\top$ and $g_2 = [0\ I_5]^\top$.
Let the rate of convergence specification be $\sigma = 2.5$. Then, the local $H_\infty$ norms from noise $m=(m_1,\dots,m_6)$ to estimation error $\bar e_1$ at agent $1$ for the cases in Figure~\ref{fig:6agents_links} are shown in Table~\ref{tab:numberoflinks}. From case $M_1 =0$ to case $M_1=1$, the improvement is significant; in fact, when an incoming edge is added to agent $1$, the local $H_\infty$ is improved by $43.8\%$ when compared to the case where a single Luenberger observer is used at agent $1$.  When two agents provide information to agent $1$ ($M_1=2$), the improvement is approximately $57.5\%$, while when three and four agents communicate to agent $1$, the improvement grows to approximately $65\%$ and $69\%$ ($M_1=4$), respectively. 
\end{example}
\begin{example}[second order plant]
\label{ex:second_order_plants}
First, we consider a second-order plant given as in \eqref{eq:plant} with
$
             A=\left[ \begin{array}{cc}
               -5/2 &  1/10 \\
               4/100 & -3
             \end{array} \right],\
             C=\left[ \begin{array}{cc}
               1 & 2 
             \end{array} \right].
$
For a given Luenberger observer with $K_L = [1.5\quad -0.16]^\top$, its rate of convergence is $-3.34$ and its $H_\infty$ norm from measurement noise $m$ to estimation error $e_L$ is equal $0.34$. With the interconnected observers for $N=2$ connected via an all-to-all connectivity graph, we obtain that the optimal global $H_\infty$ norm from measurement noise $m$ to estimation error $\bar e$ is approximately $0.05$ and the optimal local $H_\infty$ norm from $m$ to $\bar e_1$ (or $\bar{e}_2$) is $0.03$ with 
$M_u = [v_1\ v_2]$, where $v_1 = [10.3834\ -1.6019\ -10.7581\ 1.5963]^\top$ and $v_2 = [7.1992\ -1.2410\ -7.3028\ 1.2426]^\top$.
This is $\approx85.29\%$ smaller than that of Luenberger observers. 

Then, we consider a second-order plant with oscillatory behavior given as in \eqref{eq:plant} with
$
             A=\left[ \begin{array}{cc}
               0 & -1 \\
               1 & 0
             \end{array} \right]$, 
$             C=\left[ \begin{array}{cc}
               1 & 0 
             \end{array} \right].
$
For a given Luenberger observer with $K_L = [2\quad 0]^\top$, its rate of convergence is $-1$ and its $H_\infty$ norm from measurement noise $m$ to estimation error $e_L$ is equal $2$. With the interconnected observers with $N=2$ connected via an all-to-all connectivity  graph, by formulating the problem according to \eqref{eq:synthesis_model}, the optimization problem in Theorem~\ref{thm:optimization_Hinf_fixedstructure} is solved and the gain matrix is found as 
     {
     $M_u = [v_1\ v_2]$, where $v_1 = [7.9503\ -9.9554\ -5.9424\  9.0014]^\top$ and $v_2 = [-5.9324\ 9.1143\ 7.9605\ -9.8426]^\top$.
     }
Its corresponding global $H_\infty$ norm from $m$ to $\overline{e}$ is $\!\approx\!1.4142$ and its local $H_\infty$ norm from $m$ to $\bar e_1$ (or $\bar e_2$) is $\approx\!1$. Comparing to the Luenberger observer, the global $H_\infty$ norm is decreased by $\!\approx\!29.3\%$ and the local $H_\infty$ norm is decreased by $\!\approx\!50.0\%$.  
\end{example}
The improvements on the local $H_\infty$ gain guaranteed by the proposed interconnected observers in the examples above are justified by the fact that the sufficient condition given in the upcoming Section~\ref{sec:sufficientcondHinf} are satisfied; see Theorem~\ref{thm:smaller_Hgain_guaranteed} and below it, where these examples are revisited.
\subsubsection{Minimization of $H_\infty$ norm under rate of convergence constraint with optimized connectivity graph}
For interconnected observers whose digraph has not yet been specified, a natural question to ask is whether there exists a digraph that minimizes the number of links between agents for the given specifications. 
\IfJtwoC{}{More precisely, given a rate of convergence $\sigma$ and a desired $H_\infty$ gain $\gamma^\star$, find a digraph with minimum number of edges.}
In applications, such minimizations could potentially lower the cost of a distributed system as it could reduce the number of agents and communication links.  The following  result  provides a sufficient and necessary condition for such optimization problem.
\begin{theorem}
\label{thm:minimizinglinks}
For the error system \eqref{eq:general_error_compact_graph}, the rate of convergence is larger than or equal to $\sigma$ and the global $H_\infty$ norm (respectively, the local $H_\infty$ norm) from noise $m$ to estimation error $\bar{e}$ in \eqref{eq:general_error_compact_graph} (respectively, $\bar{e}_i$ in \eqref{eq:error_individual}) is less than or equal to $\gamma^\star$ over a digraph $\Gamma$ with minimized number of edges if and only if there exist matrices ${\cal K}$, $G$,  $P_S$, and $P_H$ such that the following optimization problem has a solution:  
\begin{subequations}
\begin{align}
&\inf \mbox{\rm tr}(D) \label{eq:synthesis_obj}\\[0.001mm]
 \mbox{s.t.}\ &\mbox{\rm He}({\cal A}, P_S) + 2 \sigma P_S <0, \label{eq:synthesis_cons_eigen} \\[0.001mm]
&\left[\begin{array}{ccc}
\mbox{\rm He}({\cal A}, P_H) & P_H {\cal B} & {\cal X}^\top \\[0.001mm]
{\cal B}^\top P_H             & -\gamma^\star I         & 0 \\[0.001mm]
{\cal X}                             & 0                      & -\gamma^\star I
\end{array}\right] <0, \label{eq:synthesis_cons_hinf} \\[0.001mm]
& P_S = P_S^\top >0,\ P_H = P_H^\top >0,
\end{align}
\label{eq:synthesis_cons_nonlinear}%
\end{subequations} 
where ${\cal X} = {\cal C}$ (respectively, ${\cal X} = {\cal C}_i$).
\end{theorem}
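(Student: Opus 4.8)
The plan is to reduce this result to Theorem~\ref{thm:optimization_Hinf_fixedstructure} by two moves: first, re-expressing the combinatorial objective ``number of edges'' as the trace $\mbox{\rm tr}(D)$; second, reusing the LMI characterizations of the rate-of-convergence and $H_\infty$ specifications, now with the graph $G$ treated as a decision variable and the $H_\infty$ level fixed at $\gamma^\star$.

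First I would establish the identity $\mbox{\rm tr}(D)=\mbox{card}({\cal E})$. By definition of the in-degree matrix, $\mbox{\rm tr}(D)=\sum_{i\in{\cal V}} d^{in}(i)=\sum_{i\in{\cal V}}\sum_{j\in{\cal V}} g_{ji}$, which counts every unit entry of the adjacency matrix $G$, i.e., every edge of $\Gamma$. Since each agent is self-connected, the $N$ self-loops contribute a fixed offset, so minimizing $\mbox{\rm tr}(D)$ is equivalent to minimizing the number of non-self edges. This converts the edge-minimization design goal into the trace objective \eqref{eq:synthesis_obj}.

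Next, for each fixed admissible graph $G$ (and the induced $D$, ${\cal A}$, ${\cal B}$, ${\cal C}$), I would invoke the two equivalences already used in the proof of Theorem~\ref{thm:optimization_Hinf_fixedstructure}. Following \cite{Chilali.1996.HwithPole}, constraint \eqref{eq:synthesis_cons_eigen} together with $P_S=P_S^\top>0$ holds if and only if all eigenvalues of ${\cal A}$ lie strictly to the left of $-\sigma$, i.e., the rate of convergence is at least $\sigma$; this is precisely \eqref{cons_region}. By the bounded-real lemma \cite[Theorem 2.41]{90.scherer.dssertation}, constraint \eqref{eq:synthesis_cons_hinf} with $P_H=P_H^\top>0$ and the fixed level $\gamma^\star$ holds if and only if the transfer function $T(s)={\cal X}(sI-{\cal A})^{-1}{\cal B}$ from $m$ to $\bar{e}$ (when ${\cal X}={\cal C}$) or to $\bar{e}_i$ (when ${\cal X}={\cal C}_i$) has $H_\infty$ norm at most $\gamma^\star$. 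Thus, for a fixed graph, feasibility of the two LMIs is equivalent to the pair of specifications being met on that graph.

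The two implications of the theorem then follow by combining these facts with the trace identity. For the ``only if'' direction, a digraph with the minimum number of edges admitting a gain ${\cal K}$ that realizes rate $\geq\sigma$ and $H_\infty$ gain $\leq\gamma^\star$ yields, via the converse directions above, matrices $P_S=P_S^\top>0$ and $P_H=P_H^\top>0$ satisfying \eqref{eq:synthesis_cons_eigen}--\eqref{eq:synthesis_cons_hinf}, while its $G$ attains the smallest $\mbox{\rm tr}(D)$; hence it is a minimizer of \eqref{eq:synthesis_cons_nonlinear}. For the ``if'' direction, any optimizer $({\cal K},G,P_S,P_H)$ of \eqref{eq:synthesis_cons_nonlinear} produces, by the forward directions above, a graph on which both specifications hold while minimizing $\mbox{\rm tr}(D)$, i.e., the number of edges. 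I expect the main obstacle to be not the analysis itself---which mirrors Theorem~\ref{thm:optimization_Hinf_fixedstructure}---but the bookkeeping needed to make the equivalence rigorous while $G$ ranges over admissible (binary-entried, self-looped) adjacency matrices and enters the problem data nonlinearly through the Khatri--Rao product ${\cal K}*G^\top$: one must verify that the feasible set is exactly the set of such graphs, so that the optimal value of \eqref{eq:synthesis_cons_nonlinear} coincides with the true minimal edge count. In particular, since the problem is combinatorial in $G$ and bilinear in $({\cal K},G,P_S,P_H)$, the theorem asserts an exact equivalence of optimal values rather than computational tractability.
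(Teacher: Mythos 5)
Your proposal is correct and follows essentially the same route as the paper's own proof: it reuses the bounded-real-lemma characterization of the $H_\infty$ constraint and the pole-region characterization of the convergence-rate constraint from Theorem~\ref{thm:optimization_Hinf_fixedstructure}, and then identifies $\mbox{\rm tr}(D)=\sum_{i,j}g_{ij}$ with the edge count so that minimizing the trace is equivalent to minimizing the number of edges. Your additional remarks (the fixed self-loop offset and the bookkeeping over admissible binary $G$) are sound elaborations of the same argument rather than a different approach.
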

\begin{proof}
Following the proof of Theorem~\ref{thm:optimization_Hinf_fixedstructure}, the global $H_\infty$ gain over a digraph $\Gamma$ is less than or equal to $\gamma^\star$ if and only if \eqref{eq:general_Hinf} holds with $A_1={\cal A}$, $B_1 = {\cal B}$, $C_1 = {\cal C}$, $\gamma = \gamma^\star$ and $P_H=P_H^\top>0$. The same argument applies to the local $H_\infty$ gain. Moreover, the rate of convergence condition is satisfied if and only if \eqref{eq:synthesis_cons_eigen} holds. Since $\mbox{\rm tr}(D)=\sum_{i=1}^{N} \sum_{j=1}^N g_{ij}$, where $g_{ij} =1$ indicates there is an edge from node $j$ to node $i$, then the number of edges of the graph is minimized if and only if $\mbox{\rm tr}(D)$ is minimized. 
\end{proof} 
The constraints in \eqref{eq:synthesis_cons_eigen} and \eqref{eq:synthesis_cons_hinf} are nonlinear and not jointly convex. By changing variables, the nonlinear constraints in \eqref{eq:synthesis_cons_eigen} and \eqref{eq:synthesis_cons_hinf} can be linearized as a function of $Q$ and $P$.
\begin{theorem} 
For the error system \eqref{eq:general_error_compact_graph}, the rate of convergence is larger than or equal to $\sigma$ and the global $H_\infty$ norm (respectively, the local $H_\infty$ norm) from noise $m$ to estimation error $\bar{e}$ in \eqref{eq:general_error_compact_graph} (respectively, $\bar{e}_i$ in \eqref{eq:error_individual}) is less than or equal to $\gamma^\star$ over a digraph $\Gamma$ with minimized number of communication links if  there exist matrices ${\cal K}$, $G$, and $P$ such that the following optimization problem is feasible:  
\IfJtwoC{
\begin{subequations}
\begin{align}
&\inf \mbox{\rm tr}(D) \label{eq:synthesis_obj_linearized}\\
 s.t.\ &\mbox{\rm He}(I_N \otimes A, P) \startmodif + \stopmodif Z +  2 \sigma P <0, \label{} \\
&\left[\begin{array}{ccc}
\mbox{\rm He}(I_N \otimes A, P) \startmodif+\stopmodif Z & Q & {\cal X}^\top \\
Q^\top                             & -\gamma^\star I         & 0 \\
{\cal X}                             & 0                      & -\gamma^\star I
\end{array}\right] <0, \label{} \\
& P = P^\top >0,
\end{align}
\label{eq:synthesis_linearized}%
\end{subequations}
where $Q = P ({\cal K} * G^\top)$, $Z = - Q (I_N \otimes C) - (I_N \otimes C)^\top Q^\top$,  and ${\cal X} = {\cal C}$ (respectively, ${\cal X} = {\cal C}_i$).
}{
\begin{subequations}
\begin{align}
&\inf \mbox{\rm tr}(D) \label{eq:synthesis_obj_linearized}\\
 s.t.\ &\mbox{\rm He}(I_N \otimes A, P) - Q (I_N \otimes C) - (I_N \otimes C)^\top Q^\top +  2 \sigma P <0, \label{} \\
&\left[\begin{array}{ccc}
\mbox{\rm He}(I_N \otimes A, P) - Q (I_N \otimes C) - (I_N \otimes C)^\top Q^\top & Q & {\cal X}^\top \\
Q^\top                             & -\gamma^\star I         & 0 \\
{\cal X}                             & 0                      & -\gamma^\star I
\end{array}\right] <0, \label{} \\
& P = P^\top >0,
\end{align}%
\label{eq:synthesis_linearized}%
\end{subequations}%
where $Q = P ({\cal K} * G^\top)$, and ${\cal X} = {\cal C}$ (respectively, ${\cal X} = {\cal C}_i$). }
\end{theorem}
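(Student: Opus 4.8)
The plan is to reduce the stated linearized problem to the nonlinear feasibility problem \eqref{eq:synthesis_cons_nonlinear} of Theorem~\ref{thm:minimizinglinks} by seeking a common certificate $P_S = P_H = P$ and performing the change of variables $Q = P({\cal K}*G^\top)$, exactly as done for the fixed-graph case in Proposition~\ref{prop:separation}~$a)$ and Theorem~\ref{tim:common_P_linearized}. Since restricting to a common $P$ is a specialization, this yields only the sufficient (``if'') direction, which is all the statement claims.

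First I would expand $\mbox{\rm He}({\cal A}, P)$ using the definition ${\cal A} = I_N\otimes A - ({\cal K}*G^\top)(I_N\otimes C)$ from \eqref{eq:matrix_graph}. Writing out ${\cal A}^\top P + P{\cal A}$ and using $P=P^\top$ together with $Q = P({\cal K}*G^\top)$, so that $Q^\top = ({\cal K}*G^\top)^\top P$, gives
\begin{align*}
\mbox{\rm He}({\cal A}, P) = \mbox{\rm He}(I_N\otimes A, P) - Q(I_N\otimes C) - (I_N\otimes C)^\top Q^\top = \mbox{\rm He}(I_N\otimes A, P) + Z,
\end{align*}
with $Z$ as defined in the statement. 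Substituting this identity into the rate-of-convergence constraint \eqref{eq:synthesis_cons_eigen} (with $P_S=P$) reproduces the first matrix inequality of the linearized problem verbatim. For the $H_\infty$ block inequality \eqref{eq:synthesis_cons_hinf} (with $P_H=P$ and $\gamma=\gamma^\star$), the same identity rewrites the $(1,1)$ block, while the $(1,2)$ block satisfies $P_H{\cal B} = P({\cal K}*G^\top) = Q$ because ${\cal B} = {\cal K}*G^\top$ by \eqref{eq:matrix_graph}; the remaining blocks $-\gamma^\star I$ and ${\cal X}$ are unchanged. Hence the two linearized matrix inequalities are exactly \eqref{eq:synthesis_cons_eigen}--\eqref{eq:synthesis_cons_hinf} evaluated at $P_S=P_H=P$.

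Consequently, any feasible tuple $({\cal K}, G, P)$ of the linearized problem (with $Q$ and $Z$ defined from it) furnishes a feasible point of \eqref{eq:synthesis_cons_nonlinear} under the common choice $P_S=P_H=P$; by Theorem~\ref{thm:minimizinglinks} the resulting interconnected observer then attains rate of convergence at least $\sigma$ and $H_\infty$ gain at most $\gamma^\star$, and since the objective $\mbox{\rm tr}(D)$ is identical in both problems, its minimizer minimizes the number of edges. This establishes the ``if'' direction and explains why the statement is not an equivalence: forcing $P_S=P_H$ can remove feasible points of the nonlinear problem, so feasibility of the linearized problem is sufficient but not necessary.

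I expect the main technical care to lie in the $\mbox{\rm He}$ expansion with the Kronecker/Khatri--Rao structure, and in making precise that the reformulation linearizes only the terms $\mbox{\rm He}({\cal A},\cdot)$ and $P_H{\cal B}$: the objective $\mbox{\rm tr}(D)$ and the output matrix ${\cal X}\in\{{\cal C},{\cal C}_i\}$ still depend on the graph data $G,D$ through \eqref{eq:matrix_graph}, so the constraints are affine in $(P,Q)$ only once the graph structure is fixed, with $G$ entering through a separate combinatorial layer of the optimization. A secondary point to flag is realizability: from $(P,Q)$ one recovers ${\cal K}*G^\top = P^{-1}Q$, and this must be consistent with the sparsity pattern imposed by $G^\top$ in the Khatri--Rao product, which is automatic here because $G$ and ${\cal K}$ are themselves decision variables rather than being reconstructed from $Q$ alone.
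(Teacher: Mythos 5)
Your proposal is correct and follows essentially the same route as the paper's proof: restrict to a common certificate $P_S = P_H = P$, use the change of variables $Q = P({\cal K}*G^\top)$ to show the $(1,1)$ blocks equal $\mbox{\rm He}({\cal A},P)$, and conclude feasibility of \eqref{eq:synthesis_cons_nonlinear} so that Theorem~\ref{thm:minimizinglinks} applies. Your additional checks (the $(1,2)$ block identity $P{\cal B}=Q$, the realizability of ${\cal K}*G^\top = P^{-1}Q$, and the explanation of why only sufficiency holds) are sound refinements of the paper's terser argument, not a different approach.
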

\begin{proof}
Let ${\cal K}$, $G$ and $P$ be solutions of the optimization problem \eqref{eq:synthesis_linearized}.   Since the matrix ${\cal K} * G^\top$ is such that $Q = P ({\cal K} * G^\top)$, using $P = P^\top$ and the definition of ${\cal A}$ in \eqref{eq:matrix_graph}, we have
\IfJtwoC{
\begin{align*}
&\mbox{\rm He}(I_N \otimes A, P) - Q (I_N \otimes C) - (I_N \otimes C)^\top Q^\top \\
&\quad = (I_N\otimes A)^\top P + P(I_N\otimes A)\\
&\qquad -(I_N\otimes C)^\top({\cal K} * G^\top)^\top P^\top - P ({\cal K} * G^\top)(I_N\otimes C)\\
&\quad=\mbox{\rm He}({\cal A},P).
\end{align*}
}{
\begin{align*}
&\mbox{\rm He}(I_N \otimes A, P) - Q (I_N \otimes C) - (I_N \otimes C)^\top Q^\top \\
&\quad = (I_N\otimes A)^\top P + P(I_N\otimes A)-(I_N\otimes C)^\top({\cal K} * G^\top)^\top P^\top - P ({\cal K} * G^\top)(I_N\otimes C)=\mbox{\rm He}({\cal A},P).
\end{align*} }
Then, ${\cal K}$, $G$, $P_S=P$ and $P_H = P$ satisfy \eqref{eq:synthesis_cons_nonlinear}. 
\end{proof}
\begin{remark}
The results above define the graph via the resulting $G$. The resulting ${\cal K}$ and $G$ from\IfJtwoC{}{ the optimization problem} \eqref{eq:synthesis_linearized} satisfies ${\cal K}* G^\top = P^{-1} Q$, which may not be unique.
\end{remark}
\begin{example}
\label{ex:number_internal_obs}
Consider the scalar plant in \eqref{eq:scalarplant} with $a \!=\! -0.5$ as in Example~\ref{ex:saturation}, which can represent the dynamics of a mobile agent whose state is to be estimated using multiple sensors either fixed or mobile (in relative coordinates). \stopmodif  Suppose that the rate of convergence specification is $\sigma = 2.5$.  When using the graph that is all-to-all as shown in Figure~\ref{fig:alltoallconnectedgraph}, it is natural to ask the effect that the number of agents has on the improvement of the global $H_\infty$ norm. As shown in Figure~\ref{fig:saturation_global}, the resulting global $H_\infty$ gain is reduced as the number of agents $N$ grows. These results are obtained following Theorem~\ref{thm:minimizinglinks}. The improvement is summarized in Table~\ref{tab:global_numberofagent}. 
\IfJtwoC{
\begin{figure}[!h]
         \centering
         \subfigure[Graph structures with all to all connections (self connection links are not shown).]{
         \label{fig:alltoallconnectedgraph}
         \psfragfig[trim=0cm 0cm 0cm 0cm,clip,width=0.21\textwidth]{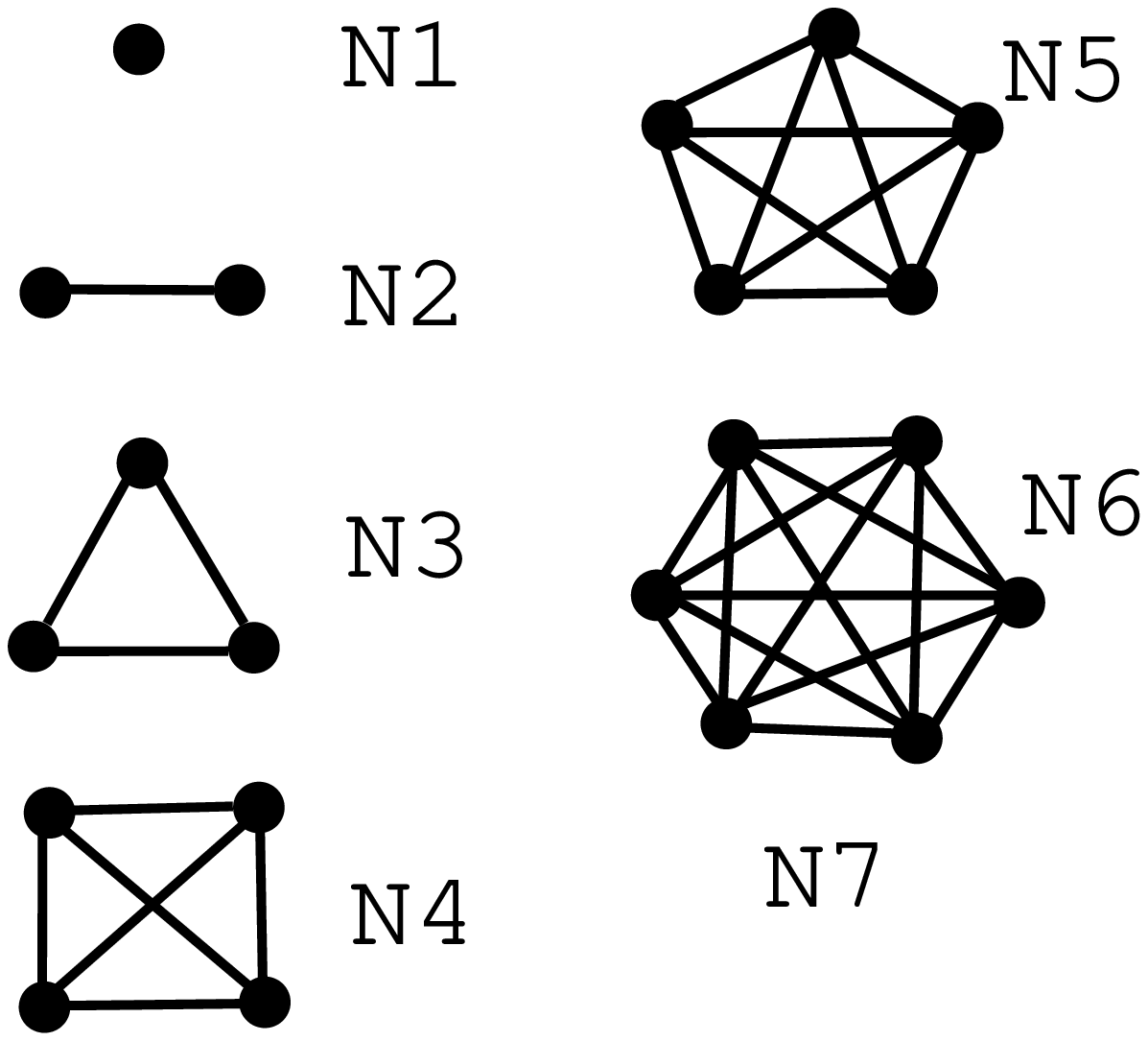}
         {
         \psfrag{N1}[][][0.6]{$N=1$}
         \psfrag{N2}[][][0.6]{$N=2$}
         \psfrag{N3}[][][0.6]{$N=3$}
         \psfrag{N4}[][][0.6]{$N=4$}
         \psfrag{N5}[][][0.6]{$N=5$}
         \psfrag{N6}[][][0.6]{$N=6$}
         \psfrag{N7}[][][1]{\ \ \ \ \quad $\vdots$}          
         }
         }
         \
         \subfigure[The global $H_\infty$ norm from noise $m$ to estimation error $\bar{e}$ with respect to the number of agents.]{
         \label{fig:saturation_global}
         \centering
         \psfragfig[trim=4.6cm 0.8cm 2.4cm 0.6cm,clip,width=0.22\textwidth]{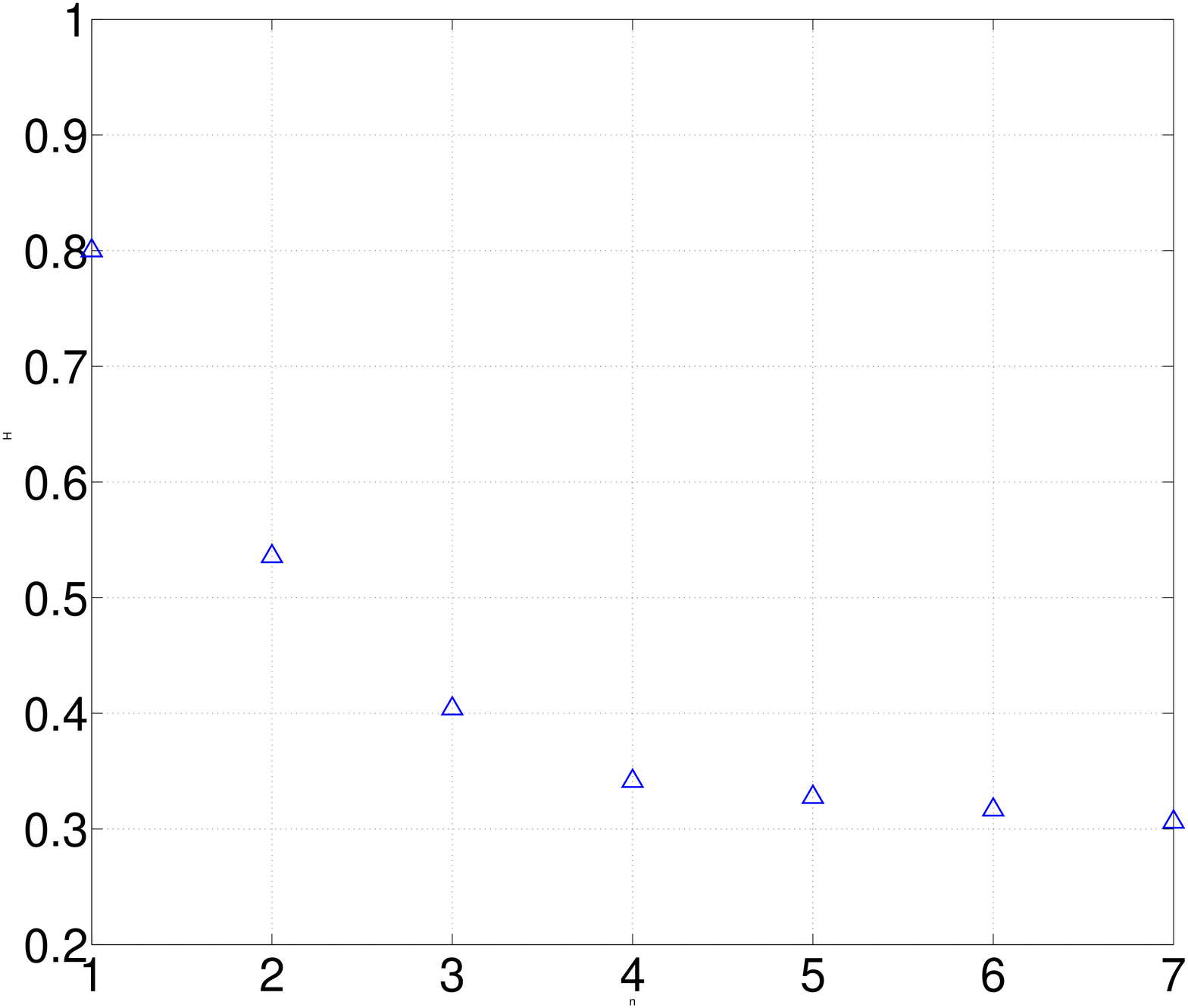}
         {
         \psfrag{n}[][][0.6]{\hspace{-0.4in}$N$}
         \psfrag{H}[][][0.6][-90]{}
         }
	\put(-100,52){\scalebox{0.7}{\color{red}\bf big improvement}}
	\put(-63,28){\scalebox{0.7}{\color{mygreen}\bf little improvement}}
	\put(-117,58){\scalebox{0.7}{{$\gamma$}}}
         }         
         \caption{The influence of the number of agents on the $H_\infty$ gain from noise $m$ to estimation error $\bar{e}$.}
\end{figure}  
}{
\begin{figure}[!h]
         \centering
         \subfigure[Graph structures with all to all connections (self connection links are not shown).]{
         \label{fig:alltoallconnectedgraph}
         \psfragfig[trim=0cm 0cm 0cm 0cm,clip,width=0.38\textwidth]{Jalltoall16graph}
         {
         \psfrag{N1}[][][0.6]{$N=1$}
         \psfrag{N2}[][][0.6]{$N=2$}
         \psfrag{N3}[][][0.6]{$N=3$}
         \psfrag{N4}[][][0.6]{$N=4$}
         \psfrag{N5}[][][0.6]{\qquad\ \ $N=5$}
         \psfrag{N6}[][][0.6]{\qquad\ \ $N=6$}
         \psfrag{N7}[][][1]{\ \ \ \ \quad $\vdots$}          
         }
         }
         \quad
         \subfigure[The global $H_\infty$ norm from noise $m$ to estimation error $\bar{e}$ with respect to the number of agents.]{
         \label{fig:saturation_global}
         \psfragfig[trim=1.5cm 0.8cm 1.3cm 0.7cm,clip,width=0.45\textwidth]{JFigsaturationglobal}
         {
         \psfrag{n}[][][0.6]{\hspace{-0.4in}$N$}
         \psfrag{H}[][][0.6][-90]{\hspace{-0.17in}$\gamma$}
         }
	\put(-170,70){\tiny\color{red}\bf big improvement}
	\put(-85,43){\tiny\color{mygreen}\bf little improvement}
         }         
         \caption{The influence of the number of agents on the $H_\infty$ gain from noise $m$ to estimation error $\bar{e}$.}
\end{figure}    }
\IfJtwoC{
\begin{table}[!h]
\setlength\tabcolsep{3pt}
\centering
\caption{Comparison of global $H_\infty$ norms from noise $m$ to $\bar{e}$ with different number of agents under all-to-all connection.}
\begin{tabular}{|l|c|c|c|c|c|c|c|}
\hline
\multirow{2}{*}{} & \multicolumn{7}{|c|} {number of agents ($N$)} \\ \cline{2-8}
& 1 & 2 & 3 & 4 & 5 & 6 & 7  \\ \hline 
global $H_\infty$      & 0.80 & 0.54 & 0.40 & 0.34 & 0.33 & 0.32 & 0.31  \\ \hline
improv. (\%)           & 0.00 & 32.5 & 50.0 & 57.5 & 58.8 & 60.0 & 61.3  \\ \hline
local $H_\infty$       & 0.80 & 0.38 & 0.24 & 0.23 & 0.21 & 0.20 & 0.19  \\ \hline
improv. (\%)           & 0.00 & 52.5 & 70.0 & 71.3 & 73.8 & 75.0 & 76.3  \\ \hline
\end{tabular}
\label{tab:global_numberofagent}
\end{table}
}{
\begin{table}[!h]
\tabcolsep=3.8pt
\centering
\caption{Comparison of global $H_\infty$ norms from noise $m$ to $\bar{e}$ with different number of agents under all-to-all connection.}
\begin{tabular}{|l|c|c|c|c|c|c|c|}
\hline
\multirow{2}{*}{} & \multicolumn{7}{|c|} {number of agents ($N$)} \\ \cline{2-8}
& 1 & 2 & 3 & 4 & 5 & 6 & 7  \\ \hline 
global $H_\infty$      & 0.80 & 0.54 & 0.40 & 0.34 & 0.33 & 0.32 & 0.31  \\ \hline
improv. (\%)           & 0.00 & 32.5 & 50.0 & 57.5 & 58.8 & 60.0 & 61.3  \\ \hline
local $H_\infty$       & 0.80 & 0.38 & 0.24 & 0.23 & 0.21 & 0.20 & 0.19  \\ \hline
improv. (\%)           & 0.00 & 52.5 & 70.0 & 71.3 & 73.8 & 75.0 & 76.3  \\ \hline
\end{tabular}
\label{tab:global_numberofagent}
\end{table} }
Note that the improvement is less significant for $N > 6$. In particular, the table indicates that if the global $H_\infty$ gain is required to be less than or equal to $0.40$, then, as shown in Figure~\ref{fig:saturation_global}, the least number of agents needed is three\footnote{The optimization problem related to the examples shown in this paper are solved by PENBMI \cite{03Michal.PENNON}.}. For the same scalar plant with three interconnected observers, according to Theorem~\ref{thm:minimizinglinks}, we establish a relationship between $\mbox{\rm tr}(D)$ and the global $H_\infty$ gain from $m$ to estimation error $\bar e$ in Table~\ref{tab:global_minimizing_trace}. In particular, for $\mbox{\rm tr}(D)$ smaller than six, there is no improvement in the $H_\infty$ gain when compared to that of Luenberger observers. Moreover, the table indicates that, with three interconnected observers, if the global $H_\infty$ gain is required to be less than or equal to $0.6$, then the minimum number of links required in the connectivity graph $\Gamma$ is seven.
\begin{table}[!h]
\tabcolsep=3.8pt
\centering
\caption{Comparison of global $H_\infty$ norms from noise $m$ to $\bar{e}$ with different connectivity graph with $N=3$.}
\begin{tabular}{|l|c|c|c|c|c|c|c|}
\hline
\multirow{2}{*}{} & \multicolumn{4}{|c|} {tr(D)} \\ \cline{2-5}
                                     & 6 & 7 & 8 & 9   \\ \hline 
global $H_\infty$          & 0.64   & 0.53    & 0.43    & 0.40    \\ \hline
improv. (\%)                  & 20.0   & 33.8    & 46.3    & 50.0   \\ \hline
\end{tabular}
\label{tab:global_minimizing_trace}
\end{table} 
\end{example}
\IfJtwoC{\vspace{-14pt}}{}
\subsection{A sufficient condition guaranteeing smaller local $H_\infty$ gain}
\label{sec:sufficientcondHinf}
In this section, we are interested in conditions on the plant \eqref{eq:plant} for which it is possible to design interconnected observers  that, for a given rate of convergence $\sigma$, have local $H_\infty$ gains  smaller than when a single Luenberger observer is used at each agent. Note that
the local $H_\infty$ gain affects the quality of the estimates obtained at each node.
These estimates can be computed efficiently and in a decentralized manner using local information, while computing the global estimate requires additional algorithms -- see \IfJtwoC{{\color{black}\cite[Section~\uppercase\expandafter{\romannumeral4}.B]{Li.Sanfelice.13.TR.Interconnected}}}{Section~\ref{subsec:consensus}}.
The following result provides one such condition.
\begin{theorem}
\label{thm:smaller_Hgain_guaranteed}
Given $\sigma \geq 0$, suppose $K_L$ is such that the eigenvalues of the error system \eqref{eq:singleob_error} of the Luenberger observer \eqref{eq:singleob} for the plant \eqref{eq:plant} are located in the region ${\cal D}=\{ s\in {\cal C}_0:\, Re(s) < -\sigma \}$, and the $H_\infty$ gain from $m$ to ${e}_L$ is $\gamma_L>0$.
If there exist $\tilde{\alpha} \in \mathbb{R}$ and $P=P^\top>0$ such that 
\begin{align}
\left[\begin{array}{ccc}
\mbox{\rm He}(A \!-\! K_L C,P)  & P K_L C & - \tilde{\alpha} I_n\\
C^\top K_L^\top P & - I_n & (1 + \tilde{\alpha}) I_n \\
-\tilde{\alpha} I_n & (1+ \tilde{\alpha})I_n & - I_n
\end{array}\right]
<0,
\label{eq:LMI_smaller_H_gain}
\end{align}
then, for every $N\in\mathbb{N},\ N>1$, there exist a digraph $\Gamma$ and a gain ${\cal K}$ for $N$  interconnected observers  in \eqref{eq:graph_individual} such that the error system \eqref{eq:general_error_compact_graph} has its eigenvalues in ${\cal D}$ and the local $H_\infty$ gain from $m$ to associated $\bar{e}_i$ for all agents are less than or equal to $\gamma_L$. 
Moreover, for at least $N-1$ agents, the local $H_\infty$ gain from $m$ to associated $\bar{e}_i$ is strictly less than $\gamma_L$.
\end{theorem}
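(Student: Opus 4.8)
The plan is to exhibit, for every $N>1$, an explicit digraph and gain ${\cal K}$ built from the LMI data, and then to recognize \eqref{eq:LMI_smaller_H_gain} as a bounded-real-lemma certificate for a single auxiliary transfer function through which each coupled local error factors. I would take the \emph{star} digraph in which agent $1$ is self-connected only, ${\cal I}(1)=\{1\}$, while each agent $i\in\{2,\dots,N\}$ receives from itself and from agent $1$, ${\cal I}(i)=\{1,i\}$. I set the self-gains to the Luenberger gain, $K_{ii}=K_L$ for all $i$, the coupling gains to $K_{i1}=\tilde\alpha K_L$ for $i\geq 2$ (with $\tilde\alpha$ the scalar furnished by the LMI), and all remaining $K_{ij}=0$. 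With this choice the matrix ${\cal A}$ in \eqref{eq:matrix_graph} is block lower triangular with every diagonal block equal to $\tilde{A}_L=A-K_LC$; hence $\mbox{eig}({\cal A})=\mbox{eig}(\tilde{A}_L)\subset{\cal D}$ by hypothesis, so the eigenvalue (rate-of-convergence) requirement holds for free, uniformly in $N$.

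Next I would factor the local error. Writing $H(s):=(sI-\tilde{A}_L)^{-1}$, agent $1$ obeys $e_1=HK_Lm_1$, so $\bar e_1=e_1$ has transfer function $HK_L$ and local $H_\infty$ gain exactly $\gamma_L$ — this is the one agent permitted not to improve. For $i\geq 2$, substituting $K_{ii}=K_L$ and $K_{i1}=\tilde\alpha K_L$ into the $i$-th line of \eqref{eq:error_individual} and using $\bar e_i=\tfrac12(e_1+e_i)$, a short computation yields the key factorization
\[
\bar e_i=\tfrac12\big(W(s)\,HK_L\,m_1+HK_L\,m_i\big),\qquad W(s):=(1+\tilde\alpha)I-\tilde\alpha\,HK_LC .
\]
Thus the transfer matrix $T_i$ from $m$ to $\bar e_i$ has only two nonzero block-columns, $\tfrac12 WHK_L$ on channel $m_1$ and $\tfrac12 HK_L$ on channel $m_i$.

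The crucial observation is that \eqref{eq:LMI_smaller_H_gain} is exactly the bounded-real-lemma inequality \eqref{eq:general_Hinf} — in its general, feedthrough form (cf.\ \cite[Theorem 2.41]{90.scherer.dssertation}) — at level $\gamma=1$ for the realization $(A_1,B_1,C_1,D_1)=(\tilde{A}_L,\,K_LC,\,-\tilde\alpha I_n,\,(1+\tilde\alpha)I_n)$, whose transfer function is precisely $W(s)$. Since $\tilde{A}_L$ is Hurwitz (its spectrum lies in ${\cal D}$) and $P=P^\top>0$ solves \eqref{eq:LMI_smaller_H_gain}, the bounded real lemma gives $\|W\|_\infty<1$; note this also forces $\tilde\alpha\neq0$, so the coupling is genuine.

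To conclude, set $M(j\omega):=(HK_L)(HK_L)^*$ and recall $\sup_\omega\lambda_{\max}(M)=\gamma_L^2$ because $\gamma_L=\|HK_L\|_\infty$. From the factorization, at each frequency $T_iT_i^*=\tfrac14(WMW^*+M)$, whence $4\,\sigma_{\max}(T_i)^2=\lambda_{\max}\!\big(WMW^*+M\big)\le(1+\|W\|_\infty^2)\,\lambda_{\max}(M)$, using $\lambda_{\max}(WMW^*)\le\sigma_{\max}(W)^2\lambda_{\max}(M)$. Taking the supremum over $\omega$ and invoking $\|W\|_\infty<1$ yields $\|T_i\|_\infty^2\le\tfrac14(1+\|W\|_\infty^2)\gamma_L^2<\tfrac12\gamma_L^2$, i.e.\ $\|T_i\|_\infty<\gamma_L$ for every $i\geq2$; together with $\|T_1\|_\infty=\gamma_L$ this gives all local gains $\le\gamma_L$ with strict inequality for the $N-1$ agents $2,\dots,N$, as claimed. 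I expect the main obstacle to be discovering the correct pairing: one must realize that the coupling gain should be the scalar multiple $\tilde\alpha K_L$ of the Luenberger gain, so that the coupled local error factors through the \emph{single} auxiliary system $W$ whose $H_\infty$ gain the LMI is designed to control. Once that alignment is found, the block-triangular eigenvalue statement is immediate and the gain comparison collapses to the elementary singular-value estimate above.
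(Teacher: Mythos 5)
Your proposal is correct and follows essentially the same route as the paper's proof: the same star digraph (agent $1$ broadcasting to all others), the same gain assignment $K_{ii}=K_L$, $K_{i1}=\tilde\alpha K_L$, the same block-triangular eigenvalue argument, and the same key step of reading \eqref{eq:LMI_smaller_H_gain} as a bounded-real-lemma certificate giving $\|(1+\tilde\alpha)I-\tilde\alpha(sI-\tilde A_L)^{-1}K_LC\|_\infty<1$. The only difference is cosmetic but favorable: where the paper bounds the block-row transfer matrix by the triangle inequality to get $\|T_i\|_\infty<\gamma_L$, your eigenvalue estimate on $T_iT_i^*=\tfrac14(WMW^*+M)$ yields the sharper conclusion $\|T_i\|_\infty<\gamma_L/\sqrt{2}$ for the $N-1$ coupled agents.
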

\begin{proof}
For any $N>1$, let the digraph $\Gamma$ have adjacency matrix 
\begin{align}
G_N =\left[\begin{array}{cc} 
1 & 1_{N-1}^\top \\ 
0 &  I_{N-1}
\end{array}\right].  
\end{align}
This choice of $G$ indicates that agent $1$ can share information with all other agents. Moreover, for each $i\in {\cal V}$,
let $T_i$ be the transfer function from $m$ to $\bar{e}_i$.
Take $N= 2$ and $K_{11}= K_{22} = K_L$, $K_{12} = 0$, and $K_{21}$ to be determined later. Then, the interconnected  observers in \eqref{eq:graph_individual} reduce to
\IfJtwoC{
      \begin{align}
      \begin{split}
              \dot{\hat x}_1 &\!=\! A {\hat x}_1 - K_L({\hat y}_1 - y_1),\\
              \dot{\hat x}_2 & \!=\! A {\hat x}_2- K_L({\hat y}_2 - y_2) \!-\! K_{21}({\hat y}_1 \!-\! y_1), \\ 
              {\hat y}_1 &\!=\! C {\hat x}_1,\
              {\hat y}_2 \!=\! C {\hat x}_2 ,\quad 
              \bar{x}_1 \!=\! \hat x_1,\ \bar{x}_2 \!=\! \frac{{\hat x}_1+{\hat x}_2}{2}, 
      \end{split}\label{eq:multi-observer-reduced}
      \end{align}
}{
      \begin{align}
      \begin{split}
              \dot{\hat x}_1 &\!=\! A {\hat x}_1 -  K_L({\hat y}_1 \!-\! y_1), \quad 
              \dot{\hat x}_2  \!=\! A {\hat x}_2- K_L({\hat y}_2 - y_2) \!-\! K_{21}({\hat y}_1 \!-\! y_1), \\ 
              {\hat y}_1 &\!=\! C {\hat x}_1,\
              {\hat y}_2 \!=\! C {\hat x}_2 ,\quad 
              \bar{x}_1 \!=\! \hat x_1,\ \bar{x}_2 \!=\! \frac{{\hat x}_1+{\hat x}_2}{2}, 
      \end{split}\label{eq:multi-observer-reduced}
      \end{align} }
with associated error system as in \eqref{eq:general_error_compact_graph} with 
      \begin{flalign*}
      \begin{split}
              {\cal A}=\left[
              \begin{array}{cc}
              A-K_L C    &  0 \\
              -K_{21} C  &  A-K_L C
              \end{array}
              \right], \
              {\cal B}=\left[
              \begin{array}{cc}
               K_L     & 0  \\
               K_{21} & K_L
              \end{array}
              \right].            
      \end{split}
      \end{flalign*}
If $K_L$ is such that \eqref{eq:singleob} has its eigenvalues in ${\cal D} \!\!=\!\! \{ s \!\in\! {\cal C}_0:\, Re(s)\!<\! -\sigma \}$, then, due to the block matrix form of ${\cal A}$, the eigenvalues of ${\cal A}$ are also in ${\cal D}$.
Now, suppose \eqref{eq:LMI_smaller_H_gain} holds with $\alpha \in \mathbb{R}$ and $P=P^\top>0$. Then, if \eqref{eq:LMI_smaller_H_gain} is treated as an $H_\infty$ constraint, equivalently, we have 
\begin{align}
\left|\left| -\tilde\alpha(sI-\tilde{A}_L)^{-1}K_L C + (1+\tilde\alpha) I \right|\right|_\infty < 1.
\label{eq:H_infinity_norm_LMI_proof}
\end{align}
Therefore, the transfer function $T_2(s) \!=\! {\cal C}_2 (sI \!-\! {\cal A})^{\!-\!1} {\cal B}$ satisfies 
\IfJtwoC{
\begin{align*}
T_2 
 &= 
\frac{1}{2} \! \left[\begin{array}{cc}
\!\!\!I &\!\!\! I\!\!\!
\end{array}\right] 
\! \left[\!\! \begin{array}{cc}
sI - \tilde{A}_L \!\!& 0 \\
K_{21}C & sI-\tilde{A}_L
\end{array} \!\! \right]^{\!-\!1}
\!\! \left[ \!\!\! \begin{array}{cc}
K_L      & 0\\
K_{21} &K_L 
\end{array}\!\!\right] .
\end{align*}
}{
\begin{align*}
T_2 & =  {\cal C} (sI - {\cal A})^{-1} {\cal B} 
 = 
\frac{1}{2} \! \left[\begin{array}{cc}
\!\!\!I &\!\!\! I\!\!\!
\end{array}\right] 
\! \left[\!\! \begin{array}{cc}
sI - \tilde{A}_L \!\!& 0 \\
K_{21}C & sI-\tilde{A}_L
\end{array} \!\! \right]^{\!-\!1}
\!\! \left[ \!\!\! \begin{array}{cc}
K_L      & 0\\
K_{21} &K_L 
\end{array}\!\!\right] .
\end{align*}  }
By using the inversion identity for a block matrix (inversion lemma), it follows that 
\IfJtwoC{
\begin{align*}
 \left[\!\! \begin{array}{cc}
sI \!\!-\!\! \tilde{A}_L \!\!& 0 \\
K_{21}C & sI-\tilde{A}_L
\end{array} \!\! \right]^{\!-\!1}\!\! =\!\! 
\left[ \!\! \begin{array}{cc}
(sI \!\!-\!\! \tilde{A}_L)^{-1} & 0\\
F & (sI \!\!-\!\! \tilde{A}_L)^{-1}
\end{array} \!\! \right],
\end{align*}
}{
\begin{align*}
 \left[\!\! \begin{array}{cc}
sI - \tilde{A}_L \!\!& 0 \\
K_{21}C & sI-\tilde{A}_L
\end{array} \!\! \right]^{\!-\!1} = 
\left[ \!\! \begin{array}{cc}
(sI - \tilde{A}_L)^{-1} & 0\\
F & (sI-\tilde{A}_L)^{-1}
\end{array} \!\! \right],
\end{align*} }
where $F = - (sI - \tilde{A}_L)^{-1}K_{21}C(sI - \tilde{A}_L)^{-1}$
Then, by assigning $K_{21} = \tilde\alpha K_L$,  $T_2$ can be simplified as 
\begin{align*}
T_2  =  
 \left[\begin{array}{cc}
 \frac{1}{2} T_L  - \frac{1}{2}\tilde\alpha T_L C T_L +  \frac{1}{2}\tilde\alpha T_L & \frac{1}{2}T_L
 \end{array} \right],
\end{align*}
where $T_L(s) = (sI-\tilde{A}_L)^{-1} K_L$. Therefore, we obtain 
\begin{align*}
||T_2||_\infty & \leq \frac{1}{2} \left|\left| (1 + \tilde\alpha) T_L - \tilde\alpha T_L C T_L \right|\right|_\infty +\frac{1}{2} ||T_L||_\infty.
\end{align*}
Using \eqref{eq:H_infinity_norm_LMI_proof}, it follows that
$
||T_ 2||_\infty < ||T_L||_\infty = \gamma_L. $
Now consider for any $N >1,N\in\mathbb{N}$, with digraph whose adjacency matrix is $G_N$, it follows that the transfer function $T_{i}$ from noise $m$ to $\bar{e}_i$ satisfies $T_{i} = T_2$ for all $i \in {\cal V}, i\neq 1$. 
Therefore, $||T_{i}||_\infty < \gamma_L$ for all $i \in {\cal V}, i\neq 1$. 
\end{proof}
\IfJtwoC{\vspace{-6pt}}{}
Note that condition \eqref{eq:LMI_smaller_H_gain} is a property on the plant for a given $K_L$; basically, an $H_\infty$ inequality as in \eqref{eq:synthesis_cons_hinf_fixed}.  Next, we illustrate this condition in the examples throughout the paper.
\begin{example}
For the scalar plant \eqref{eq:scalarplant} with the Luenberger observer \eqref{eg:scalarplantob}, the transfer function in the $s$-domain from $m$ to $e_L$ is given by $T_L(s) = \frac{{K}_L}{s-a+{K}_L}$. 
{Since \eqref{eq:LMI_smaller_H_gain}} is an LMI with respect to $P$ and $\tilde\alpha$, its feasibility can be easily verified, {\it e.g.}, for $a=-0.5$ and $K_L=2$, $P=0.47$ and $\tilde\alpha = -0.5$ solve {\eqref{eq:LMI_smaller_H_gain}}. Therefore, for the plant \eqref{eq:scalarplant}, there exist interconnected observers such that at least $N-1$ local $H_\infty$ gains are smaller than $\gamma_L=0.8$ with $K_L = 2$. This justifies the improvement shown in the motivational example as in Table~\ref{tab:frequency}. 
\end{example}
\IfJtwoC{\vspace{-6pt}}{}
\begin{example}
We revisit the systems in Example~\ref{ex:second_order_plants}. For the first system discussed in Example~\ref{ex:second_order_plants}, 
the improvement is justified by the fact that condition \eqref{eq:LMI_smaller_H_gain} in Theorem~\ref{thm:smaller_Hgain_guaranteed} holds with $\tilde\alpha = -0.3241$ and $P = 0.1I$.
While for the other second-order plant with oscillatory behavior, the improvement is justified by the fact that condition \eqref{eq:LMI_smaller_H_gain} in Theorem~\ref{thm:smaller_Hgain_guaranteed} holds with $\tilde\alpha = 0.8631$ and $P = [w_1\ w_2]$ with $w_1 = [0.1839\ 0.0117]^\top$ and $w_2 = [0.0117\ 0.1722]^\top$. 
\end{example}
\IfJtwoC{While it may be possible to get further improvement by designing the gains of the interconnected observers as in the design of Kalman filters, it should be noted that the tradeoff between performance and robustness affects general Kalman filters; {\color{black} see \cite[Section \uppercase\expandafter{\romannumeral4}.C]{Li.Sanfelice.13.TR.Interconnected}} for a discussion on this.}{}
\NotForJtwoC{
\section{Discussion}
\label{sec:discussion}     
\subsection{Optimizing the number of internal observers $N$}
\label{sec:optimization_on_number_observers}
The parameters of the interconnected observers are the 
gains $K_{ij}$'s and the number of internal observers $N$.
The optimization problems formulated in Section~\ref{sec:Design} optimize the
gains for a prespecified value of $N$.
Clearly, the larger the value of $N$ is, the larger the size 
of the optimization problem to solve becomes.  
Moreover, it is expected that performance and robustness may only be improved
up to a certain value of $N$, and increasing it further would not lead to a significant
improvement.
For a fixed rate of convergence constraint, Example~\ref{ex:number_internal_obs} shows the relationship between the optimal $H_\infty$ gain from $m$ to $\bar{e}$ as a function of $N$ for the scalar plant \eqref{eq:scalarplant} with the interconnected observer via an all-to-all connectivity. Figure~\ref{fig:saturation_global} suggests that, when $N$ is larger than four, the improvement on the $H_\infty$ gain is not significant as its value tends to settle around a constant ($\approx 0.3$).
The objective function in the optimization problem in \eqref{eq:synthesis_cons_hinf_fixed} can be modified to include the number of internal observers $N$ as an optimization variable. The objective function is given by $c_1 \gamma_g + c_2 N$, where $c_1>0, c_2\geq 0$ are constant weights. With this new objective function, an optimization problem that optimizes the number of observers can be written as
\begin{align}
\begin{split}
& \inf c_1 \gamma_g + c_2 N \\
\mbox{s.t.:\ } & \mbox{\rm He}({\cal A},P_{S}) + 2\sigma P_{S} < 0,\\
& \left[\begin{array}{cccc}
     \mbox{\rm He}({\cal A},P_{S}) &    P_{H} {\cal B} & {\cal X}\\
     {\cal B}^\top P_{H} & -\gamma_g I & 0\\
     {\cal X}^\top & 0 & -\gamma_g I
    \end{array}\right] <0,\\
& P_S =P_S^\top >0, P_H= P_H^\top >0, N \in {\cal N},    
\label{eq:optimization_multi_observers_ob_n}
\end{split}
\end{align}
where ${\cal X} = {\cal C}$ (or ${\cal C}_i$), ${\cal N}\subset \mathbb{N}$ and ${\cal N}$ is bounded.
It is worth to note that when $c_2=0$, the optimization reduces to the original one in \eqref{eq:synthesis_cons}. 
A way to solve problem \eqref{eq:optimization_multi_observers_ob_n}, perhaps not efficiently, is by generating a collection of problems with $N$ taking values from the finite set ${\cal N}$ and then determining the one(s) with smallest $H_\infty$ gain.
\subsection{Consensus of the estimates in the nominal case}
\label{subsec:consensus}
The results in the previous section enable the design of interconnected observers as in \eqref{eq:graph_individual} that meet specifications involving the rate of convergence, $H_\infty$ gains, and connectivity graphs. The local estimate could further be employed to determine a global estimate over the connectivity graph.  Such an estimate can be obtained using consensus algorithms, in which case it will consists of a consensus problem of time-varying signals.
When measurement noise is zero, the algorithm in \cite{13.Cortes.Consensus} can already be employed when generalized to the case of vector inputs. To this end, we attach to each agent an agreement vector $\xi_i$ and employ the following distributed algorithm to guarantee that each $\xi_i$ asymptotically approaches the average of the local estimates, namely, $\frac{1}{N}\sum_{j=1}^N \hat x_j(t)$: 
\IfJtwoC{
\begin{align}
\begin{split}
\dot \xi_i^k & = - \beta_1 (\xi_i^k \!-\! \hat x_i^k) \!-\! \beta_2 \sum_{j=1}^N \!\! \ell_{ij} \xi_j^k \!-\! v_i^k \!+\! \dot{\hat x}_i^k,\\
\dot v_i^k   &= \beta_1 \beta_2 \sum_{j=1}^N \ell_{ij} \xi_i^k,
\end{split}
\label{eq:consensus_algorithm}
\end{align} 
}{ 
\begin{align}
\dot \xi_i^k & = - \beta_1 (\xi_i^k - \hat x_i^k) - \beta_2 \sum_{j=1}^N \ell_{ij} \xi_j^k - v_i^k + \dot{\hat x}_i^k,\qquad
\dot v_i^k   = \beta_1 \beta_2 \sum_{j=1}^N \ell_{ij} \xi_i^k,
\label{eq:consensus_algorithm}
\end{align}  }
for $i\in{\cal V}$, $1\leq k\leq n$, where $\xi_i =(\xi_i^1,\dots,\xi_i^k,\dots,\xi_i^n)$; $\hat x_i$'s are the estimates generated by agent $i$ using the local observer in \eqref{eq:graph_individual}, $v_i$ is the auxiliary variable, and $\ell_{ij}$'s are elements of the Laplacian $\cal{L}$ associated with the digraph $\Gamma$. The constants $\beta_1, \beta_2 \in \mathbb{R}$ are parameters to be determined.

To analyze the convergence and stability of algorithm \eqref{eq:consensus_algorithm}, following \cite{13.Cortes.Consensus}, it is rewritten as 
\IfJtwoC{
\begin{align}
\begin{split}
\dot \delta & = -\beta_1 \delta - \beta_2 ({\cal L}\otimes I_n) \delta - w,\\
\dot w  & = \beta_1 \beta_2 ({\cal L}\otimes I_n) \delta - \Pi_{nN}(\ddot{\hat x} + \beta_1 \dot{\hat x}), 
\end{split}
\label{eq:consensus_algorithm_equiv}
\end{align}
}{
\begin{align}
\dot \delta & = -\beta_1 \delta - \beta_2 ({\cal L}\otimes I_n) \delta - w,\qquad
\dot w   = \beta_1 \beta_2 ({\cal L}\otimes I_n) \delta - \Pi_{nN}(\ddot{\hat x} + \beta_1 \dot{\hat x}), \label{eq:consensus_algorithm_equiv}
\end{align} }
where $\delta = (\delta_1,\delta_2,\dots, \delta_N)$,  
$
\delta_i = \xi_i -\frac{1}{N} \sum_{j=1}^{N} \hat{x}_j, i\in{\cal V}$, and 
$w  = v - \Pi_{nN}(\dot{\hat x} + \beta_1 \hat x)$.  
Following \cite[Lemma 4.3]{13.Cortes.Consensus}, we obtain the following property.
\begin{lemma}
\label{lem:consensus}
For the plant in \eqref{eq:plant}, assume the digraph $\Gamma$ is strongly connected and weight balanced, where $\hat x_i$ has the dynamics given in \eqref{eq:graph_individual} with $m_i\equiv 0$. Moreover, assume there exists ${\cal K}$ in \eqref{eq:matrix_graph} such that ${\cal A}$ is Hurwitz.
Then, for any $x(0), \hat{x}_i(0), \xi_i(0)\in \mathbb{R}^n$, $\beta_1 > 0$, $\beta_2 > 0$, and $v_i(0)\in \mathbb{R}^n$ such that $\sum_{i=1}^N v_i(0) = 0$, we have $ \lim_{t\to \infty}\left( \xi_i(t) -\frac{1}{N}\sum_{j=1}^N \hat x_j(t) \right) =0$ for all $i\in {\cal V}$. 
\end{lemma}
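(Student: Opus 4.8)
The plan is to read \eqref{eq:consensus_algorithm_equiv} as a linear time-invariant system in the coordinates $(\delta, w)$ that is driven by the exogenous signal $\Pi_{nN}(\ddot{\hat x} + \beta_1\dot{\hat x})$, and to note that the desired conclusion $\xi_i(t) - \frac{1}{N}\sum_{j}\hat x_j(t)\to 0$ is precisely $\delta(t)\to 0$, since $\delta_i = \xi_i - \frac{1}{N}\sum_j \hat x_j$. Following \cite[Lemma 4.3]{13.Cortes.Consensus}, it then suffices to (i) show that this forcing signal converges to zero and (ii) verify that the unforced dynamics drive $\delta$ to the origin under the stated structural and initial-condition hypotheses.

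For step (i) I would exploit the nominal ($m\equiv 0$) error dynamics. Stacking the estimates as $\hat x = e + 1_N\otimes x$, the error obeys $\dot e = {\cal A}e$ by \eqref{eq:general_error_compact_graph} with $m=0$, while $\dot x = Ax$. Since the common plant-state component $1_N\otimes x$ and its derivatives $1_N\otimes Ax$ and $1_N\otimes A^2 x$ lie in the agreement subspace $\ker\Pi_{nN}$ (with $\Pi_{nN}=\Pi_N\otimes I_n$), the projection annihilates them, leaving $\Pi_{nN}\dot{\hat x} = \Pi_{nN}{\cal A}e$ and $\Pi_{nN}\ddot{\hat x} = \Pi_{nN}{\cal A}^2 e$. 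Because ${\cal A}$ is Hurwitz, $e(t)=\exp({\cal A}t)e(0)\to 0$ exponentially, so the entire forcing term decays exponentially to zero.

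For step (ii) I would decompose $(\delta, w)$ along the agent-index agreement direction $1_N$ and its orthogonal complement. Weight balance gives $1_N^\top{\cal L}=0$ and $1_N^\top\Pi_N=0$, so the agreement component $(1_N^\top\otimes I_n)w$ is invariant; the hypothesis $\sum_i v_i(0)=0$ together with $w=v-\Pi_{nN}(\dot{\hat x}+\beta_1\hat x)$ forces $(1_N^\top\otimes I_n)w(0)=0$, hence this component vanishes identically. The agreement component of $\delta$ then satisfies the decoupled stable equation $\frac{d}{dt}(1_N^\top\otimes I_n)\delta=-\beta_1(1_N^\top\otimes I_n)\delta$ and decays since $\beta_1>0$. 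On the disagreement subspace, strong connectivity makes every nonzero eigenvalue $\lambda$ of ${\cal L}$ have positive real part; the corresponding planar block of the $(\delta,w)$ generator has eigenvalues $-\beta_1$ and $-\beta_2\lambda$ and is therefore Hurwitz, so the disagreement subsystem is exponentially stable. A standard argument for an exponentially stable linear system driven by a vanishing input then yields $\delta(t)\to 0$ using the decay established in step (i).

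The main obstacle is step (i): one must recognize that although each $\hat x_i(t)$ need not settle to a constant (it tracks the possibly persistent trajectory $x(t)$), only the \emph{disagreement} among the $\hat x_i$ enters \eqref{eq:consensus_algorithm_equiv} through $\Pi_{nN}$, and this disagreement is exactly the error $e$, which vanishes by the Hurwitz hypothesis on ${\cal A}$. Once the forcing is shown to decay, the remainder is a direct application of the dynamic average consensus analysis of \cite[Lemma 4.3]{13.Cortes.Consensus}, with the marginally stable agreement mode neutralized by the initialization $\sum_i v_i(0)=0$.
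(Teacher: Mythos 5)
Your proof is correct, but it takes a genuinely different route from the paper's. The paper's own proof is essentially a two-step reduction: it stacks the $k$-th coordinates of all agents into vectors $\delta^k, w^k\in\mathbb{R}^N$, notes that for each fixed $k\in\{1,\dots,n\}$ the resulting dynamics have exactly the scalar-signal structure of the dynamic average consensus algorithm, and then invokes \cite[Lemma 4.3]{13.Cortes.Consensus} coordinate-wise, passing the hypotheses (strong connectivity, weight balance, $\beta_1,\beta_2>0$, $\sum_{i}v_i(0)=0$) to that external result as a black box. You instead remain at the vector level of \eqref{eq:consensus_algorithm_equiv} and re-derive the substance of the cited lemma: invariance and vanishing of the agreement component of $w$ (via $1_N^\top {\cal L}=0$, $1_N^\top\Pi_N=0$, and the initialization), exponential decay of the agreement component of $\delta$, Hurwitzness of the disagreement blocks with eigenvalues $-\beta_1$ and $-\beta_2\lambda$ where $\mathrm{Re}(\lambda)>0$, and a vanishing-input argument to conclude $\delta(t)\to 0$. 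The genuinely valuable addition in your version is step (i): the forcing $\Pi_{nN}(\ddot{\hat x}+\beta_1\dot{\hat x})$ is not obviously decaying, because each $\hat x_i$ tracks the persistent trajectory $x(t)$; your observation that $\Pi_{nN}=\Pi_N\otimes I_n$ annihilates the common component $1_N\otimes x$, leaving only $\Pi_{nN}{\cal A}e$ and $\Pi_{nN}{\cal A}^2 e$, which decay because ${\cal A}$ is Hurwitz, is exactly where the lemma's Hurwitz hypothesis enters --- a point the paper's citation-based proof leaves implicit inside the hypotheses of the cited result. What each approach buys: the paper's proof is short and defers the consensus analysis to the literature (its coordinate-wise splitting is needed only because the cited lemma is stated for scalar signals), while yours is self-contained, avoids that reduction, and makes the role of every hypothesis of the lemma visible. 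One small technical remark: since ${\cal L}$ need not be diagonalizable, your per-eigenvalue ``planar block'' reasoning is cleanest via the characteristic polynomial of the unforced $(\delta^k,w^k)$ generator, which factors as $(s+\beta_1)^{N}\det\bigl(sI_{N}+\beta_2{\cal L}\bigr)$ because its blocks commute; this yields the same spectrum $\{-\beta_1\}\cup\{-\beta_2\lambda:\lambda\in\mbox{eig}({\cal L})\}$ without assuming a full eigenbasis, and the conclusion is unchanged.
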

\begin{proof}
The proof can be found in \IfJournal{\cite[Appendix  C]{Li.Sanfelice.13.TR.Interconnected}}{{Appendix~\ref{proof_lem_consensus}}}.
\end{proof}
When the measurement noise $m$ is not zero, due to the linear dynamics, we conjecture that the algorithm in \eqref{eq:consensus_algorithm} has an ISS like property with respect to $m$, similar to the ${\cal KL}$ bound in \eqref{neq:KLbounds_graph}. 
Along with Theorem~\ref{thm:smaller_Hgain_guaranteed},
such a property could potentially be used to 
characterize the improvement of the global $H_\infty$ guaranteed by the interconnected observers. 

\subsection{Comparison between interconnected observers and the optimal observer/Kalman-Bucy filter}
\label{subsec:comparison_obs}
It is well known that the Kalman-Bucy filter is the optimal observer that minimizes the mean square estimation error \cite{Kalman1961}.
For the plant \eqref{eq:plant} with $x\in \mathbb{R}^n$, it is given by
\begin{align}
\dot {\hat x}_K = A {\hat x}_K - K(t) (\hat y_K - y),
\label{eq:kalman_bucy_filter}
\end{align}
where $\hat x_K\in \mathbb{R}^n$ and $t\mapsto K(t)$ is the time-varying gain.
Defining the estimation error as $e_K:= {\hat x}_K - x$, the error system for the optimal observer/Kalman-Bucy filter is\footnote{If \eqref{eq:kalman_bucy_filter} is initialized with $\hat x_K(0)= \bar{x}_0:= \mathbb{E}\{ x(0) \}$, where $\mathbb{E}$
{denotes the expected value function},
then, according to
\cite[Theorem~$4.5$]{72tradeoff},
for any {positive definite symmetric weighting matrix function} $t\mapsto W(t)$, the gain that minimizes the objective function $\mathbb{E}\{ e_K(t)^\top W(t) e_K(t) \}$ for all $t\geq 0$ is $K(t) = Q(t) C^\top V_d^{-1}(t)$, where $t \mapsto Q(t)$ is the solution of
\begin{align}
\dot Q(t) = AQ(t) +Q(t) A^\top - Q(t)C^\top V_d^{-1}(t)CQ(t),
\end{align}
from the initial condition $Q(0):= \mathbb{E}\{ (x(0)-\bar{x}_0)(x(0)-\bar{x}_0)^\top \}$,
where
{$t \mapsto V_d(t)$}
is the {(positive definite for each $t\geq0$)} covariance {matrix} of the measurement noise $m$.}
\begin{align}
\dot e_K = (A-K(t)C)e_K + K(t)m.
\end{align}
While the expected value of the (weighted) norm of $e_K$ is minimized, the same trade off pointed out in Section~\ref{sec:introduction} for Luenberger observers plays a key role in the design of \eqref{eq:kalman_bucy_filter}. In fact, as  \cite[page 346]{72tradeoff} correctly points out, ``The optimal observer provides a compromise between the speed of state reconstruction and the immunity to observation noise.''
Moreover, the design of \eqref{eq:kalman_bucy_filter} does not permit incorporating other performance indexes, such as the rate of convergence. On the other hand, the interconnected observers proposed here exploit the connections among agents over a graph to relax the constraints imposed by the said trade off and permits the incorporation of multiple objectives in the design.
In fact,
the results in this paper show that \eqref{eq:kalman_bucy_filter} is not the optimal observer when performance specifications formulated in terms of eigenvalue constraints (relative to the optimal observer) are added. In this way, our results yield observers living in dimensions that are larger than that of the plant, leading to an approach in which the state estimation of systems in $\mathbb{R}^n$ is performed using algorithms in $\mathbb{R}^{nN}$. }
\IfJtwoC{\vspace{-9pt}}{}
\section{Conclusion}  
In contrast to standard observers for linear time-invariant systems,
interconnected observers have the
capability of attaining fast rate of convergence rate without necessarily jeopardizing robustness to measurement noise in the $H_\infty$ sense.
The comparison between ${\cal KL}$ bounds between interconnected and Luenberger observers 
leads to checkable conditions that can be used for design -- though potentially conservative.
When solved for specific systems, 
the stated feasibility and optimization problems lead to significant improvements,
when compared to single Luenberger observers.
Such improvement is guaranteed by the satisfaction of an LMI condition.
While the optimization of the number of internal observers and the connectivity graph are 
not necessarily linear and convex, numerical results for a particular plant indicate that 
the improvement obtained in robustness is significant only up to a finite number of such internal observers.
\IfJtwoC{\vspace{-9pt}}{}
\balance
\bibliography{AME549ProjReport,long,Biblio,RGS}
\bibliographystyle{IEEEtran}
\IfJournal{}{

\appendix
\section{Proof of Proposition~\ref{prop:issbound}}
\label{app:proof_prop_KL_bounds}
For a Hurwitz matrix ${\cal A}$ with distinct eigenvalues, we have the following properties \cite{03.book.Corless.Brazho}:
          (P2.1) $|\exp({\cal A}t)| \leq \kappa({\cal A})\exp({\alpha({\cal A})}t),\ \forall t \geq 0 $;
          (P2.2) Let $\Phi(t) = \exp({\cal A}t) $ for all $t\in \mathbb{R}_{\geq 0}$, then $|| \Phi ||_1 \leq \frac{ \kappa({\cal A})}{|\alpha({\cal A})|}$.
Then, the solution of system \eqref{eq:general_error_compact_graph}, given by
$       e(t) = \exp({\cal A} t) e(0) + \int_{0}^t \exp({\cal A}(t-\tau)){\cal B} m(\tau) d \tau$,
can be bounded for all $t\geq 0$ as
\IfJournal{
      \begin{flalign}
      \begin{split}
           |e(t)| & \!\leq\!  \left|\exp({\cal A} t)\right|\! |e(0)| \!\!+\!\!
            \left| \int_{0}^t \!\!\!\exp({\cal A}(t\!-\!\tau)) {\cal B} m(\tau) d \tau \!\right|\!. \\
      \end{split}\label{neq:bound_multiobs_1}
      \end{flalign}
}{
      \begin{flalign}
      \begin{split}
           |e(t)| & \leq  \left|\exp({\cal A} t)\right| \, |e(0)| +
            \left| \int_{0}^t \exp({\cal A}(t-\tau)) {\cal B} m(\tau) d \tau \right|. \\
      \end{split}\label{neq:bound_multiobs_1}
      \end{flalign} }
Let $\left|\phi(t)\right| = \left|\int_{0}^t \exp({\cal A}(t-\tau)) {\cal B} m(\tau) d \tau \right|$, then
\IfJournal{
      \begin{flalign}
      \begin{split}
         |\phi(t)|  &\!\leq\! | {\cal B} | \!\!\int_{0}^t  \left| \exp({\cal A}(t-\tau)) \right| d \tau \, |m|_\infty \\
                    &\!\leq\! | {\cal B} | \!\!\int_{0}^t  \left| \exp({\cal A}(\tau)) \right| d\tau \! |m|_\infty 
                     \!\leq\! | {\cal B} | \! || \Phi ||_1 \! |m|_\infty.
      \end{split}
      \end{flalign}
}{
      \begin{flalign}
      \begin{split}
         |\phi(t)|  &\leq | {\cal B} | \int_{0}^t  \left| \exp({\cal A}(t-\tau)) \right| d \tau \, |m|_\infty 
                         \leq | {\cal B} | \int_{0}^t  \left| \exp({\cal A}(\tau)) \right| d\tau \, |m|_\infty 
                         \leq | {\cal B} |\, || \Phi ||_1 \, |m|_\infty.
      \end{split}
      \end{flalign} }
Therefore,
by using properties P2.1 and P2.2, inequality \eqref{neq:bound_multiobs_1} can be simplified as
\IfJtwoC{
      \begin{flalign}
      \begin{split}
           |e(t)| & \!\leq\! \kappa({\cal A}) \exp({\alpha}({\cal A}) t) \! |e(0)|  \!+\!  \kappa({\cal A}) \frac{ |{\cal B}| }{| {\alpha}({\cal A}) |}|m|_\infty.
       \end{split}\label{neq:bound_multiobs_2}
       \end{flalign}
}{
      \begin{flalign}
      \begin{split}
           |e(t)| & \leq \kappa({\cal A}) \exp({\alpha}({\cal A}) t) \, |e(0)|  +  \kappa({\cal A}) \frac{ |{\cal B}| }{| {\alpha}({\cal A}) |}|m|_\infty.
       \end{split}\label{neq:bound_multiobs_2}
       \end{flalign} }
Furthermore, 
$           |{\bar e}(t)| = |{\cal C} e(t)| \leq |{\cal C}| |e(t)| 
            \leq \kappa({\cal A}) |{\cal C}| \exp({\alpha}({\cal A}) t) \, |e(0)|  +  \kappa({\cal A}) \frac{ |{\cal B}| |{\cal C}|}{| {\alpha}({\cal A}) |}|m|_\infty. $   
Pick for $s,\ t\in \mathbb{R}_{\geq 0}$,
$         \beta (s, t) = \kappa({\cal A}) |{\cal C}| \exp( {\alpha}({\cal A}) t) s,\
         \varphi (s)=  \kappa({\cal A}) \frac{|{\cal B}||{\cal C}|}{| {\alpha} ({\cal A}) |}s. $
It follows that \eqref{neq:KLbounds_graph} holds.  

When ${\cal A}$ is dissipative such that ${\cal A}^\top + {\cal A} \leq - 2 \overline \alpha I$ for some $\overline \alpha > 0$, following \cite[Section 3.2]{03.book.Corless.Brazho}, we have 
          (P2.3) ${\cal A} + {\cal A}^\top \leq 2\mu({\cal A}) I$;
          (P2.4) $|\exp({\cal A}t)| \leq \exp(\mu ({\cal A})t)$ for all $t \geq 0 $;
          (P2.5) Let $\Phi(t) = \exp({\cal A}t) $ for all $t \geq 0$. Then, $|| \Phi ||_1 \leq \frac{1}{|\mu ({\cal A})|}$.
Using properties P2.4 and P2.5, inequality \eqref{neq:bound_multiobs_1} can be simplified as
$           |e(t)|  \leq \exp(\mu({\cal A}) t) \, |e(0)|  +   \frac{ |{\cal B}| }{| \mu({\cal A}) |}|m|_\infty. $
Then,  if follows that 
$          |{\bar e}(t)| = |{\cal C} e(t)| \leq |{\cal C}| |e(t)| 
           \leq |{\cal C}|\exp(\mu({\cal A}) t) \, |e(0)|  +   \frac{ |{\cal B}| |{\cal C}|}{| \mu({\cal A}) |}|m|_\infty.$
For each $s\in \mathbb{R}_{\geq 0}$ and $t\in \mathbb{R}_{\geq 0}$, define
$         \beta (s, t)=  |{\cal C}| \exp( \mu({\cal A}) t) s,\ 
         \varphi(s)= \frac{|{\cal B}| |{\cal C}| }{| \mu ({\cal A}) |}s. $
It follows that \eqref{neq:KLbounds_graph} holds.    

When there exists $P=P^\top>0$ such that ${\cal A}^\top P + P {\cal A} \leq - 2 \overline\alpha P$ for some $\overline\alpha > 0$,   
consider the Lyapunov function $V(e) = e^\top P e$. Then, $V$ has the following properties:
(P2.6) $\lambda_{\min}(P)|e|^2 \leq V(e) \leq \lambda_{\max}(P)|e|^2$;
(P2.7) $\langle \nabla V (e), {\cal A}e \rangle \leq -2 {\overline\alpha} \lambda_{\min} (P) |e|^2$;
(P2.8) $|\nabla V (e)| \leq  2\lambda_{\max}(P) |e| $.
Moreover, the derivative of the function $V(e) = e^\top P e$ with respect to time is, for each $e\in\mathbb{R}^n$,
\IfJtwoC{
      \begin{flalign}
      \begin{split}
            \dot V (e) & = \langle \nabla V (e), \dot {e} \rangle \\
                       & = \langle \nabla V (e), {\cal A}e + {\cal B}m \rangle \\
                      & = \langle \nabla V (e), {\cal A}e \rangle + \langle \nabla V (e), {\cal B}m \rangle.
      \end{split}\label{eqn:timederivative_Lya}
      \end{flalign}
}{
      \begin{flalign}
      \begin{split}
            \dot V (e) & = \langle \nabla V (e), \dot {e} \rangle 
                        = \langle \nabla V (e), {\cal A}e + {\cal B}m \rangle 
                       = \langle \nabla V (e), {\cal A}e \rangle + \langle \nabla V (e), {\cal B}m \rangle.
      \end{split}\label{eqn:timederivative_Lya}
      \end{flalign} }
Using properties {P2.7} and {P2.8} as well as the Cauchy-Schwarz inequality, we get that for each solution $t\mapsto e(t)$ to  \eqref{eq:general_error_compact_graph} and each $t\mapsto m(t)$
\IfJtwoC{
      \begin{flalign}
      \begin{split}
            \dot V(e(t)) & \leq -2 {\overline\alpha} \lambda_{\min} (P) |e(t)|^2\\
             & \quad + 2\lambda_{\max}(P) |e(t)| |{\cal B}| |m(t)|
      \end{split}\label{eq:bound_lyapunov_intermidiate}
      \end{flalign}
}{
      \begin{flalign}
      \begin{split}
            \dot V(e(t)) & \leq -2 {\overline\alpha} \lambda_{\min} (P) |e(t)|^2 + 
             2\lambda_{\max}(P) |e(t)| |{\cal B}| |m(t)|
      \end{split}\label{eq:bound_lyapunov_intermidiate}
      \end{flalign} }
      for all $t \geq 0$.
To claim the desired bound on $e(t)$ from \eqref{eq:bound_lyapunov_intermidiate}, using similar steps as those in the proof of \cite[Theorem $5.1$]{Khalil.2002.nonlinearsystems}, we define $W(t) = \sqrt{V(e(t))}$. 
It can be shown that,  for all values of $V(e(t))$,
      \begin{flalign*}
      \begin{split}
            D^+ W(t) \leq - \frac{ {\overline\alpha} \lambda_{\min} (P)}{\lambda_{\max}(P)} W(t) + \frac{\lambda_{\max}(P)|{\cal B}|}{ \sqrt{\lambda_{\min}(P)}}|m(t)|.
      \end{split}
      \end{flalign*}
Then, by a comparison lemma (see, {\it e.g.}, \cite[Lemma 3.4]{Khalil.2002.nonlinearsystems}), for all $t\geq 0$, $W(t)$ satisfies the inequality
\IfJtwoC{
      \begin{flalign*}
      \begin{split}
             W(t) \leq & \exp(- \lambda t) W(0)\\ 
             & + \frac{\lambda_{\max}(P)|{\cal B}|}{ \sqrt{\lambda_{\min}(P)}} \!\!\int_0^t \!\! \exp(-\lambda(t-\tau))|m(\tau)| d\tau,
      \end{split}
      \end{flalign*}
}{
      \begin{flalign*}
      \begin{split}
             W(t) \leq & \exp(- \lambda t) W(0) + 
              \frac{\lambda_{\max}(P)|{\cal B}|}{ \sqrt{\lambda_{\min}(P)}}\int_0^t \exp(-\lambda(t-\tau))|m(\tau)| d\tau,
      \end{split}
      \end{flalign*} }
where $\lambda = \frac{{\overline\alpha} \lambda_{\min} (P)}{\lambda_{\max}(P)}$ . Using property {P2.6} and $W(t) = \sqrt{V(e(t))}$, it follows that for all $t > 0$,
\IfJtwoC{
      \begin{flalign}
      \begin{split}
             |e(t)| & \!\leq\!\! \sqrt{\!\frac{\lambda_{\max}(\!P\!)}{\lambda_{\min}(\!P\!)}}|e(0)|\!\exp( \!-\! \lambda t)  \!+\! 
                     \frac{\lambda_{\max}(\!P\!)|{\cal B}|}{ \lambda_{\min}(\!P\!) |\lambda|} |m|_\infty.
      \end{split}\label{neq:bounds_multiob_prop3}
      \end{flalign}
}{
      \begin{flalign}
      \begin{split}
             |e(t)| 
                     & \leq \sqrt{\frac{\lambda_{\max}(P)}{\lambda_{\min}(P)}}|e(0)|\exp(- \lambda t)  + 
                     \frac{\lambda_{\max}(P)|{\cal B}|}{ \lambda_{\min}(P) |\lambda|} |m|_\infty.
      \end{split}\label{neq:bounds_multiob_prop3}
      \end{flalign} }
Then, it follows that 
\IfJtwoC{
      \begin{flalign}
      \begin{split}
           & |{\bar e}(t)| = |{\cal C} e(t)| \leq |{\cal C}| |e(t)| \\
                &     \!\!\!\leq\! \sqrt{\!\frac{\lambda_{\max}(\!P\!)}{\lambda_{\min}(\!P\!)}}|{\cal C}||e(0)|\exp(\!-\! \lambda t)  \!+\! 
                     \frac{\lambda_{\max}(P)|{\cal B}| \! |{\cal C}|}{ \lambda_{\min}(P) |\lambda|} |m|_\infty.
      \end{split}\label{neq:bounds_multiob_prop3_bare}
      \end{flalign} 
}{
      \begin{flalign}
      \begin{split}
            |{\bar e}(t)| = |{\cal C} e(t)| \leq |{\cal C}| |e(t)| 
                     & \leq \sqrt{\frac{\lambda_{\max}(P)}{\lambda_{\min}(P)}}|{\cal C}||e(0)|\exp(- \lambda t)  + 
                     \frac{\lambda_{\max}(P)|{\cal B}||{\cal C}|}{ \lambda_{\min}(P) |\lambda|} |m|_\infty.
      \end{split}\label{neq:bounds_multiob_prop3_bare}
      \end{flalign}      }
For every $s,\ t\in \mathbb{R}_{\geq 0}$, define
$         \beta (s, t)= \sqrt{\frac{\lambda_{\max}(P)}{\lambda_{\min}(P)}}|{\cal C}|\exp(- \lambda t) s, 
         \varphi(s) =  \frac{\lambda_{\max}(P)|{\cal B}||{\cal C}|}{ \lambda_{\min}(P) |\lambda|} s.$
Then, \eqref{neq:KLbounds_graph} holds.

\section{Proof of Proposition~\ref{prop:example_Hinf}}
\label{app:proof_prop_exmaple_Hinf}
Let $K_{11} = K_{22} = K_L$. Define the function $\rho_1: [0,\infty) \to [0,\infty)$ as $\rho_1(x) = 4||T(j\omega)||^2$, where $x=\omega^2$ for all $\omega\in\mathbb{R}$. Then, 
\begin{align}
\begin{split}
\rho_1(x)&= \frac{(2K_L+K_{12}+K_{21})^2\omega^2 + [2K_L(K_L-a)-a(K_{12} + K_{21})-2K_{12}K_{21}]^2}{\omega^4+[2(a-K_L)^2+2 K_{12}K_{21}]\omega^2 + [(a-K_L)^2 - K_{12}K_{21}]^2}
:= \frac{\tilde{a}x+e}{bx^2+cx+d}, 
\label{eq:functionrho}
\end{split}
\end{align}
where $\tilde{a} = (2K_L+\alpha)^2$, $b = 1$, $c=2(a-K_L)^2+2\beta$, $d=[(a-K_L)^2-\beta]^2$, $e=[2K_L(K_L-a)-a\alpha-2\beta]^2$, where $\alpha = K_{12}+K_{21}$ and $\beta = K_{12}K_{21}$. 
It can be proved that the function $\omega \mapsto \rho_1(x(\omega))$ is pseudo-concave on $[0,\infty)$ for $(K_{12},K_{21})\in{\cal D}_1$,  where the nonempty set ${\cal D}_1 = \{(K_{12},K_{21})\in\mathbb{R}^2: (\alpha,\beta)\in{\cal S}_1\bigcap{\cal S}_2\}$ with 
${\cal S}_1 = \{(\alpha,\beta)\in \mathbb{R}^2: ec-\tilde{a} d >  0\}$
and ${\cal S}_2 = \{(\alpha,\beta)\in\mathbb{R}^2:\beta=0\}$;  see Lemma~\ref{lemma_pseudoconvex} (for convenience, in the definition of ${\cal D}_1$, we write the condition on $K_{12}$ and $K_{21}$ in terms of $\alpha$ and $\beta$). Moreover, $\rho_1$ is an even function on $\mathbb{R}$, with maximum attained at each $\omega$ such that $\nabla\rho_1(\omega^2)2\omega=0$. Therefore, by properties of extrema of pseudo-concave functions (see, e.g., \cite[page106]{Bazaraa2006}), its maximum is at $\omega = 0$ or at $\omega$ such that 
\begin{align}
\begin{split}
\nabla\rho_1(\omega^2) &= \frac{-\tilde{a}\omega^4-2e\omega^2+\tilde{a}d-ec}{(\omega^4+c\omega^2+d)^2} =0. \label{derivative_omega}
\end{split}
\end{align}
Since, using the assumption $a-K_L<0$, $c$ and $d$ are positive on ${\cal D}_1$, equation \eqref{derivative_omega} is equivalent to $\tilde{a}\omega^4+2e\omega^2-\tilde{a}d+ec=0$, which for $\alpha > -2K_L$ has roots at 
\begin{align}
\begin{split}
\omega_{1,2}^\star &=  \pm \sqrt{\frac{-e+\sqrt{e^2-\tilde{a}(ec-\tilde{a}d)}}{\tilde{a}}}, \quad
\omega_{3,4}^\star = \pm \sqrt{\frac{-e-\sqrt{e^2-\tilde{a}(ec-\tilde{a}d)}}{\tilde{a}}}. 
\label{eq:derivative_omega_roots}
\end{split}
\end{align}
Recall that $\tilde{a} >0$ and that on the set ${\cal D}_1$ we have $ec - \tilde{a}d >0$, which, since $d > 0$ due to $a - K_L <0$, implies $e > 0$. Therefore, the roots in \eqref{eq:derivative_omega_roots} are complex conjugate. Then, for $(K_{12},K_{21})\in{\cal D}_1 \cap \{(K_{12},K_{21})\in\mathbb{R}^2: \alpha>-2K_L\}$, $\rho_1$ attains maximum at $\omega=0$, and so does $\omega\mapsto ||T(j\omega)||^2$, $i.e.$, $||T||_\infty^2 = ||T(0)||^2 =  \frac{1}{4}  \frac{e}{d}$. 

Now, we show the existence of parameters $K_{11}, K_{22}, K_{12}, K_{21}$ for which the property $||T||_\infty < ||T_0||_\infty$ holds. We claim that it holds for $K_{11} = K_{22} = K_L$, $(K_{12},K_{21})\in {\cal D}:={\cal D}_1 \cap {\cal D}_2$, where ${\cal D}_2:=  \{(K_{12}, K_{21})\in\mathbb{R}^2: 0>K_{12}> \max\{-2K_L, \frac{4K_L(K_L-a)}{a}\}, K_{21} = 0 \} $. 
Take $(K_{12},K_{21})\in {\cal D}$. 
We have that $|| T(j\omega) ||^2$ is given by $\frac{1}{4}$ of the right-hand side of \eqref{eq:functionrho} with 
$\tilde{a} = (2K_L + K_{12})^2$, $b = 1$, $c=2(a-K_L)^2$, $d=(a-K_L)^4$, $e=[2 K_L (K_L-a)-a K_{12}]^2$. 
Then, 
\begin{align}
||T||_\infty^2 = \frac{1}{4}  \frac{e}{d} = \frac{1}{4}\frac{[2 K_L( K_L-a)-a K_{12}]^2}{(a- K_L)^4}.
\end{align} 
Furthermore\footnote{Since $T_L(s)= \frac{K_L}{s-a+K_L}$ and $||T_L(j\omega)||=\frac{K_L}{\sqrt{\omega^2+(K_L-a)^2}}$, it follows that 
$||T_L||_\infty=\sup_{\omega\in\mathbb{R}}||T_L(j\omega)||=\frac{K_L}{K_L-a}$.}, the inequality $||T||_\infty^2 < ||T_L||_\infty^2$ leads to 
\begin{align}
\frac{1}{4}\frac{[2 K_L (K_L-a)-a K_{12} ]^2}{(a-K_L)^4} < \frac{K_L^2}{(a-K_L)^2},
\label{eq:normsquareneq}
\end{align} 
which holds on ${\cal D}$. 

On the other hand, since $K_{11}=K_{22} =K_L$, when  $(K_{12},K_{21})\in {\cal D}$, the rate of convergence of the observer in \eqref{eq:coupled_obs} is $|a - K_L|$ by substituting these parameters in \eqref{eq:coupled_obs_full}, which coincides with that of the Luenberger observer in \eqref{eg:scalarplantob}.

Note that the case where $a>0$ and $a-K_L<0$ can be proved similarly. 

\begin{lemma}
\label{lemma_pseudoconvex}
For system \eqref{eq:coupled_obs} with $K_1=K_2=K_L$ such that $a-K_L<0$, whose transfer function from $m$ to $\bar{e}$ is $T(s) = \tilde{C}(sI - \tilde{A})^{-1} \tilde{B}$, for each $(K_{12},K_{21})\in{\cal D}_1:= \{(K_{12},K_{21})\in\mathbb{R}^2: [2a^2(a-K_L)^2-(a-K_L)^4](K_{12} + K_{21})^2+[8a K_L(a-K_L)^3-4 K_L(a-K_L)^4](K_{12}+K_{21})+4 K_L^2(a-K_L)^4 >  0, K_{12}K_{21}=0 \}$, the function $\omega\mapsto||T(j\omega)||^2$ is pseudo-concave on $[0,\infty)$.  
\end{lemma}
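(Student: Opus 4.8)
The plan is to reduce the pseudo-concavity claim to the sign analysis of a single quadratic. Since on $\mathcal D_1$ we have $\beta = K_{12}K_{21} = 0$, the coefficients in \eqref{eq:functionrho} simplify to $b=1$, $c = 2(a-K_L)^2 > 0$, and $d = (a-K_L)^4 > 0$, and $\|T(j\omega)\|^2 = \tfrac14 \rho_1(\omega^2)$ with $\rho_1(x) = (\tilde a x + e)/(x^2 + cx + d)$. Because pseudo-concavity is invariant under multiplication by the positive constant $\tfrac14$, it suffices to treat $\omega \mapsto \rho_1(\omega^2)$. First I would note that the denominator $x^2 + cx + d$ is strictly positive on $[0,\infty)$ (all its coefficients are positive), so $\rho_1$ is differentiable there and $\rho_1'(x) = N(x)/(x^2+cx+d)^2$, where $N(x) := -\tilde a x^2 - 2ex + (\tilde a d - ec)$ is exactly the numerator already isolated in \eqref{derivative_omega}. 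Hence the sign of $\rho_1'$ on $[0,\infty)$ equals the sign of $N$.

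Next I would establish that $N(x) < 0$ for every $x \ge 0$ on $\mathcal D_1$. Substituting $\beta = 0$ into $ec - \tilde a d$ and expanding, one checks that $ec - \tilde a d$ equals the displayed quadratic in $\alpha = K_{12}+K_{21}$ appearing in the definition of $\mathcal D_1$; thus the defining inequality of $\mathcal D_1$ is precisely $ec - \tilde a d > 0$, so $N(0) = \tilde a d - ec < 0$. Moreover $ec > \tilde a d \ge 0$ together with $c > 0$ forces $e > 0$, whence $N'(x) = -2\tilde a x - 2e \le -2e < 0$ for all $x \ge 0$. Therefore $N$ is strictly decreasing on $[0,\infty)$ and, starting strictly below zero at $x = 0$, remains negative throughout. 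Consequently $\rho_1' < 0$ on $[0,\infty)$, i.e. $\rho_1$ is strictly decreasing there.

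Then I would transfer this to the $\omega$ variable. By the chain rule, $\frac{d}{d\omega}\|T(j\omega)\|^2 = \tfrac{\omega}{2}\,\rho_1'(\omega^2)$, which vanishes at $\omega = 0$ and is strictly negative for $\omega > 0$. Hence $f(\omega) := \|T(j\omega)\|^2$ is strictly decreasing on $(0,\infty)$ with its only stationary point at the left endpoint $\omega = 0$. Finally I would verify pseudo-concavity directly from the definition: if $f'(\omega_1)(\omega_2 - \omega_1) \le 0$, then either $\omega_1 = 0$, in which case the hypothesis is vacuous and $f(\omega_2) \le f(0)$ by monotonicity, or $\omega_1 > 0$ with $f'(\omega_1) < 0$, which forces $\omega_2 \ge \omega_1$ and hence $f(\omega_2) \le f(\omega_1)$ again by monotonicity. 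In both cases $f(\omega_2) \le f(\omega_1)$, which is exactly the condition for $f$ to be pseudo-concave on $[0,\infty)$.

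The computations are elementary; the main bookkeeping obstacle is confirming that the explicit quadratic defining $\mathcal D_1$ coincides with $ec - \tilde a d > 0$ after setting $\beta = 0$, and handling the degenerate subcase $\tilde a = 0$ (that is $\alpha = -2K_L$), in which $N$ reduces to the affine function $-2ex - ec$ that is still strictly negative on $[0,\infty)$ since $e,c > 0$, so the conclusion is unchanged. The only genuinely conceptual point is the behavior at the boundary $\omega = 0$: there the derivative vanishes, yet because this stationary point is a global maximum (an endpoint) rather than an interior critical point, the pseudo-concavity inequality is preserved.
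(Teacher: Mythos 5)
Your proof is correct, and its mechanism differs from the paper's in a useful way. Both arguments hinge on the same key fact --- with $\beta:=K_{12}K_{21}=0$, the quadratic in $\alpha:=K_{12}+K_{21}$ appearing in the definition of ${\cal D}_1$ equals $ec-\tilde a d$, so ${\cal D}_1$ is precisely where $ec-\tilde a d>0$ --- and both isolate the derivative numerator $N(x)=-\tilde a x^{2}-2ex+\tilde a d-ec$. From there, however, the paper checks the two-point definition of pseudo-convexity of $\rho=-\rho_1$: it uses the sign of $N(x_1)$ to conclude $x_2\ge x_1$, and then proves $\rho(x_2)\ge\rho(x_1)$ by explicitly factoring the difference $\rho(x_2)-\rho(x_1)$, which forces it to introduce an auxiliary discriminant set $\tilde{\cal S}_2=\{-c^{2}/4+d>0\}$ to control the denominators. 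You instead prove strict monotonicity once and for all: $N(0)=\tilde a d-ec<0$ on ${\cal D}_1$, and $e>0$ (forced by $ec>\tilde a d\ge 0$ and $c>0$) gives $N'(x)=-2\tilde a x-2e<0$, hence $N<0$ and $\rho_1'<0$ on $[0,\infty)$; pseudo-concavity then follows from monotonicity together with the observation that the only stationary point of $\omega\mapsto\rho_1(\omega^{2})$ is the boundary maximum at $\omega=0$. Your route buys two concrete things: it dispenses with the factorization identity and with $\tilde{\cal S}_2$ altogether --- a genuine improvement, since at $\beta=0$ one has $-c^{2}/4+d=0$ exactly, so the paper's claim that $\beta=0$ places one inside $\tilde{\cal S}_2$ is not literally true, and positivity of the denominator on $[0,\infty)$ must anyway be argued the way you argue it (positive coefficients, vertex at $-c/2<0$); and it covers the degenerate case $\tilde a=0$ (i.e., $\alpha=-2K_L$) uniformly. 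What the paper's two-point computation buys in exchange is an explicit identity valid for general $\beta$, which is what lets it carve out the sets ${\cal S}_1$ and $\tilde{\cal S}_2$ before specializing to $\beta=0$. The one genuinely delicate point in your argument --- the stationary point at the endpoint $\omega=0$, which is a boundary maximum and therefore does not violate the pseudo-concavity implication --- is identified and handled correctly.
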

\begin{proof}
With the definition of $\tilde{A},\tilde{B},\tilde{C}$ and $K_1=K_2=K_L$, the transfer function $T$ can be computed as 
\begin{align*}
T(s) &= \tilde{C}(sI - \tilde{A})^{-1} \tilde{B}\\
       &= \frac{1}{2} \frac{(2K_L+K_{12}+K_{21})s+2 K_L (K_L-a) - a(K_{12} + K_{21}) - 2K_{12} K_{21}}{s^2-2(a - K_L)s+(a - K_L)^2 -K_{12} K_{21}}.
\end{align*}
Let $\alpha = K_{12} + K_{21}$ and $\beta = K_{12} K_{21}$, then we have 
\begin{align*}
T(s) = \frac{1}{2} \frac{(2 K_L+\alpha)s + 2 K_L (K_L-a)-a\alpha-2\beta}{s^2-2(a-K_L)s+(a-K_L)^2-\beta}.
\end{align*}
It follows that 
\begin{align*}
||T(j\omega)||^2= \frac{1}{4} \frac{(2 K_L+\alpha)^2\omega^2+[2 K_L(K_L-a)-a\alpha-2\beta]^2}{\omega^4+[2(a-K_L)^2+2\beta]\omega^2+[(a-K_L)^2-\beta]^2}.
\end{align*}
Note that pseudo-concavity of $\omega \mapsto ||T(j\omega)||^2$ is equivalent to pseudo-convexity of $\omega \mapsto -||T(j\omega)||^2$. Define the function $\rho: [0,\infty) \mapsto (-\infty,0]$ as $\rho(x) = -4||T(j\omega)||^2$, where $x=\omega^2$ for all $\omega\in\mathbb{R}_{\geq0}$. Note that since $\omega \mapsto ||T(j\omega)||^2$ is an even function on $\mathbb{R}$, it is enough to discuss the case where $\omega \geq 0$. 
Furthermore, let $\tilde{a} = (2K_L+\alpha)^2$, $b = 1$, $c=2(a-K_L)^2+2\beta$, $d=[(a-K_L)^2-\beta]^2$, $e=[2K_L(K_L-a)-a\alpha-2\beta]^2$, which will be treated as functions of $\alpha$ and $\beta$, but for simplicity of notation, we do not explicitly write that dependency. Then, 
\begin{align*}
\rho(x)= - \frac{\tilde{a}x+e}{bx^2+cx+d}.
\end{align*}
For any two points $x_1,x_2\in\mathbb{R}_{\geq 0}$, $\nabla \rho(x_1)(x_2-x_1)\geq 0 $ can be written as 
\begin{align*}
-\frac{-\tilde{a}x_1^2-2ex_1+\tilde{a}d-ec}{(bx_1^2+cx_1+d)^2}(x_2-x_1)\geq 0, 
\end{align*}
which implies that
\begin{align}
(\tilde{a}x_1^2+2ex_1+ ec-\tilde{a}d)(x_2-x_1)\geq 0.  \label{assumption_pconvex}
\end{align}
Since $\tilde{a} \geq 0$ and $e \geq 0$, then for each $(\alpha,\beta) \in {\cal S}_1:=\{(\alpha,\beta)\in\mathbb{R}^2: ec - \tilde{a}d > 0\}$, \eqref{assumption_pconvex} implies that $x_2 \geq x_1$. Now, $\rho(x_2)-\rho(x_1)$ can be evaluated as 
\begin{flalign}
\rho(x_2)-\rho(x_1) &= -\frac{\tilde{a}x_2+e}{x_2^2+cx_2+d}+\frac{\tilde{a}x_1+e}{x_1^2+cx_1+d}\\
                               & =\frac{\tilde{a}x_1x_2+e(x_1+x_2)+ce-\tilde{a}d}{(x_2^2+cx_2+d)(x_1^2+cx_1+d)}(x_2-x_1)
\end{flalign}
Recall that the minimum of quadratic function $x_1^2+cx_1+d$ on $\mathbb{R}$ is attained at the point $x_1 =-\frac{c}{2}$ and that the actual value of the function is $-\frac{c^2}{4}+d$. Therefore, for $(\alpha,\beta)\in \tilde{\cal S}_2:=\{(\alpha,\beta)\in\mathbb{R}^2: -\frac{c^2}{4}+d > 0\}$, $(x_2^2+cx_2+d)(x_1^2+cx_1+d) > 0$. Moreover, if $(\alpha,\beta)\in {\cal S}_1$, using the property $x_1\geq 0$, $\tilde{a}x_1x_2+e(x_1+x_2)+ce-\tilde{a}d>0$ since it is lower bounded by $\tilde{a}x_1^2+ex_1+ce-\tilde{a}d$. Then, for any $(\alpha,\beta)\in {\cal S}_1\bigcap \tilde{\cal S}_2$, $\nabla \rho(x_1)(x_2-x_1)\geq 0$ implies $\rho(x_2)-\rho(x_1) \geq 0$. Therefore, by definition of pseudo-convexity, the function $\rho$ is pseudo-convex on $[0,\infty)$ for each $(\alpha,\beta)\in {\cal S}_1\bigcap \tilde{\cal S}_2$. 

To show that the set ${\cal S}_1\bigcap \tilde{\cal S}_2$ is nonempty, consider the special case where $\beta = 0$. Using the definitions of $c$ and $d$, the set $\tilde{\cal S}_2$ leads to the smaller set ${\cal S}_2: = \{(\alpha,\beta)\in\mathbb{R}^2:\beta=0\}$. By using the definitions of $\tilde{a}, c,d,e$, $(\alpha,\beta)\in S_1\bigcap S_2$ implies that $\alpha$ should satisfy 
$[2a^2(a-K_L)^2-(a-K_L)^4]\alpha^2+[8a K_L(a-K_L)^3-4 K_L(a-K_L)^4]\alpha+4 K_L^2(a-K_L)^4 > 0$. This condition can always be satisfied for some $\alpha$ since $4 K_L^2(a-K_L)^4 \geq 0$. Thus, ${\cal S}_1 \bigcap {\cal S}_2$ is nonempty, which implies that $S_1\bigcap \tilde{S}_2$ is nonempty. Note that ${\cal D}_1= \{(K_{12},K_{21}): (\alpha,\beta)\in S_1 \bigcap S_2 \}$. Then, for each $(K_{12},K_{21})\in {\cal D}_1$,  $-\rho$ is pseudo-concave on $[0,\infty)$. Moreover, it can be easily verified that the composition $\rho(x(\omega))$  is pseudo-convex on the set $\{\omega\in\mathbb{R}: \omega \geq 0\}$, where $x(\omega) = \omega^2$. In fact, since 
\begin{align}
\frac{d}{d\omega}\rho(x(\omega)) = 2 \nabla \rho(x)\omega,
\end{align}
and for $\omega_1,\omega_2\in \mathbb{R}_{\geq 0}$,  $\rho(x(\omega_2))-\rho(x(\omega_1))=\rho(x_2)-\rho(x_1)$ with $x_i = \omega_i^2$ for $i\in\{1,2\}$, by similar arguments as above, we have that
\begin{align}
\frac{d}{d\omega}\rho(x(\omega_1))(\omega_2-\omega_1)\geq0 
\end{align}
implies $\rho(x(\omega_2))-\rho(x(\omega_1))\geq0$. Thus, for each $(K_{12},K_{21})\in {\cal D}_1$, $\omega \mapsto \rho(x(\omega))$ is pseudo-convex on $[0,\infty)$; hence, for each $(K_{12},K_{21})\in {\cal D}_1$,
 $\omega \mapsto ||T(j\omega)||^2$ is pseudo-concave on $[0,\infty)$.
\end{proof}

\section{Proof of Lemma~\ref{lem:consensus}}
\label{proof_lem_consensus}
Consider the $k$-th element of $\hat x_i$ as $\hat x_i^k$ with the algorithm in \eqref{eq:consensus_algorithm}. Let $\delta^k = (\delta_1^k, \delta_2^k,\dots,\delta_N^k)$, $w^k = (w_1^k, w_2^k,\dots,w_N^k)$, $v^k = (v_1^k, v_2^k,\dots,v_N^k)$ and $\hat x^k = (\hat x_1^k, \hat x_2^k,\dots,\hat x_N^k)$. We can rewrite \eqref{eq:consensus_algorithm} as
\begin{align}
\dot \delta^k & = -\beta_1 \delta^k - \beta_2 {\cal L} \delta^k - w^k,\\
\dot w^k  & = \beta_1 \beta_2 {\cal L} \delta^k - \Pi_{N}(\ddot{\hat x}^k + \beta_1 \dot{\hat x}^k),
\end{align}
where 
\begin{align}
\delta_i^k &= \xi_i^k -\frac{1}{N} \sum_{j=1}^{N} \hat{x}_j^k, i\in{\cal V},\quad 
w^k  = v^k - \Pi_{N}(\dot{\hat x}^k + \beta_1 \hat x^k).
\end{align}
Then, we obtain 
\begin{align}
\left[\begin{array}{l}
\dot \delta^k\\
\dot w
\end{array}\right] = A_k 
\left[\begin{array}{l}
\delta^k\\
w
\end{array}\right] - 
\left[\begin{array}{l}
0\\
\Pi_{N}(\ddot{\hat x}^k + \beta_1 \dot{\hat x}^k)
\end{array}\right], 
\end{align}
where 
\begin{align}
A_k = 
\left[\begin{array}{cc}
-\beta_1 I_N - \beta_2 {\cal L} & -I_N \\
\beta_1\beta_2 {\cal L} & 0
\end{array}\right].
\end{align}
Applying \cite[Lemma 4.3]{13.Cortes.Consensus}, we have $\lim_{t\to \infty}\left( \xi_i^k(t) -\frac{1}{N}\sum_{j=1}^N \hat x_j^k(t) \right) =0$ for all $i\in {\cal V}$. Furthermore, this is true for all $k\in\{1,2,\dots, n\}$. Therefore, the claim in Lemma is proved.
\section{On two uncoupled Luenberger observers}
\label{app:argumenton2uncoupledobs}
It should be noted that simply using two Luenberger observers without any coupling and taking the average of their estimates will not lead to both faster convergence rate and smaller steady state error. 
In fact, when $K_{12} = K_{21} =0$, the 
system in \eqref{eq:coupled_obs} proposes a structure of two Luenberger observers without coupling.
A direct calculation shows that for constant noise, the estimation error $\bar e$ satisfies
\IfConf{${\bar e}^\star = \frac{1}{2}\left(\frac{k_1}{k_1-a}+\frac{k_2}{k_2-a}\right)m.$}
\NotForConf{$${\bar e}^\star = \frac{1}{2}\left(\frac{K_1}{K_1-a}+\frac{K_2}{K_2-a}\right)m.$$}
Suppose the gain for the Luenberger observer in \eqref{eq:singleob_error} is $K_L\geq 0$ and $a-K_L<0$ for stability, where $a-K_L$ denotes the rate of convergence of the Luenberger observer. To guarantee stability and that the rate of convergence of the proposed observer is no worse than that of a  Luenberger observer, it is necessary to have $K_1\geq K_L$ and $K_2\geq K_L$. It can be easily verified that in such a case
     \begin{flalign}
     \begin{split}
         \frac{1}{2}\left(\frac{K_1}{K_1-a}+\frac{K_2}{K_2-a}\right)- \frac{K_L}{K_L-a} \geq 0.
     \end{split}
     \end{flalign}
Thus, $\left| {\bar e}^\star \right|\geq \left| e_L^\star \right|$ as $\frac{1}{2}\left(\frac{K_1}{K_1-a}+\frac{K_2}{K_2-a}\right)>0$ and $\frac{K_L}{K_L-a}>0$.

\section{Bound of $H_\infty$ gain as an inequality constraint}
To guarantee that the $H_\infty$ from $m$ to $\bar{e}$ is as small as possible, the bound of the transfer function $T$ in the $s$-domain should be minimized, namely, we look for the minimum $\gamma> 0$ such that $|T(j\omega)| < \gamma$ for all $\omega \in \mathbb{R}$, namely, we minimize the ${\cal L}_2$ gain.  \footnote{Such a bound guarantees that
$\int_0^\infty|z_\infty(t)|^2 dt  < \gamma^2 \int_0^\infty|m(t)|^2 dt$,
and $\gamma$ is the  ${\cal L}_2$ gain, where $m \in {\cal L}_2$, the so-called $H_\infty$ gain \cite{Tamer.1995.Hoptimal.book}.  }

\begin{lemma}
\label{Hinf_lemma}
\cite{anderson.1973.network}\cite[Theorem 2.41]{90.scherer.dssertation} 
For the transfer function \eqref{eq:tf_closedsys} defined  by $({\cal A},\ {\cal B},\ {\cal C},\ {\cal D})$, the following statements are equivalent.
\begin{enumerate}
\item [a)] The system is stable and the $H_\infty$ gain of the  system is less than $\gamma$ for some $\gamma>0$, \itshape{i.e.}, $||T||_\infty<\gamma$,
\item [b)] There exists $P_H =P_H^\top > 0$ such that
      \begin{align}
              \left[
              \begin{array}{ccc}
               \mbox{\rm He}({\cal A},P_H)     & P_H {\cal B} & {\cal C}^\top \\
               {\cal B}^\top P_H   & -\gamma I & {\cal D}^\top \\
               {\cal C}            & {\cal D}  & -\gamma I
              \end{array}
              \right]<0. 
              \label{eq:bounded_real_lemma_Hinfty}
      \end{align}
\end{enumerate}
\end{lemma}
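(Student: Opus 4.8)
This is the (strict) Bounded Real Lemma, so the plan is to prove the two implications separately: a storage-function (dissipativity) argument for the sufficiency direction $b)\Rightarrow a)$, and the Kalman--Yakubovich--Popov (KYP) lemma / Riccati theory for the necessity direction $a)\Rightarrow b)$.

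For $b)\Rightarrow a)$, I would first take a Schur complement of the LMI \eqref{eq:bounded_real_lemma_Hinfty} with respect to its $(3,3)$ block $-\gamma I<0$. Since $(-\gamma I)^{-1}=-\tfrac{1}{\gamma}I$, the full LMI is equivalent to the $2\times 2$ block inequality
\[
\begin{bmatrix} \He({\cal A},P_H) & P_H{\cal B} \\ {\cal B}^\top P_H & -\gamma I \end{bmatrix}
+ \frac{1}{\gamma}\begin{bmatrix} {\cal C}^\top \\ {\cal D}^\top \end{bmatrix}
\begin{bmatrix} {\cal C} & {\cal D} \end{bmatrix} < 0 .
\]
Pre- and post-multiplying by $[x^\top\ m^\top]^\top$, and noting that along the realization \eqref{eq:tf_closedsys}, i.e.\ $\dot x={\cal A}x+{\cal B}m$ with output $y={\cal C}x+{\cal D}m$, the storage function $V(x)=x^\top P_H x$ has derivative $\dot V(x)=x^\top\He({\cal A},P_H)x+2x^\top P_H{\cal B}m$, this inequality reads $\dot V(x)<\gamma|m|^2-\tfrac{1}{\gamma}|y|^2$ for all $(x,m)\neq 0$. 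Setting $m\equiv 0$ gives $\dot V<0$ for $x\neq 0$, so $V$ is a strict Lyapunov function and ${\cal A}$ is Hurwitz; integrating the dissipation inequality from $t=0$ to $\infty$ with $x(0)=0$ and using $V\geq 0$ yields $\int_0^\infty|y|^2\,dt<\gamma^2\int_0^\infty|m|^2\,dt$, which is precisely $\|T\|_\infty<\gamma$.

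For $a)\Rightarrow b)$, I would translate the $H_\infty$ bound into the frequency-domain inequality $\gamma^2 I-T(j\omega)^\ast T(j\omega)>0$ for all $\omega\in\mathbb{R}\cup\{\infty\}$ (the value at $\infty$ being $\gamma^2 I-{\cal D}^\top{\cal D}>0$), and then invoke the KYP lemma: for a stabilizable realization, such a strict frequency-domain inequality holds if and only if there is a symmetric $P$ satisfying the LMI \eqref{eq:bounded_real_lemma_Hinfty}. Equivalently, one constructs $P$ as the stabilizing solution of the algebraic Riccati equation obtained from the same Schur complement, whose existence is guaranteed because $\|T\|_\infty<\gamma$ is equivalent to the associated Hamiltonian matrix having no eigenvalues on the imaginary axis; positive definiteness of $P$ then follows from the Hurwitz property of ${\cal A}$ together with the observability structure of the Riccati data.

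The main obstacle is this necessity direction $a)\Rightarrow b)$: producing a genuine certificate $P=P^\top>0$ from the mere frequency-domain bound requires the nontrivial machinery of spectral factorization (existence of a stabilizing Riccati solution), whereas the sufficiency direction is an essentially mechanical Schur-complement-plus-integration computation. Since the statement is quoted from \cite{anderson.1973.network} and \cite[Theorem 2.41]{90.scherer.dssertation}, I would ultimately cite the KYP/Riccati result for that direction rather than re-deriving the Hamiltonian spectral theory in full.
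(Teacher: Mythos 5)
The paper gives no proof of this lemma at all: it is imported as the (strict) Bounded Real Lemma, with the citations to \cite{anderson.1973.network} and \cite[Theorem 2.41]{90.scherer.dssertation} standing in for the argument (the remark following the lemma says as much). Your proposal therefore does strictly more than the paper, and what you do is correct. The direction $b)\Rightarrow a)$ is the standard dissipativity argument, and your Schur complement with respect to the $(3,3)$ block, the identification of $\dot V$ for $V(x)=x^\top P_H x$ along $\dot x={\cal A}x+{\cal B}m$, and the two conclusions (Hurwitz ${\cal A}$ from $m\equiv 0$; the gain bound from integration with $x(0)=0$) are all sound. One small refinement: integrating the pointwise strict inequality $\dot V<\gamma|m|^2-\tfrac{1}{\gamma}|y|^2$ only gives a strict inequality for each fixed nonzero $m$, which by itself yields $\|T\|_\infty\leq\gamma$ rather than $\|T\|_\infty<\gamma$; to get strictness one should either note that the strict LMI still holds with $\gamma$ replaced by some $\gamma'<\gamma$ (by continuity), or extract an $\varepsilon>0$ so that $\dot V\leq(\gamma-\varepsilon)|m|^2-\tfrac{1}{\gamma}|y|^2-\varepsilon|x|^2$, giving a uniform margin. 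For $a)\Rightarrow b)$ you correctly identify the genuinely nontrivial content --- passing from the frequency-domain bound to a positive definite certificate requires the KYP lemma or the existence of a stabilizing Riccati solution via the Hamiltonian spectral condition --- and deferring that direction to the cited references is exactly the position the paper itself takes, so nothing is lost relative to the source.
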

\begin{remark}
The condition in item $b)$ of Lemma~\ref{Hinf_lemma} is the so-called Bounded Real Lemma condition; see, {\it e.g.}, \cite{anderson.1973.network, 90.scherer.dssertation}. 
\end{remark}

\section{Dilated LMI formulation for the interconnected observers}
\label{app:dilatedLMI}
\begin{proposition}
\label{prop:alt_LMI_equivalenceprop}
Given $\sigma\geq 0$, the rate of convergence of error system \eqref{eq:synthesis_model} is greater than or equal to $\sigma$ and the $H_\infty$ gain from $m$ to $\bar{e}$ is less than or equal to $\gamma$ if there exist real matrices $P_S$, $P_H$, $Q_D$, $Q_H$ and real numbers $r_D>0$, $r_H>0$ such that the following optimization problem (LMI) is feasible:
\begin{subequations}
\begin{align}
& \inf \gamma \nonumber \\
\mbox{s.t.\ } & \left[\begin{array}{cc}
     \mbox{\rm He}({\cal A}, Q_D) + 2 \sigma P_S & P_S - Q_D^\top + r_D {\cal A}^\top Q_D\\
     P_S - Q_D + r_D Q_D^\top {\cal A}  & -r_D(Q_D + Q_D^\top)\\
    \end{array}\right] < 0, \label{neq:eigen_nobs}\\
& \left[\begin{array}{cccc}
     \mbox{\rm He}({\cal A}, Q_H) &  P_S - Q_H^\top + r_H {\cal A}^\top Q_H &  Q_H^\top {\cal B}  & {\cal C}^\top \\
     P_S - Q_H + r_H Q_H^\top {\cal A}  & - r_H(Q_H+Q_H^\top) & r_H Q_H^\top {\cal B} & 0\\
     {\cal B}^\top Q_H & r_H {\cal B}^\top Q_H & -\gamma I & 0\\
     {\cal C}  & 0 & 0 & -\gamma I
    \end{array}\right] <0, \label{eq:synthesis_multi_equivalent_Hinfty} \\
& P_S =P_S^\top >0, P_H = P_H^\top >0.  
\end{align}
\label{eq:synthesis_multi_equivalent}
\end{subequations}
\end{proposition}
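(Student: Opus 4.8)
The plan is to establish this as the sufficiency direction of a dilated-LMI certificate in the style of \cite{04.Ebihara.DilatedLMI,05.Ebihara.dilatedLMI}: the extra matrix variables $Q_D,Q_H$ and the positive scalars $r_D,r_H$ enter \eqref{neq:eigen_nobs}--\eqref{eq:synthesis_multi_equivalent_Hinfty} only so that, after a fixed congruence transformation, every term containing them cancels and the non-dilated certificates of Theorem~\ref{thm:optimization_Hinf_fixedstructure} reappear. Since $M<0$ implies $T^\top M T<0$ whenever $T$ has full column rank, it is enough, for each of the two dilated inequalities, to exhibit a full-column-rank $T$ for which $T^\top(\cdot)T$ collapses exactly to the corresponding standard condition; no feasibility-recovery (converse) is required for the stated ``if''.

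First I would dispose of the rate-of-convergence inequality \eqref{neq:eigen_nobs} using the full-column-rank matrix $T_1=\left[\begin{smallmatrix} I\\{\cal A}\end{smallmatrix}\right]$. A direct expansion of $T_1^\top(\cdot)T_1$ shows that the $(2,2)$ block $-r_D(Q_D+Q_D^\top)$, together with the $Q_D$-linear and $r_D$-weighted (quadratic-in-${\cal A}$) contributions coming from the off-diagonal block $P_S-Q_D^\top+r_D{\cal A}^\top Q_D$ and from $\mbox{\rm He}({\cal A},Q_D)$, cancel identically, leaving only $\mbox{\rm He}({\cal A},P_S)+2\sigma P_S$. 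Thus \eqref{neq:eigen_nobs} implies \eqref{cons_region}, and the pole-region characterization used there (following \cite{Chilali.1996.HwithPole}) places every eigenvalue of ${\cal A}$ strictly to the left of $-\sigma$; hence the rate of convergence of \eqref{eq:synthesis_model} is at least $\sigma$.

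Next the $H_\infty$ inequality \eqref{eq:synthesis_multi_equivalent_Hinfty} is handled by the same mechanism with
\[
T_2=\begin{bmatrix} I & 0 & 0\\ {\cal A} & {\cal B} & 0\\ 0 & I & 0\\ 0 & 0 & I\end{bmatrix},
\]
whose three column groups have the sizes $nN$, $\dim m$, $\dim\bar e$ of the bounded-real-lemma blocks. Carrying out $T_2^\top(\cdot)T_2$, the rows and columns attached to the ``dilation'' block $-r_H(Q_H+Q_H^\top)$ — precisely those carrying $Q_H$ and $r_H$ — are eliminated, and the surviving $3\times 3$ array is
\[
\begin{bmatrix} \mbox{\rm He}({\cal A},P_H) & P_H{\cal B} & {\cal C}^\top\\ {\cal B}^\top P_H & -\gamma I & 0\\ {\cal C} & 0 & -\gamma I\end{bmatrix}<0,
\]
which is exactly \eqref{eq:general_Hinf} with $A_1={\cal A}$, $B_1={\cal B}$, $C_1={\cal C}$ (here I read the symmetric matrix in the $(1,2)$ block of \eqref{eq:synthesis_multi_equivalent_Hinfty} as the $H_\infty$ certificate $P_H$, kept independent of $P_S$). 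By the Bounded Real Lemma (Lemma~\ref{Hinf_lemma}) this certifies $\|T\|_\infty<\gamma$ for $T(s)={\cal C}(sI-{\cal A})^{-1}{\cal B}$; taking ${\cal X}={\cal C}_i$ in place of ${\cal C}$ yields the local $H_\infty$ claim verbatim.

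I expect the main obstacle to be purely the cancellation bookkeeping: one must verify, entry by entry, that the $Q_D$- and $Q_H$-linear terms, the $r_D,r_H$-weighted terms, and the quadratic-in-${\cal A}$ terms all annihilate under the two congruences, and that $r_D,r_H>0$ are exactly what makes the $(2,2)$ blocks supply the compensating quadratics — the conceptual content is slight, but a stray sign derails it. It is also worth noting what the dilation buys: the slack variables decouple ${\cal A}$ from the Lyapunov matrices, so that $P_S$ (certifying the pole region) and $P_H$ (certifying the $H_\infty$ bound) need not coincide, removing the conservatism of the common-$P$ reformulation in Theorem~\ref{tim:common_P_linearized}. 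Finally, only sufficiency is asserted; the reverse implication would additionally require exhibiting feasible slacks (e.g.\ $Q_D=P_S$, $Q_H=P_H$ with $r_D,r_H$ small), which the statement does not demand.
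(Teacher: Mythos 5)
Your proof is correct — I checked both congruences and the cancellations work exactly as you claim: with $T_1=\left[\begin{smallmatrix}I\\ {\cal A}\end{smallmatrix}\right]$ one gets $T_1^\top(\cdot)T_1=\mbox{\rm He}({\cal A},P_S)+2\sigma P_S$, and with your $T_2$ the dilated $H_\infty$ inequality collapses to the Bounded Real Lemma condition \eqref{eq:general_Hinf} — but your route differs from the paper's in that the paper does not prove the proposition at all: its proof is a one-line citation to Theorems 1 and 2 of \cite{08.Xie.dilatedLMI} (see also \cite{04.Ebihara.DilatedLMI,05.Ebihara.dilatedLMI}). What you have done is unfold that citation into a self-contained elimination argument, which is essentially how those cited theorems are themselves established. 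Your approach buys transparency and shows that only the (easy) sufficiency direction is needed for the stated ``if''; the citation buys, in addition, the converse (recoverability of feasible slacks, e.g.\ $Q=P$ with small $r$), which is what justifies the paper's claim elsewhere that the dilated form is an \emph{equivalent}, non-conservative replacement for the common-$P$ formulation of Theorem~\ref{tim:common_P_linearized}. Two remarks on details you handled or glossed: (i) you are right to read the $P_S$ appearing in \eqref{eq:synthesis_multi_equivalent_Hinfty} as $P_H$ — as printed, $P_H$ occurs nowhere except in its positivity constraint, which is clearly a typo, and note that either reading supports a valid sufficiency proof (with the literal $P_S$, that matrix simply certifies both the pole region and the $H_\infty$ bound); (ii) your closing comment that $r_D,r_H>0$ is ``exactly what makes'' the quadratic terms cancel is not quite right — the cancellation in both congruences is an identity in $r_D$ and $r_H$ and holds for any real values; positivity of the scalars plays a role only in the converse direction, which the proposition does not assert. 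Neither point affects correctness.
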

\begin{proof}
The proof follows from \cite[Theorem $1$ and Theorem $2$]{08.Xie.dilatedLMI}, see also \cite{04.Ebihara.DilatedLMI,05.Ebihara.dilatedLMI}.
\end{proof}
}

\IfJtwoC{
\begin{IEEEbiography}
[{\includegraphics[width=1.in,keepaspectratio]{Yuchun2015biopic1.eps}}]
{Yuchun Li} 
received his B.S. and M.S. degree in Mechanical Engineering from Zhejiang University, Hangzhou, China, in 2007 and 2010, respectively. Currently, he is pursuing a Ph.D. in the Hybrid Systems Laboratory in the Department of Computer Engineering at the University of California, Santa Cruz. 
His research interests include modeling, stability, observer design, control and robustness analysis of hybrid systems. 
\end{IEEEbiography}
\begin{IEEEbiography}
[{\includegraphics[width=1.in,keepaspectratio]{Sanfelice5x7a2012smallIEEE.eps}}]
{Ricardo G. Sanfelice} 
is an Associate Professor of Computer Engineering, University of California at Santa Cruz, CA, USA. He received his M.S. and Ph.D. degrees in 2004 and 2007, respectively, from the University of California, Santa Barbara. Prof. Sanfelice is the recipient of the 2013 SIAM Control and Systems Theory Prize, the National Science Foundation CAREER award, the Air Force Young Investigator Research Award, and the 2010 IEEE Control Systems Magazine Outstanding Paper Award. His research interests are in modeling, stability, robust control, observer design, and simulation of nonlinear and hybrid systems with applications to power systems, aerospace, and biology.
\end{IEEEbiography}
}

\end{document}